\theoremstyle{plain}
\newtheorem{thm}{Theorem}[section]
\newtheorem{lem}[thm]{Lemma}
\newtheorem{cor}[thm]{Corollary}
\newtheorem{prop}[thm]{Proposition}
\theoremstyle{definition}
\newtheorem{rem}[thm]{Remark}
\newtheorem{defn}[thm]{Definition}
\numberwithin{equation}{section} 
\newcommand{\R}{\mathbb{R}}
\newcommand{\C}{\mathbb{C}}
\newcommand{\N}{\mathbb{N}}
\newcommand{\pa}{\partial}
\newcommand{\A}{\alpha}
\newcounter{kotaeflg}
\newcommand{\kotae}[1]{
\ifodd \arabic{kotaeflg}
#1
\fi
}
\begin{document}
\title[Global well-posedness for 1D NLS]%
      {Global well-posedness for the 1D cubic nonlinear Shcr\"odinger equation in $L^p,\,p>2$}
\author[Ryosuke Hyakuna]{Ryosuke Hyakuna}
\address[Ryosuke Hyaukna]{Polytechnic University of Japan}
\email{107r107r@gmail.com}
\keywords{ }
\subjclass[2000]{35Q55}
\begin{abstract}
In this paper, we show that the one dimensional cubic nonlinear Schr\"odinger equation
is globally well posed in $L^p$ for $2\le p <13/6$.  In particular, we prove that the global
solution enjoys the persistence
property for a twisted variable at any time, which implies the result is a natural exetension of the classical global well-posedness in $L^2$ to $L^p$. The proof exploits the data-decomposition argument originally developed by Vargas-Vega in the functional framework introduced by Zhou. 

\end{abstract}
\maketitle
%
%

\section{Introduction}
In this paper we consider the following Cauchy problem for the one dimensional cubic nonlinear Schr\"odinger equation:
\begin{equation}
  \label{NLS}  iu_t+u_{xx}+|u|^2u=0,\quad u|_{t=0}=\phi.
\end{equation}
First of all, it is well known that (\ref{NLS}) is locally and globally well posed in $L^2$.  This is a classical result which goes back to Y. Tsutsumi \cite{yT}.  Then it is natural to ask whether the local and global well-posedness in $L^2$ can extend to $L^p,\,p\neq 2$.  This is not an easy question.  One difficulty is the fact that when $p\neq 2$, the $L^p$ regularity is not propagated even in the linear case when $p\neq 2$; that is, one cannot have $U(t)\phi \in L^p$ for $\phi \in L^p,p\neq 2$ in general, where $U(t)$ is the Schr\"odinger evolution group $U(t)\triangleq e^{it\pa_x^2}$.  
This implies that one cannot expect the persistence property of solutions for (\ref{NLS}) when $p\neq 2$.  Moreover, even if some local results are established for data in $L^p,\,p\neq 2$, it is not easy to extend the local solution globally, since one cannot rely directly on conservation laws, which are usually keys to the global existence, as they are usually obtained 
 for data characterized by some kind of square integrability.  Although there have not been many earlier studies in this direction, perhaps, mainly due to these reasons, remarkable progress was made by Zhou.  In \cite{Zhou} he considered the case $1<p<2$ 
and proved that for any data $\phi \in L^p$ there exists a local solution of (\ref{NLS}) such that $v(t)\triangleq U(-t)u(t) \in L^p$. His result implies that the good quantity to follow in $L^p$ is $v(t)$, not the original solution $u$.  Note also that when $p=2$, $U(-t)u(t) \in L^2$ is equivalent to $u(t) \in L^2$.  So in this sense the well-posedness in $L^2$ with the persistence property of solution can be extended to $L^2$ in a natural manner.  Based on this idea the usual notion of well-posedness for (\ref{NLS}) in a data space $E$ can be reformulated as follows.  Let $I\subset \R$ be an interval and let $E$ be a
 Banach space of complex-valued functions on $\R$.  We define
 \begin{equation*}
C_{\mathfrak{S}} (I \,; E) \triangleq 
\{ U(t)v(t)\,|\, v \in C(I\,; E)\,\}.
 \end{equation*}

\begin{defn} \label{LWPdef}
Let $E$ be a Banach space of complex valued functions on $\R$.  (\ref{NLS}) is locally well posed in $E$ if for any $M>0$, there exists $T(M)>0$ such that, for any $\phi \in E$ with $\|\phi \|_E\le M$, there exists a unique solution $u:[0,T(M)] \times \R \to \C$ of (\ref{NLS}) satisfying
\begin{equation*}
    u \in C_{\mathfrak{S}} ([-T(M),T(M)]\,; E)\cap Z_{T(M)} \triangleq S_{T(M)},
\end{equation*}
where $Z_T$ is a space of complex valued functions on $[-T,T]\times \R$. Moreover, 
the map $\phi \mapsto u$ is continuous from
$\{\phi \in E\,|\, \|\phi \|_E\le M\,\}$ to $S_{T_M}$.  Furthermore, (\ref{NLS}) is globally well posed in $E$ if it is locally well posed in $E$ and the local solution $u$ extends to a global one such that $u \in C_{\mathfrak{S}}(\R\,; E)$ for any $\phi \in E$.
\end{defn}

Zhou's result shows that (\ref{NLS}) is locally well posed in $L^p$ for $1<p<2$ in this sense.  Note also that when $E=L^2$ or $E=H^s$, the well-posedness in $E$ is equivalent to the usual one.  In \cite{107jfa} the author extended Zhou's local results to equations with pure power nonlinearities $N(u) =|u|^{\A-1}u$.  Moreover, it is also shown in \cite{107jfa} that the local solution extends globally if
 $p<2$ is sufficiently close to $2$.

Zhou's work and subsequent results indicate that the local and global well-posed results in $L^2$ naturally extend to $L^p,p<2$.  Then it is also natural to ask whether the well-posedness holds for $p>2$.  This is an interesting question, because $L^p$-spaces with $p>2$ are characterized as a space of functions that decay more slowly at infinity than square integrable functions.  But there are even fewer known results for $p>2$ than $p<2$.  As far as the author knows, the first result in this direction is \cite{DSS}
where the authors prove some global results for data in Bessel potential spaces $H^s_p$ 
for some $s>0$ and $p>2$. See also \cite{scjfa} for similar results for slowly decaying data. For mere $L^p(=H^0_p),p>2$ as data space, in \cite{107slow} the author recently obtained the local well-posedness in $L^p$ for $2\le p<4$ in the sense of Definition \ref{LWPdef}.  There we used the same function space as originally introduced and used by Zhou \cite{Zhou}, which we denote by $Y_{q,\theta}^p$ in this paper.  The aim of this paper is to extend the local solution obtained in \cite{107slow} globally in the same functional framework.  In particular, we want the persistence property $v \in C(\R ;L^p)$ of the twisted variable $v(t)=U(-t)u(t)$ for the global solution which is equivalent to the usual persistence property of the solution when $p=2$.

The main result of this paper is as follows.  In the statement below, $q'$ denotes the conjugate of $q$, $Q_{\rho}(r)$ is defined by $2/Q_{\rho}(r) +1/r=1-1/\rho$.  $Y^{\rho}_{q,0}(I)$ is a function space on
$I \times \R,\,I\subset \R$ mentioned above and its definition is given in the next section.

\begin{thm}\label{Maintheorem}
Let $2 \le p < 13/6$.  Then (\ref{NLS}) is globally well posed
in $L^p$ in the sense of Definition \ref{LWPdef}.

More precisely, for any $\phi \in L^p$ there exists a unique global solution
$u\in C_{\mathfrak{S}}(\R\,; L^p(\R))$
of (\ref{NLS}) such that
\begin{equation*}
u|_{[-T,T]} \in Y_{2,0}^2 ([-T,T])+ Y^{3-\epsilon}_{(3-\epsilon)',0} ([-T,T])
    \end{equation*}
    for all $T>0$, where $\epsilon >0$ is a sufficiently small constant determined depending only on $\epsilon$.  Moreover,
    \begin{equation}
        u \in C(\R ; L^2 (\R)) \cap L_{loc}^{Q_2(r)}( \R ; L^r(\R)) +
        L^{Q_{3-\epsilon}(r)}_{loc} (\R ; L^r (\R)) \subset L^{Q_{3-\epsilon}(r)}_{loc} (\R ; L^r (\R)). \label{Strichartzregu}
    \end{equation}
    for any $r \in [3,6]$.
\end{thm}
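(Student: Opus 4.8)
The plan is to prove global well-posedness by combining the local theory from \cite{107slow} with a suitable global iteration scheme built on the Vargas--Vega data-decomposition method. The starting point is the observation that the twisted variable $v(t)=U(-t)u(t)$ is the right object to track: the local theory already gives, for data $\phi\in L^p$, a local solution whose twisted variable lies in $C([-T,T];L^p)$ together with the auxiliary Strichartz-type control in the $Y$-spaces. The key structural idea, following Vargas--Vega, is to split the initial datum as $\phi=\phi_1+\phi_2$, where $\phi_1\in L^2$ carries the ``rough but square-integrable'' part and $\phi_2$ is small in the relevant $L^p$-based norm. The $L^2$ part is then evolved by the classical $L^2$ theory (which is globally well posed with the conservation of the $L^2$ norm), while the genuinely $L^p$ part is controlled perturbatively on each time interval.

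The main steps I would carry out are as follows. First I would fix the decomposition $\phi=\phi_1+\phi_2$ with $\phi_1\in L^2$, $\phi_2\in L^p$, and set up the nonlinear problem for the difference, writing the full solution as $u=u_1+w$ where $u_1$ solves the $L^2$-critical problem with datum $\phi_1$ and $w$ solves the equation satisfied by the remainder with a forcing term coming from the cross interactions in $|u|^2u$. Second, I would establish a priori bounds: using the Strichartz estimates encoded in the $Y^{\rho}_{q,0}$-spaces and the admissibility relation $2/Q_{\rho}(r)+1/r=1-1/\rho$, I would show that on any interval $[-T,T]$ the $L^2$-part $u_1$ lies in $Y_{2,0}^2([-T,T])$ with norm controlled by $\|\phi_1\|_{L^2}$ (hence globally, by the $L^2$ theory and its conservation law), and that the remainder $w$ lies in $Y^{3-\epsilon}_{(3-\epsilon)',0}([-T,T])$. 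Third, I would run a bootstrap/continuity argument to extend the local remainder estimate to an arbitrary interval: because the $L^2$ part is globally bounded and the trilinear estimate for the cross terms is subcritical in the $p>2$ range for $p<13/6$, the remainder norm cannot blow up in finite time, so the local solution extends to all of $\R$. Finally, translating the resulting $Y$-space bounds back through the Strichartz embeddings yields the claimed space-time integrability \eqref{Strichartzregu} for $r\in[3,6]$, while the $C(\R;L^2)$ component comes from the $L^2$ persistence of $u_1$ and the $C_{\mathfrak{S}}(\R;L^p)$ conclusion follows from persistence of the twisted variable for $u=u_1+w$.

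The hard part will be closing the global estimate for the remainder $w$ uniformly in $T$. Unlike the $L^2$ setting, there is no conserved quantity that directly controls the $L^p$-part, so I expect the crucial difficulty to be proving that the trilinear/cross-interaction terms in the Duhamel formula for $w$ satisfy an estimate whose constant does not degenerate as the interval grows, and that the range $2\le p<13/6$ is exactly what makes the relevant exponents in $Q_{3-\epsilon}(r)$ admissible. This is where the numerical threshold $13/6$ should emerge: it is the boundary at which the Strichartz exponents coming from the Vargas--Vega decomposition still pair correctly with the Zhou $Y$-space framework, allowing the cross terms to be absorbed. I would therefore devote most of the technical effort to the trilinear estimates in the mixed $Y_{2,0}^2+Y^{3-\epsilon}_{(3-\epsilon)',0}$ norm and to verifying that the small-data part $\phi_2$ can be made small enough, after the $L^2$ splitting, to start the iteration on every interval.
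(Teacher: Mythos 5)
There is a genuine gap at the heart of your plan: the bootstrap you describe in your third step cannot close. You fix a \emph{single} decomposition $\phi=\phi_1+\phi_2$ and claim that, because the $L^2$ part is globally controlled and the cross-term trilinear estimates are ``subcritical,'' the remainder $w$ ``cannot blow up in finite time.'' But $w$ obeys no conservation law, and the trilinear estimates in the $Y$-framework (Proposition 2.5 of the paper) carry constants that grow with powers of $|I|$ and of the right endpoint $T_2$; the smallness of $\phi_2$ buys you a contraction only on a time interval whose length is dictated by the size of the $L^2$ part, and after that first interval the interaction terms produced by $v\bar v w$, $v\bar w w$, etc.\ are neither small nor naturally in $L^{p_0}$, so there is nothing to restart the perturbative argument with. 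No continuity argument repairs this: the obstruction is not a regularity bookkeeping issue but the absence of any globally coercive quantity for $w$. Relatedly, your explanation of where $13/6$ comes from is not correct: Strichartz admissibility of $Q_{3-\epsilon}(r)$ holds throughout the local range $2\le p<4$ (this is the content of the local theory in \cite{107slow}); the threshold $13/6$ is produced by the \emph{global} iteration, not by admissibility.

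What the paper actually does, and what your proposal is missing, is a two-parameter scheme. First, the decomposition is not fixed but runs over a family indexed by $N$: one works in the set $D_{p_0,\A}$ of data admitting decompositions $\phi=\varphi_N+\psi_N$ with $\|\varphi_N\|_{L^2}\le C_0N^{\A}$ (large!) and $\|\psi_N\|_{L^{p_0}}\le C_0/N$ (small). For each $N$ the perturbative step works only on an interval of length $\delta_N\sim N^{-4\A}$. Second, the solution is extended by re-decomposing at each time $j\delta_N$: the interaction Duhamel terms are estimated in $L^2$ (Proposition 2.6) and absorbed into the new $L^2$ datum $\varphi^{(j)}_N$, while the $L^{p_0}$ datum is always the free evolution $U((j-1)\delta_N)\psi_N$; one then tracks the growth of $\|\varphi^{(j)}_N\|_{L^2}$ and shows it stays below $2C_0N^{\A}$ for $n_0\sim N^{\frac{2p_0}{3p_0-2}(2\A+1-\frac{2\A}{p_0})}$ steps. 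The total lifespan $T_N=n_0\delta_N$ tends to infinity exactly when $\A<\frac{p_0}{2(2p_0-1)}$, and a uniqueness lemma in $L^4_tL^6_x$ glues the solutions $u^{(N)}$ into a global one. Finally, $L^p\subset D_{p_0,\frac{1-\theta}{\theta}}$ is proved by a real-interpolation splitting, and optimizing $\theta$ against the threshold with $p_0=3-\epsilon$ yields $p<\frac{13-5\epsilon}{6-2\epsilon}\to 13/6$. The persistence of the twisted variable $U(-t)u(t)$ in $L^p$ also requires a separate argument (a dual form of Kato's $L^\rho$ Strichartz estimate applied to the Duhamel term), which your sketch asserts but does not supply.
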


\begin{rem}
\begin{enumerate}
    \item Combining Theorem \ref{Maintheorem} with the global result \cite[Theorem 1.3]{107jfa} for $p<2$, we obtain the global well-posedness of (\ref{NLS}) in $L^p$ for $9/5<p<13/6$.
\item We note that there are several local and global results for non-$L^2$-based spaces other than $L^p$.  In \cite{Grunrock} Gr\"unrock proved that (\ref{NLS}) is locally well posed in the Fourier-Lebesgue spaces $\hat{L}^p$ defined by $\hat{L}^p
\triangleq \{ \phi \in \mathcal{S}' \,| \, \hat{\phi} \in L^{p'} \}$ for $1<p<\infty$.  Note that $L^p\subset \hat{L}^p$ if $p\le 2$ and $\hat{L}^p \subset L^p$ if $p\ge 2$.  
In \cite{NLSModu} the authors established global solutions of (\ref{NLS}) for data in the Modulation spaces $M_{p,p'}$ for $p>2$ sufficiently close to $2$.  Note that $M_{p,p'} \subset L^p$ when $p>2$.

    \end{enumerate}
\end{rem}

This paper is organized as follows.  In section 2, we give the definition of the function space $Y^p_{q,\theta}$ and related spaces.  We note that in \cite{107slow} we proved the local well posedness in $L^p$ for $2<p<4$ by establishing a local solution in this space.  In essence, we show the global well-posedness by extending the local solution given by \cite{107slow} globally 
in the same functional framework.  Next, in this section we prove some non-linear estimates in $Y^p_{q,\theta}$ which we need to construct a global solution. In section 3, we prove Theorem \ref{Maintheorem}.  The most of the section is devoted to the proof of a key global existence which states that a global solution exists for a set $D_{\A,p_0}$ of data that can be decomposed as the sum of an arbitrarily large $L^2$ function and an arbitrarily small $L^{p_0}$ function for some $p_0>2$.  Then the existence part of the global well-posedness in $L^p$ is immediately obtained by showing that $L^p$ is included in $D_{p_0,\A}$ for suitable $p_0,\A$ when $p<13/6$.  This approach is known as the splitting argument originally developed by Vargas--Vega  \cite{VV} and there are several studies (see e.g. \cite{NLSModu,Grunrock, 107jfa, HTT, 107T}) that exploit this method to obtain a global solution of nonlinear Schr\"odinger equations for data in non-$L^2$-based spaces.

\textbf{Notations.} 
\begin{enumerate}
    \item 
$a'$ is the conjugate of $a$, that is $1/a+1/a'=1$.
\item 
For $1\le r <\infty,\,1\le q \le \infty$ and $I \subset [0,\infty)$ we define 
\begin{equation*}
    L^q (I ; L^r)
    \triangleq \{ u :I\times \R \to \C\,
    |\, \|u\|_{L^q(I  ; L^r)} <\infty\,\},
\end{equation*}
where
\begin{equation*}
 \|u\|_{L^q(I ; L^r)} 
 \triangleq \left( \int_I \left( \int_{\R} |u(t,x)|^r dx\right)^{\frac{q}{r}} dt \right)^{\frac{1}{q}},
\end{equation*}
when $q<\infty$ and
\begin{equation*}
 \|u\|_{L^{\infty}(I ; L^r)} 
 \triangleq {{\rm ess}\, \sup}_{t \in I} \ \left( \int_{\R} |u(t,x)|^r dx\right)^{\frac{1}{r}} dt .
\end{equation*}

\item 
The Fourier transform of $f$ is denoted by $\hat{f}$ or $\mathcal{F}f$.  The inverse Fourier transform is denoted by $\mathcal{F}^{-1}$.
\item Let $1\le p \le \infty$ and $2\le r \le \infty$.  The exponent $Q_p(r)$ is defined by
\begin{equation*}
    \frac{2}{Q_p(r)} +\frac{1}{r}=1-\frac{1}{p} \left(= \frac{1}{p'} \right).
\end{equation*}
Note that $(Q_2(r),r)$ is called admissible for which the well-known Strichartz estimate
holds true:
\begin{equation*}
    \|U(t) \phi \|_{L^{Q_2(r)}(\R ; L^r) } \le C\| \phi \|_{L^2}.
\end{equation*}

\item 
$C$ denotes positive constants that may vary from line to line while subscripted ones like $C_j$ are fixed.
\end{enumerate}
Hereafter we only consider the case $t>0$.  The global solution for $t<0$ can be established in a similar manner.

\section{Function spaces and key estimates}\label{function}
\subsection{Function spaces}

We give the definition of the space $Y_{q,\theta}^p$ along with related spaces.  These spaces were first introduced by Zhou in \cite{Zhou}
to show the local well-posedness of (\ref{NLS}) in $L^p,\,1<p<2$.
\begin{defn}
Let $I=[T_1,T_2],$ where $0<T_1<T_2$, and let $1\le p,q < \infty$ and $\theta\in \R$. 
\begin{enumerate}
\item
 The semi-norm $\|\cdot\|_{\tilde{X}^p_{q,\theta}(I)}$ is defined by
\begin{equation*}
\|v\|_{\tilde{X}^p_{q,\theta}(I)} \triangleq \left(\int^{T_2}_{T_1} \left( s^{\theta} \| (\pa_s v) (s,\cdot)   \|_{L^p}\right)^q ds \right)^{\frac{1}{q}},
\end{equation*}
for any $v$ such that $\|v\|_{\tilde{X}^p_{q,\theta}(I)} <\infty$.

The space $X^p_{q,\theta}(I)$ is defined by
\begin{equation*}
X^p_{q,\theta}(I) \triangleq \{v \in C([T_1,T_2] ;L^p(\R))\,| \, \|v\|_{X^p_{q,\theta}(I)} <\infty \}
\end{equation*}
equipped with the norm
\begin{equation*}
\|v\|_{X^p_{q,\theta}(I)} \triangleq \|v(T_1)\|_{L^p}+\|v\|_{\tilde{X}^p_{q,\theta}(I)}.
\end{equation*}

\item
The space $Y^p_{q,\theta}(I)$ is defined by
\begin{equation*}
Y^p_{q,\theta}(I)\triangleq \{U(t)v(t)\,| \, v \in X^p_{q,\theta}(I)  \},
\end{equation*}
equipped with the norm
\begin{equation*}
\|u\|_{Y^p_{q,\theta}(I)} \triangleq \|U(-t)u(t)\|_{X^p_{q,\theta}(I)}.
\end{equation*}
The semi-norm  $\|\cdot\|_{\tilde{Y}^p_{q,\theta}(T)}$ is defined by
\begin{equation*}
\|u\|_{\tilde{Y}^p_{q,\theta}(I)} \triangleq \| U(-t)u(t)\|_{\tilde{X}^p_{q,\theta}(I)}.
\end{equation*}
for any $u \in Y^p_{q,\theta}(I)$.
\end{enumerate}
\end{defn}

It is not difficult to show that the spaces $X^p_{q,\theta}(T),\,Y^p_{q,\theta}(T)$ are Banach spaces if $-\infty <q'\theta <1$.  
In particular arguing as in the proof of \cite[Lemma 2.1]{107indiana} it is easily checked that the limit of every Cauchy sequence in the
$X^p_{q,\theta}(T)$-norm belongs to $C([0,T] ; L^p)$.  Similarly, it can be shown that $X_{q,\theta}^p(T)$ is continuously embedded in $C([0,T] ;L^p)$ and hence $Y^p_{q,\theta}(T)\hookrightarrow C_{\mathfrak{S}}([0,T]; L^p (\R))$.

\textbf{Notations.} In what follows, we only treat these function spaces with $\theta=0$.  So we simply denote $X^p_{q,0}$ and $Y^p_{q,0}$ by
$X^p_q$ and $Y^p_q$.  Also, when $T_1=0$, we sometimes write $X^p_{q,\theta}(T_2)$ and $Y^p_{q,\theta}(T_2)$ instead of
$X^p_{q,\theta}([0,T_2])$ and $Y^p_{q,\theta}([0,T_2])$.

 \subsection{Nonlinear estimates}
We present key nonlinear estimate we 
need in order to establish a local and global solution of (\ref{NLS}) in the next section.  These estimates are obtained as corollaries of the following generalized Strichartz estimate.

 \begin{lem}{\rm(\cite[Theorem 5]{107T})} \label{odGFS}
Let $2 \le p <4$ and let $r \ge 2$ be such that
\begin{equation}
   \label{conditionodGFS} 0<\frac{1}{Q_p (r)} <\min \left( \frac{1}{4},\frac{1}{2}-\frac{1}{r}\right).
\end{equation}

Then the estimate
\begin{equation}
\| U(t) \phi \|_{L^q(\R ; L^r(\R)} \le C\|\hat{\phi}\|_{L^p}. \label{ofdStr}
\end{equation}
holds true.  In particular, the estimate
\begin{equation}
\| t^{-\frac{2}{q} }U(1/4t) \phi \|_{L^q(\R^{+} ; L^r(\R))} \le C\|\hat{\phi}\|_{L^p}.
\end{equation}
holds true.
\end{lem}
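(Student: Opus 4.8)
The plan is to read the claim as a mixed-norm Fourier extension estimate for the parabola and to prove it by the bilinear (curvature) method underlying the Vargas--Vega framework \cite{VV}. Setting $g=\hat{\phi}$, one has $U(t)\phi(x)=\int_{\R}e^{i(x\xi-t\xi^2)}g(\xi)\,d\xi$, so (\ref{ofdStr}) is precisely the bound $\|Eg\|_{L^q(\R;L^r)}\le C\|g\|_{L^p}$ for the extension operator $E$ attached to the curve $\{(\xi,\xi^2)\}$, with $q=Q_p(r)$. First I would record that the defining relation $2/Q_p(r)+1/r=1/p'$ is exactly the scaling identity forced by the dilation $\phi\mapsto\phi(\lambda\,\cdot)$, so these are the only exponents for which a scale-invariant estimate can hold. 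This same invariance shows that a purely frequency-localized bound cannot simply be summed when $p,q,r\neq 2$ (there is no $L^2$-orthogonality to exploit), so the curvature of the parabola must genuinely be used; in particular the estimate cannot follow from the dispersive decay plus $TT^*$ alone, since $U(t)$ is an isometry on every Fourier--Lebesgue space.

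The engine is the one-dimensional bilinear estimate. If $\hat{\phi}_1,\hat{\phi}_2$ are supported where the frequencies satisfy $|\xi_1-\xi_2|\gtrsim N$, then
\[
\|U(t)\phi_1\,U(t)\phi_2\|_{L^2(\R;L^2)}\le \frac{C}{\sqrt{N}}\,\|\hat{\phi}_1\|_{L^2}\,\|\hat{\phi}_2\|_{L^2},
\]
which follows from Plancherel in $(t,x)$ once one computes that $(\xi_1,\xi_2)\mapsto(\xi_1+\xi_2,\xi_1^2+\xi_2^2)$ has Jacobian $2|\xi_1-\xi_2|$. The resulting $N^{-1/2}$ gain is the only place where the non-vanishing curvature enters, and it is precisely what lets one pass beyond the classical range $p=2$ (for which (\ref{ofdStr}) is the usual Strichartz estimate, since $\|\hat{\phi}\|_{L^2}=\|\phi\|_{L^2}$).

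Next I would Whitney-decompose the frequency support into dyadically separated pairs, organize $|Eg|^2$ over these pairs, estimate the separated pairs by the bilinear inequality above, and combine the outcome with the elementary Hausdorff--Young bounds $\|Eg\|_{L^\infty(\R;L^r)}\le\|g\|_{L^{r'}}$ and $\|Eg\|_{L^\infty(\R;L^\infty)}\le\|g\|_{L^1}$ by interpolation in the exponent triple $(1/q,1/r,1/p)$. The role of the hypotheses (\ref{conditionodGFS}) is exactly to furnish the $\epsilon$ of room needed here: the strict bound $1/Q_p(r)<1/2-1/r$ (equivalently $r>p$) and $1/Q_p(r)<1/4$ (equivalently $1/p+1/r>1/2$), together with $p<4$, are the thresholds at which the bilinear gain $N^{-1/2}$ still dominates the number of interacting frequency pairs, so that the summation over Whitney pieces converges and the interpolation lands strictly inside the admissible region, while $1/Q_p(r)>0$ keeps $q$ finite. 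The final ``in particular'' estimate then requires no new ideas: restricting (\ref{ofdStr}) to $\R^+$ and substituting $s=1/(4t)$, with $ds=(4t^2)^{-1}\,dt$, produces exactly the weight $t^{-2/q}$, so that $\|t^{-2/q}U(1/4t)\phi\|_{L^q(\R^+;L^r)}=4^{1/q}\|U(s)\phi\|_{L^q(\R^+;L^r)}\le C\|\hat{\phi}\|_{L^p}$.

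The main obstacle I expect is the third step: assembling the single curvature input into a sharp linear bound across all frequency scales without $L^2$-orthogonality. The bilinear estimate itself is routine, but pinning down the exact exponent range in (\ref{conditionodGFS}) and carrying out the almost-orthogonal summation of the Whitney pieces in the non-Hilbertian norms $L^q(\R;L^r)$ with data measured in $L^p$, $p>2$, is delicate, and it is there that all the strict inequalities in the hypothesis are consumed.
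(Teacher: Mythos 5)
First, a point of comparison: the paper does not prove Lemma \ref{odGFS} at all --- it imports it verbatim from \cite[Theorem 5]{107T}, where it is deduced from the diagonal case $q=r=3p'$ (which, as the paper's own remark notes, goes back to Fefferman--Zygmund \cite{Fefferman}) together with mixed-norm interpolation against the classical $p=2$ Strichartz family. Measured against that argument, your proposal has a genuine and precisely locatable gap: every estimate you allow yourself as input measures the data at an exponent at most $2$. Your ``engine'', the Plancherel-based bilinear estimate, has right-hand side $\|\hat{\phi}_1\|_{L^2}\|\hat{\phi}_2\|_{L^2}$; your auxiliary bounds $\|U(t)\phi\|_{L^\infty(\R;L^r)}\le\|\hat{\phi}\|_{L^{r'}}$ and $\|U(t)\phi\|_{L^\infty(\R;L^\infty)}\le\|\hat{\phi}\|_{L^1}$ have data exponents $r'\le 2$ and $1$. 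Interpolation in the triple $(1/q,1/r,1/p)$ produces only convex combinations of the input triples, so anything your scheme outputs satisfies $1/p\ge 1/2$, i.e.\ $p\le 2$. Consequently the scheme cannot prove \eqref{ofdStr} for a single $p>2$, and that is the entire content of the lemma, the case $p=2$ being classical Strichartz. In particular, your assertion that the $N^{-1/2}$ bilinear gain ``is precisely what lets one pass beyond the classical range $p=2$'' is wrong: no amount of Whitney summation and interpolation of $L^2$-based inputs can move the data exponent past $2$. (The one conceivable repair inside your framework --- H\"older on each Whitney piece, $\|g_j\|_{L^2}\le |I_j|^{1/2-1/p}\|g_j\|_{L^p}$, which you never invoke --- still fails as described, because every estimate in the lemma is scale-invariant, so the resulting sum over Whitney scales is critical and diverges logarithmically.)

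The missing idea is to apply Hausdorff--Young at the level of the \emph{product}, not of $U(t)\phi$ itself. With $g=\hat{\phi}$ and $\Phi(\xi_1,\xi_2)=(\xi_1+\xi_2,\xi_1^2+\xi_2^2)$, the square $(U(t)\phi)^2$ is the space-time inverse Fourier transform of $\bigl((g\otimes g)\circ\Phi^{-1}\bigr)\,|2(\xi_1-\xi_2)|^{-1}$, so for $q\ge 4$,
\begin{equation*}
\|U(t)\phi\|_{L^q_{t,x}}^2=\bigl\|(U(t)\phi)^2\bigr\|_{L^{q/2}_{t,x}}
\le C\left( \iint |g(\xi_1)g(\xi_2)|^{(q/2)'}\,|\xi_1-\xi_2|^{1-(q/2)'}\,d\xi_1\,d\xi_2 \right)^{1/(q/2)'},
\end{equation*}
and the Hardy--Littlewood--Sobolev inequality bounds the right-hand side by $C\|g\|_{L^p}^2$ exactly when $q=3p'$ and $2\le p<4$ (the restriction $p<4$ is what HLS requires). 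Here the dual exponent lands on the frequency side of the square --- this, and not the $L^2$ bilinear gain, is what allows $p>2$; localized to frequency-separated factors this is Gr\"unrock's bilinear Fourier--Lebesgue estimate \cite{Grunrock}, the correct replacement for your engine. The off-diagonal range then follows by interpolating the diagonal family $\{(p,3p',3p'):2\le p<4\}$ against the $p=2$ Strichartz family: a short computation in the variables $(1/p,1/q)$ on the scaling surface $2/q+1/r=1/p'$ shows that the region \eqref{conditionodGFS} is precisely the (relatively open) convex hull of these two segments. Your final step --- the substitution $s=1/4t$, giving the weighted estimate with constant $4^{1/q}$ --- is correct, but it is the only part of the proposal that survives.
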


\begin{rem}
The case $p=2$ of (\ref{ofdStr}) coincides with the well-known Strichartz estimate.  We note that (\ref{ofdStr}) is used in \cite{Grunrock,CVV,107T} where the authors construct solutions to nonlinear Schr\"odinger equations with data in the Fourier-Lebesgue spaces.  Note also that the diagonal case $Q_p(r)=r$ of the estimate goes back to \cite{Fefferman}.
\end{rem}

The following inclusion relation can be obtained as a corollary of Lemma \ref{odGFS}.  For $1\le \rho,r <\infty$,\,$\A\in\R$, and $I \subset [0,\infty)$ we define 
\begin{equation*}
    L^{\rho} (I, t^{\A} dt ; L^r)
    \triangleq \{ u :I\times \R \to \C\,
    |\, \|u\|_{L^{\rho}(I ,t^{\A}dt ; L^r)} <\infty\,\},
\end{equation*}
where
\begin{equation*}
 \|u\|_{L^{\rho}(I ,t^{\A}dt ; L^r)} 
 \triangleq \left( \int_I \left( \int_{\R} |u(t,x)|^r dx\right)^{\frac{\rho}{r}} t^{\A}dt \right)^{\frac{1}{\rho}}.
\end{equation*}
\begin{cor}{\rm(\cite[(4.10)]{107slow})}\label{Strinclusion}
Let $2 \le p <4$ and let $r \ge 2$ be such that
\begin{equation}
 \label{Strincluctioncondition}   0<\frac{1}{Q_p (r)} <\min \left( \frac{1}{4},\frac{1}{2}-\frac{1}{r}\right).
\end{equation}
Then the following inclusion holds true for any finite interval $I\subset [0,\infty)$ and $q\in [1,\infty]$ and $\theta \in \R$ with $q'\theta >-1$,
\begin{equation*}
    Y^p_{q,\theta}(I) \subset 
    L^{Q_p(r)}( I ,t^{\frac{1}{p}-\frac{1}{2}}dt; L^r).
\end{equation*}

In particular, 
\begin{equation}
    Y_{p,\theta}^p(T) \subset L^{Q_p(r)}([0,T] ; L^r).
\end{equation}
\end{cor}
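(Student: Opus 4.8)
The plan is to deduce the inclusion from a single--profile weighted Strichartz estimate and then to insert the structure of $X^p_{q,\theta}(I)$ by the fundamental theorem of calculus. The single--profile estimate I would isolate is
\[
\|U(t)\phi\|_{L^{Q_p(r)}(I,\,t^{\frac1p-\frac12}dt;\,L^r)}\le C(I)\,\|\phi\|_{L^p}\qquad(\phi\in L^p),
\]
valid on any finite $I\subset[0,\infty)$. Granting this, write $u=U(t)v(t)$ with $v\in X^p_{q,\theta}(I)$ and use $v(t)=v(T_1)+\int_{T_1}^{t}(\pa_s v)(s)\,ds$ to split $u(t)=U(t)v(T_1)+\int_{T_1}^{t}U(t)(\pa_s v)(s)\,ds$. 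The first term is bounded by the single--profile estimate applied to $\phi=v(T_1)$, giving $C\|v(T_1)\|_{L^p}$. For the second term I would apply Minkowski's inequality in $s$ to pull the $s$--integral outside the $L^{Q_p(r)}(I,t^{\frac1p-\frac12}dt;L^r)$--norm, apply the single--profile estimate to each $\phi=(\pa_s v)(s)$ on $[s,T_2]$, and then close with H\"older in $s$ against the weight $s^{\theta}$; the factor $\big(\int_{T_1}^{T_2}s^{-q'\theta}\,ds\big)^{1/q'}$ is finite precisely by the hypothesis on $q'\theta$, and the result is $\le C\|v\|_{\tilde X^p_{q,\theta}(I)}$. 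Adding the two contributions yields $\|u\|_{L^{Q_p(r)}(I,t^{\frac1p-\frac12}dt;L^r)}\le C\|u\|_{Y^p_{q,\theta}(I)}$, which is the asserted inclusion.

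The core is therefore the single--profile estimate, and here Lemma~\ref{odGFS} enters through its rescaled form $\|t^{-2/Q_p(r)}U(1/4t)\phi\|_{L^{Q_p(r)}(\R^+;L^r)}\le C\|\hat\phi\|_{L^p}$. The obstacle is that the single--profile estimate carries the \emph{physical} norm $\|\phi\|_{L^p}$, whereas Lemma~\ref{odGFS} produces the \emph{Fourier} norm $\|\hat\phi\|_{L^p}$; the bridge is the pseudoconformal factorization of $U(t)$. Setting $g:=\mathcal F^{-1}\bar\phi$, so that $\|\hat g\|_{L^p}=\|\phi\|_{L^p}$, and completing the square in the kernel of $U(t)$, one obtains the pointwise identity $|U(t)\phi(x)|=C\,s^{1/2}\,|U(s)g(2sx)|$ with $s=1/4t$, hence
\[
\|U(t)\phi\|_{L^r}=C\,t^{\frac1r-\frac12}\,\|U(1/4t)g\|_{L^r}.
\]
Substituting this into the weighted norm, the three powers of $t$ coming from the dispersive factor $t^{\frac1r-\frac12}$, from the target weight $t^{\frac1p-\frac12}$, and from the $t^{-2/Q_p(r)}$ of the rescaled estimate combine into a single residual power $t^{E}$ with, after simplification via $2/Q_p(r)+1/r=1-1/p$,
\[
E=\Big(\tfrac12-\tfrac1p\Big)\big(Q_p(r)-1\big)\ge0.
\]
Since $E\ge0$ and $I$ is finite, $t^{E}$ is bounded on $I$, so this residual power is absorbed into $C(I)$ and the single--profile estimate follows at once from the rescaled form of Lemma~\ref{odGFS}.

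The ``in particular'' assertion is the case $q=p$, $\theta=0$, $I=[0,T]$: then $q'\theta=0$ meets the integrability requirement trivially, and because $t^{\frac1p-\frac12}$ is bounded below on $(0,T]$ (being a nonpositive power for $p\ge2$) the weight may be dropped at the cost of a factor $T^{(\frac12-\frac1p)/Q_p(r)}$, giving $Y^p_{p,0}(T)\subset L^{Q_p(r)}([0,T];L^r)$. The step I expect to be most delicate is the careful execution of the pseudoconformal identity together with the exact bookkeeping of the powers of $t$: one must track the dilation Jacobian and the chirp factors precisely to confirm that the dispersive weight produced by the substitution $t\mapsto1/4t$ is exactly $t^{\frac1r-\frac12}$ and that the resulting exponent $E$ is nonnegative on the whole admissible range $2\le p<4$, $r\ge2$ with $0<1/Q_p(r)<\min(1/4,1/2-1/r)$. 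A secondary point requiring care is the endpoint $t=0$ when $T_1=0$, where the target weight is singular for $p>2$; there one relies on Lemma~\ref{odGFS} over all of $\R^+$ (equivalently, on the $s\to\infty$ decay after the substitution) to guarantee convergence.
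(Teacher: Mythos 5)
Your proposal is correct in substance and follows exactly the route the paper intends: the paper gives no proof of Corollary \ref{Strinclusion} at all (it is quoted from \cite[(4.10)]{107slow}), but it is presented as a consequence of Lemma \ref{odGFS}, and your derivation --- the pseudoconformal identity $|U(t)\phi(x)|=Cs^{1/2}|U(s)g(2sx)|$ with $s=1/4t$, $g=\mathcal{F}^{-1}\bar{\phi}$, which converts $\|U(t)\phi\|_{L^r}$ into $Ct^{\frac1r-\frac12}\|U(1/4t)g\|_{L^r}$ with $\|\hat{g}\|_{L^p}=\|\phi\|_{L^p}$, followed by the exponent bookkeeping and absorption of the residual power on the finite interval --- is precisely the mechanism the paper itself uses inside the proof of Proposition \ref{trilinear1}, via (\ref{factorization}) and the identity $\mathcal{F}M_tZ=\overline{U(1/4t)\mathcal{F}^{-1}\overline{Z}}$. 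Your arithmetic checks out: since $2/Q_p(r)=1-1/p-1/r$, one has $\frac1p-\frac12+Q_p(r)\left(\frac1r-\frac12\right)=-2+E$ with $E=\left(\frac12-\frac1p\right)\left(Q_p(r)-1\right)\ge 0$, so the weighted integrand is dominated by $T_2^{E}\,t^{-2}\|U(1/4t)g\|_{L^r}^{Q_p(r)}$ and the rescaled estimate of Lemma \ref{odGFS} applies; the fundamental-theorem-of-calculus splitting plus Minkowski plus the single-profile bound is then the standard and correct way to pass from a profile estimate to the $X^p_{q,\theta}$ structure.

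One point needs repair. Your parenthetical claim that the H\"older factor $\bigl(\int_{T_1}^{T_2}s^{-q'\theta}\,ds\bigr)^{1/q'}$ is finite \emph{precisely} by the stated hypothesis on $q'\theta$ is not accurate under this paper's conventions. With the weight $s^{+\theta}$ appearing inside the $\tilde{X}^p_{q,\theta}$ seminorm, finiteness of that integral on an interval with $T_1=0$ requires $q'\theta<1$, not $q'\theta>-1$: for instance $q=q'=2$, $\theta=1$ satisfies the stated hypothesis yet gives $\int_0^{T_2}s^{-2}\,ds=\infty$, and your Minkowski argument then produces an infinite bound. The condition $q'\theta<1$ is also the one the paper states for $X^p_{q,\theta}$ to be a Banach space, so the corollary's ``$q'\theta>-1$'' is almost certainly a sign-convention slip carried over from \cite{107slow} (where the weight is presumably $s^{-\theta}$); it is harmless for every application in this paper, which only uses $\theta=0$. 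Your proof should therefore invoke $q'\theta<1$ (or restrict attention to $T_1>0$, where any $\theta$ works) rather than asserting that the hypothesis as printed is what closes the H\"older step; with that correction the argument is complete.
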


\medskip

Now we prove key nonlinear estimates.  Let $0<T_1<T_2$ and $t_0 \in [T_1,T_2]$, and let $Z_j,\,j=1,2,3$ be functions on $[T_1,T_2] \times \R$.  We introduce the trilinear form $\mathscr{D}_{t_0}$ by
\begin{equation*}
\mathscr{D}_{t_0}(Z_1,Z_2,Z_3)\triangleq \int^t_{t_0} U(-s) [(U(s)Z_1(s))( \overline{U(s)Z_2(s)} )(U(s)Z_3(s))] ds.
\end{equation*}
When $t_0=0$ we simply denote $\mathscr{D}_0$ by $\mathscr{D}$.

\begin{prop} \label{trilinear1}
Let $2\le p_0 <4$ and let $I\subset [0,\infty)$ be a finite interval and $t_0 \in I$.
\begin{enumerate}
    \item Assume that $I=[0,T],\,T>0$. Then the following estimates hold true:

\begin{eqnarray}
\| \mathscr{D}(V_1,V_2,W_3) \|_{\tilde{X}^{p_0}_{p_0'}(T)} &\le & C| I |^{\frac{1}{2}-\frac{1}{p_0}}\|V_1 \|_{X_{1,0}^2(T)}\|V_2 \|_{X_{1}^2(T)}\|W_3 \|_{X_{1}^{p_0}(T) } \label{NL1}\\
\| \mathscr{D}(V_1,W_2,V_3) \|_{\tilde{X}^{p_0}_{p_0'}(T)} &\le & C | I |^{ \frac{1}{2}-\frac{1}{p_0} } \|V_1 \|_{X_{1}^2(T)}\|W_2 \|_{X_{1}^{p_0}(I)}\|V_3 \|_{X_{1}^2(T)} \label{NL2}\\
\| \mathscr{D}(V_1,W_2,W_3) \|_{\tilde{X}^{p_0}_{p_0'}(T)} &\le & C|I|^{\frac{3}{4}-\frac{3}{2p_0}}\|V_1 \|_{X_{1}^2(T)}\|W_2 \|_{X_{1}^{p_0}(T)}\|W_3 \|_{X_{1,0}^{p_0}(T)} \label{NL3} \\
\| \mathscr{D}(W_1, V_2,W_3) \|_{\tilde{X}^{p_0}_{p_0'}(T)} &\le & C|I|^{\frac{3}{4}-\frac{3}{2p_0}}\|W_1 \|_{X_{1}^{p_0}(T)}\|V_2 \|_{X_{1}^2(T)}\|W_3 \|_{X_{1}^{p_0}(T)} \label{NL4} \\
\| \mathscr{D}(W_1, W_2,W_3) \|_{\tilde{X}^{p_0}_{p'_0}(T)} &\le & C|I|^{1-\frac{2}{p_0}}\|W_1 \|_{X_{1}^{p_0}(T)}\|W_2 \|_{X_{1}^{p_0}(T)}\|W_3 \|_{X_{1}^{p_0}(T)},\label{NL5}
\end{eqnarray}
where the constant $C$ does not depend on $V_1,V_2,W_j,\,j=1,2,3$ and $I$.
\item Assume that $I=[T_1,T_2],\,0<T_1<T_2$.  Then the following estimates holds true:
\begin{eqnarray}
\| \mathscr{D}_{t_0}(V_1,V_2,W_3) \|_{\tilde{X}^{p_0}_{p_0'}(I)} &\le & C| I |^{ \frac{1}{2}-\frac{1}{p_0} }\|V_1 \|_{X_{1}^2(I)}\|V_2 \|_{X_{1}^2(I)}\|W_3 \|_{X_{1}^{p_0}(I) } \label{NL12}\\
\| \mathscr{D}_{t_0}(V_1,W_2,V_3) \|_{\tilde{X}^{p_0}_{p_0'}(I)} &\le & C | I |^{ \frac{1}{2}-\frac{1}{p_0}} \|V_1 \|_{X_{1}^2(I)}\|W_2 \|_{X_{1}^{p_0}(I)}\|V_3 \|_{X_{1}^2(I)} \label{NL22}\\
\| \mathscr{D}_{t_0}(V_1,W_2,W_3) \|_{\tilde{X}^{p_0}_{p_0'}(I)} &\le & CT_2^{\frac{1}{2}-\frac{1}{p_0}} |I|^{\frac{1}{4}-\frac{1}{2p_0}}\|V_1 \|_{X_{1}^2(I)}\|W_2 \|_{X_{1}^{p_0}(I)}\|W_3 \|_{X_{1}^{p_0}(I)} \label{NL32} \\
\| \mathscr{D}_{t_0}(W_1, V_2,W_3) \|_{\tilde{X}^{p_0}_{p_0'}(I)} &\le & CT_2^{\frac{1}{2}-\frac{1}{p_0}} |I|^{\frac{1}{4}-\frac{1}{2p_0}}\|W_1 \|_{X_{1}^{p_0}(I)}\|V_2 \|_{X_{1}^2(I)}\|W_3 \|_{X_{1}^{p_0}(I)} \label{NL42} \\
\| \mathscr{D}_{t_0}(W_1, W_2,W_3) \|_{\tilde{X}^{p_0}_{p_0'}(I)} &\le & CT_2^{1-\frac{2}{p_0}}\|W_1 \|_{X_{1}^{p_0}(I)}\|W_2 \|_{X_{1}^{p_0}(I)}\|W_3 \|_{X_{1}^{p_0}(I)},\label{NL52}
\end{eqnarray}
where the constant $C$ does not depend on $V_1,V_2,W_j,\,j=1,2,3$ and $I$.

\end{enumerate}

\end{prop}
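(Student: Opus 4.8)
The plan is to reduce every estimate in Proposition \ref{trilinear1} to a single pointwise-in-time bound on the integrand defining $\mathscr{D}_{t_0}$ and then to feed each factor into the Strichartz inclusion of Corollary \ref{Strinclusion}. The first observation is that the seminorm $\|\cdot\|_{\tilde{X}^{p_0}_{p_0'}(I)}$ only sees $\pa_t$, and by the fundamental theorem of calculus
\[
\pa_t \mathscr{D}_{t_0}(Z_1,Z_2,Z_3)(t) = U(-t)\big[(U(t)Z_1(t))\,\overline{(U(t)Z_2(t))}\,(U(t)Z_3(t))\big],
\]
which is independent of $t_0$. Hence parts (i) and (ii) concern one and the same quantity, and it suffices to bound $\big(\int_I \|U(-t)G(t)\|_{L^{p_0}}^{p_0'}\,dt\big)^{1/p_0'}$, where $G(t)$ is the bracketed triple product. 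This makes clear both that $t_0$ is irrelevant and that (i) should fall out of the bookkeeping of (ii) upon setting $T_1=0$.

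The delicate point is the operator $U(-t)$, which is \emph{not} bounded on $L^{p_0}$ when $p_0\neq2$. To get past it I would use the standard dispersive estimate
\[
\|U(-t)f\|_{L^{p_0}} \le C\,t^{\frac{1}{p_0}-\frac12}\,\|f\|_{L^{p_0'}},\qquad t>0,
\]
a consequence of the explicit kernel of $U(t)$ together with the Hausdorff--Young inequality (the same factorization that underlies Lemma \ref{odGFS}). Applying it with $f=G(t)$ and then Hölder in $x$ yields
\[
\|U(-t)G(t)\|_{L^{p_0}} \le C\,t^{\frac{1}{p_0}-\frac12}\prod_{j=1}^3 \|U(t)Z_j(t)\|_{L^{r_j}},\qquad \tfrac{1}{r_1}+\tfrac{1}{r_2}+\tfrac{1}{r_3}=\tfrac{1}{p_0'},
\]
where the $r_j\ge2$ are chosen, case by case, so that each factor meets the admissibility requirement \eqref{Strincluctioncondition} for the Strichartz estimate attached to it.

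Next I would estimate each factor by Corollary \ref{Strinclusion}: a $V$-factor (in $X_1^2$) is controlled in the unweighted norm $L^{Q_2(r_j)}(I;L^{r_j})$ by $\|V\|_{X_1^2(I)}$ (the classical admissible case, $p=2$, weight $t^0$), while a $W$-factor (in $X_1^{p_0}$) is controlled in the weighted norm $L^{Q_{p_0}(r_j)}(I,t^{\frac{1}{p_0}-\frac12}dt;L^{r_j})$ by $\|W\|_{X_1^{p_0}(I)}$. Raising the previous display to the power $p_0'$, integrating in $t$, and applying Hölder in time with the exponents $Q_2(r_j)/p_0'$ and $Q_{p_0}(r_j)/p_0'$ (which sum, together with a residual exponent $s_0$, to $1$) decouples the three factors and leaves a residual integral $\int_I t^{\gamma}\,dt$ built from the dispersive weight and the $W$-weights. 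Here (i) and (ii) diverge: when $I=[0,T]$ the weight is integrable at the origin and $\int_0^T t^{\gamma}\,dt$ gives a pure power of $|I|=T$, producing the exponents $\tfrac12-\tfrac1{p_0}$, $\tfrac34-\tfrac{3}{2p_0}$, $1-\tfrac{2}{p_0}$ according to the number of $W$-factors; when $T_1>0$ one separates the size $T_2$ of the weight from the length of $I$, bounding each extra positive power $t^{\frac12-\frac1{p_0}}$ by $T_2^{\frac12-\frac1{p_0}}$ via $t\le T_2$ and letting the remaining $L^{s_0}(I)$-mass supply the factor $|I|^{\frac14-\frac1{2p_0}}$ (and its powers), exactly reproducing the claimed $T_2^{\ast}|I|^{\ast}$ splittings.

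The hard part will be the exponent bookkeeping rather than any individual inequality. One must exhibit Hölder exponents $r_1,r_2,r_3$ such that (a) their reciprocals sum to $1/p_0'$, (b) each pair $(Q_{p_j}(r_j),r_j)$ satisfies \eqref{Strincluctioncondition} (or is admissible when $p_j=2$), and (c) the induced time exponents are summable to $1$ with a residual $s_0$ producing precisely the stated powers of $t$, including the convergence condition $\gamma>-1$ needed at the origin in case (i). Checking that such a choice exists throughout $2\le p_0<4$, and tracking the weight $t^{\frac1{p_0}-\frac12}$ accurately enough to land on the exact exponents $\tfrac12-\tfrac1{p_0},\ \tfrac14-\tfrac1{2p_0},\ \tfrac34-\tfrac3{2p_0},\ 1-\tfrac2{p_0}$ (and the $T_2$-powers in (ii)), is the real content; the analytic inputs—the dispersive estimate, Hölder, and Corollary \ref{Strinclusion}—are otherwise routine.
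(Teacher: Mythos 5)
Your two structural observations are sound: $\partial_t\mathscr{D}_{t_0}$ is indeed independent of $t_0$ (this is also how the paper unifies (i) and (ii)), and the general architecture (a pointwise bound in $x$, then H\"older and Strichartz-type bounds in $t$) is the right one. The gap is in the analytic engine. After your dispersive step $\|U(-t)f\|_{L^{p_0}}\le Ct^{\frac1{p_0}-\frac12}\|f\|_{L^{p_0'}}$, the only entries that can absorb the singular weight $t^{-(\frac12-\frac1{p_0})}$ are the $W$-entries, because Corollary \ref{Strinclusion} with $p=2$ carries the weight $t^0$; indeed no direct-time estimate of the form $\|t^{-\varepsilon}U(t)\phi\|_{L^{Q_2(r)}((0,T);L^r)}\le C\|\phi\|_{L^2}$ can hold for any $\varepsilon>0$ (test it on concentrating data $\phi_\lambda(x)=\lambda^{1/2}\phi(\lambda x)$), so the $V$-entries absorb nothing. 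Now run your H\"older step under the constraint $\sum_j 1/r_j=1/p_0'$, writing $n_V,n_W$ for the number of $V$- and $W$-entries. A short computation shows the residual H\"older exponent is forced to be $1/s_0=\frac{n_V}{2}\bigl(\frac12-\frac1{p_0}\bigr)$ \emph{independently of the choice of the $r_j$}, while each $W$-entry can absorb only $t^{-(\frac12-\frac1{p_0})/Q_{p_0}(r_j)}$ with $1/Q_{p_0}(r_j)<\frac14$ forced by \eqref{Strincluctioncondition}. Consequently, for $2<p_0<4$: in \eqref{NL5} ($n_V=0$) one gets $s_0=\infty$ while the leftover weight is $t^{-(\frac12-\frac1{p_0})/p_0}$, whose $L^\infty(0,T)$-norm is infinite; in \eqref{NL3}--\eqref{NL4} ($n_V=1$) the leftover exponent $\gamma_0$ satisfies $\gamma_0 s_0=-2\bigl(1-\sum_{W}1/Q_{p_0}(r_j)\bigr)<-1$, so $\int_0^T t^{\gamma_0 s_0}\,dt$ diverges at the origin; and in \eqref{NL1}--\eqref{NL2} the integral converges but produces the exponent $\bigl(\frac12-\frac1{p_0}\bigr)/Q_{p_0}(r_3)<\frac14\bigl(\frac12-\frac1{p_0}\bigr)$, far short of the stated $\frac12-\frac1{p_0}$ (and these precise powers of $\delta_N$ are what the $M$--$N$ bookkeeping in Steps 1--2 of Theorem \ref{keyGWP} relies on). So three of the five estimates are not obtained at all, the other two come out strictly weaker, and no choice of the $r_j$ repairs this.

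The missing idea is the paper's exact factorization identity \eqref{factorization},
\begin{equation*}
U(-t)\bigl[U(t)Z_1\cdot\overline{U(t)Z_2}\cdot U(t)Z_3\bigr]=ct^{-1}M_t\mathcal{F}^{-1}\bigl[\mathcal{F}M_tZ_1\cdot\overline{\mathcal{F}M_tZ_2}\cdot\mathcal{F}M_tZ_3\bigr],
\end{equation*}
followed by Hausdorff--Young and the identity $\mathcal{F}M_tZ_j=\overline{U(1/4t)\mathcal{F}^{-1}\overline{Z_j}}$. This is more than a dispersive bound: it rewrites each of the three factors as a free evolution at the \emph{inverted} time $1/4t$, and the weighted form of Lemma \ref{odGFS}, $\|t^{-2/q}U(1/4t)\phi\|_{L^q(\R^+;L^r)}\le C\|\hat\phi\|_{L^p}$, then lets \emph{every} factor---including the $L^2$ ones---absorb a singular weight $t^{-2/q_j}$ (dispersive decay at large inverted time becomes weight absorption at $t=0$; the substitution $s=1/4t$ turns $t^{-2}dt$ into a constant multiple of $ds$). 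With $q_j=Q_{p_j}(3p_0')$ the three factors together absorb $t^{-2(1/q_1+1/q_2+1/q_3)}$, i.e.\ all or almost all of the $t^{-1}$ singularity, leaving a residue $t^{\alpha}$ with $\alpha=0$, $\frac12-\frac1{p_0}$, $1-\frac2{p_0}$ in the three cases; its $L^{q_0}$-norm supplies the stated powers $|I|^{\alpha+1/q_0}$, and in case (ii) the elementary bound $\|t^{\alpha}\|_{L^{q_0}([T_1,T_2])}\le T_2^{\alpha}|I|^{1/q_0}$ gives the stated $T_2$--$|I|$ splitting. An argument that keeps the three factors in direct time, as yours does, cannot reproduce this mechanism.
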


\begin{proof}
We first assume $I=[0,T]$ and prove (\ref{NL1})--(\ref{NL5}).  Basically, we follow the proof of \cite[Proposition 3.1]{107slow}.  We first recall the well-known factorization formula for $U(-t)$ (see \cite[Chapter 4]{Caz}):
\begin{equation}
\label{factorization0} U(-t)=M_t^{-1}\mathcal{F}^{-1}D_t^{-1} M_t^{-1}
\end{equation}
where the operators $D_t,M_t$ are given by
\begin{equation*}
(D_tw)(x)\triangleq (4\pi i t)^{-\frac{1}{2}}w(x/4 \pi t ),\quad (M_tw)(x)\triangleq e^{\frac{ix^2}{4t}} w.
\end{equation*}
By (\ref{factorization0}) we get the following identity (\cite[(3.12)]{HN}):
\begin{equation}
U(-t)[U(t)Z_1(t)\cdot \overline{U(t)Z_2(t)} \cdot U(t)Z_3(t) ] =ct^{-1}M_t \mathcal{F}^{-1} [
\mathcal{F}M_t Z_1(t) \cdot \overline{\mathcal{F}M_t Z_2(t)} \cdot \mathcal{F}M_t Z_3(t)] \label{factorization}
\end{equation}
where $c$ is an absolute constant.
By (\ref{factorization}) and the Hausdorff-Young and H\"older inequalities we have
\begin{eqnarray*}
\|\pa_t \mathscr{D}(Z_1,Z_2,Z_3) \|_{L^{p_0}} &=& Ct^{-1} \| \mathcal{F}^{-1} [\mathcal{F} M_t Z_1(t) 
\cdot \overline{\mathcal{F}M_t Z_2(t)} \cdot \mathcal{F}M_t Z_3(t) ] \|_{L^{p_0}} \\
& \le & C t^{-1} \| \mathcal{F} M_t Z_1(t) 
\cdot \overline{\mathcal{F}M_t Z_2(t)} \cdot \mathcal{F}M_t Z_3(t)  \|_{L^{p_0'}} \\
&\le & Ct^{-1}\prod_{j=1}^3 \| \mathcal{F} M_t Z_j(t) \|_{L^{3p_0'}}.
\end{eqnarray*}
Note that since $U(1/4t)f=\mathcal{F}^{-1}M_{-t}\mathcal{F}f =\overline{\mathcal{F}M_t \overline{\mathcal{F}f} }$ we have
\begin{equation*}
\mathcal{F}M_t Z_j(t) =\overline{U(1/4t)\mathcal{F}^{-1} \overline{Z_j(t)}}.
\end{equation*}
Let $q\ge 1$.  Taking $L^q([0,T])$-norm of both sides and then using H\"older's inequality in the time variable, we get
\begin{eqnarray*}
\|\pa_t \mathcal{D}(Z_1,Z_2,Z_3) \|_{L^q([0,T] ; L^{3p_0'})} &=&  
\left\|t^{\A} \prod_{j=1}^3 t^{-2/q_j} \| U(1/4t) \mathcal{F}^{-1} \overline{Z_j(t)}  \|_{L^{3p_0'}}\right\|_{L^q([0,T]) }\\
&\le & C\| t^{\A} \|_{L^{q_0}([0,T])} \prod_{j=1}^3 
\| t^{-\frac{2}{q_j}} U(1/4t) \mathcal{F}^{-1} \overline{Z_j(t)} \|_{L^{q_j}([0,T] ; L^{3p_0'})}\\
&\le & CT^{\A+\frac{1}{q_0}} \prod_{j=1}^3 
\| t^{-\frac{2}{q_j}} U(1/4t) \mathcal{F}^{-1} \overline{Z_j(t)} \|_{L^{q_j}([0,T] ; L^{3p_0'})},
\end{eqnarray*}
where $1\le q \le \infty,\,q_0\A>-1$ and
\begin{equation}
\A-2(1/q_1+1/q_2+1/q_3)=-1,\quad 1/q=1/q_0+1/q_1+1/q_2+1/q_3.
\end{equation}
We set $\mathscr{U}(t)\triangleq t^{-\frac{2}{q_j}} U(1/4t) \mathcal{F}^{-1} $ and we write
\begin{equation*}
\overline{Z_j(t)}=\overline{Z_j(0)} +\int^t_0 (\pa_s \overline{Z_j})(s) ds.
\end{equation*}
Then we have
\begin{eqnarray*}
\| \mathscr{U}(t)\overline{Z_j(t)}\|_{L^{q_j} ([0,T] ; L^{3p_0'})}
&\le & C \| \mathscr{U}(t)\overline{Z_j(0)}\|_{L^{q_j} ([0,T] ; L^{3p_0'})}+\left\| \int^t_0
\mathscr{U}(t) (\pa_s \overline{Z_j})(s)  ds \right\|_{L^{q_j} ([0,T] ; L^{3p_0'})} \\
&\le & C \| \mathscr{U}(t)\overline{Z_j(0)}\|_{L^{q_j} ([0,T] ; L^{3p_0'})}+ \left\| \int^t_0 \left\|
\mathscr{U}(t) (\pa_s \overline{Z_j})(s)  \right\|_{L^{3p_0'}}  ds \right\|_{L^{q_j} [0,T]}\\
&\le & C \| \mathscr{U}(t)\overline{Z_j(0)}\|_{L^{q_j} ([0,T] ; L^{3p_0'})}+ \left\| \int^T_0 \left\|
\mathscr{U}(t) (\pa_s \overline{Z_j})(s)  \right\|_{L^{3p_0'}}  ds \right\|_{L^{q_j} [0,T]}\\
&\le & C \| \mathscr{U}(t)\overline{Z_j(0)}\|_{L^{q_j} ([0,T] ; L^{3p'})}+\int^T_0 \left\|
\mathscr{U}(t) (\pa_s \overline{Z_j})(s)  \right\|_{L^{q_j} ([0,T] ; L^{3p_0'})}ds.
\end{eqnarray*}
Let $p_j \in [2,4),\,j=1,2,3$.  If $q_j=Q_{p_j}(3p_0')$ and $(r,p)=(3p_0',p_j)$ satisfies the assumption of Proposition \ref{odGFS}, the right hand side
is estimated by 
\begin{equation}
C\left( \|Z_j (0) \|_{L^{p_j}}+\int^T_0 \| \pa _s Z_j (s) \|_{L^{p_j}} ds \right)\le C \|Z_j \|_{X^{{p_j}}_{1,0}(I)}.
\end{equation}
Thus we get
\begin{equation}
\| \mathscr{D}_{t_0} (Z_1, Z_2, Z_3) \|_{\tilde{X}_{q,0}^{p_0}(I)} \le C T^{\frac{\A+1}{q_0}} \prod_{j=1}^3 
\|Z_j \|_{X_{1,0}^{p_j}(I)},
\end{equation}
for $I=[0,T]$ and for suitable $q_j,p_j,$.

Now we are ready to prove the estimates in question.  Hereafter we set $q=p_0'$.  We first apply the above
argument with
\begin{eqnarray*}
Z_j=V_j\in X_{1}^2(I),\,j=1,2,\quad Z_3=W_3 \in X_{1}^{p_0}(I)
\end{eqnarray*}
and
\begin{equation*}
p_j=2,\quad q_j =Q_{2}(3p_0')=\left( \frac{1}{12}+\frac{1}{6p_0} \right)^{-1},\quad j=1,2,\quad p_3=p_0,\quad q_3=Q_{p_3}(3p_0')=3p_0'.
\end{equation*}
Then $\A=0,\,q_0^{-1}=1/2-1/p_0$ and it is easy to check that $(r,p)\triangleq (3p',p_j)$ satisfies the assumption of Proposition \ref{odGFS} .  Thus we get (\ref{NL1}).  (\ref{NL2}) can be proved in a similar manner.

Next we set
\begin{eqnarray*}
Z_1=V_1\in X_{1}^2(I),\,\quad Z_j=W_j \in X_{1}^{p_0}(I),\quad j=2,3
\end{eqnarray*}
and
\begin{equation*}
p_1=2,\quad q_1 =Q_2(3p_0'),\quad p_j=p_0,\quad q_j=Q_{p_0}(3p_0'),\quad j=2,3.
\end{equation*}
Then $\A=1/2-1/p_0,\,q_0^{-1}=1/4-1/2p_0$ and we have (\ref{NL3}).  The proof of (\ref{NL4}) is similar.
\if0
We set
\begin{equation*}
\frac{1}{q_j}=\frac{1}{2} \left( \frac{1}{6}+\frac{1}{3} \right),\,j=1,2,\quad q_3=3p_0',\,\,
\end{equation*}
Then $1/q_1+1/q_2+1/q_3=1/2$, $q_0=1/2-1/p_0,\, \A=1$, and thus we get (\ref{NL4}).  The proof of (\ref{NL5}) is similar.
\fi
Finally, we set $Z_j=W_j \in X_{1}^{p_0}(I)$ and $q_j=3p_0'$ for $j=1,2,3$, then, $q_0=\infty,\,\A=1-2/p_0$, and we get (\ref{NL5}).

The proof of the inequalities in (ii) is similar except that
we need to modify the estimate of the norm $\|t^{\A}\|_{L^{q_0}(I)}$.  Let $I=[T_1,T_2]$.  Arguing as above, we have

\begin{equation}
\| \mathscr{D}_{t_0} (Z_1, Z_2, Z_3) \|_{\tilde{X}_{q}^{p_0}(I)} \le C \|t^{\A}\|_{L^{q_0}(I)} \prod_{j=1}^3 
\|Z_j \|_{X_{1}^{p_j}(I)}.
\end{equation}

For $q_0<\infty$ we have
\begin{eqnarray*}
    \|t^{\A}\|_{L^{q_0}(I)}&=&
    \left( \int^{T_2}_{T_1} t^{\A q_0} dt \right)^{\frac{1}{q_0}} =\frac{1}{(\A q_0+1)^{\frac{1}{q_0}}} (T_2^{\A q_0+1} -T_1^{\A q_0+1} )^{\frac{1}{q_0}} \\
    &\le & T_2^{\A} (T_2-T_1)^{\frac{1}{q_0}},
\end{eqnarray*}
where we used the elementary estimate
\begin{equation*}
    b^{\gamma}-a^{\gamma}
    =\gamma \int^b_a t^{\gamma-1} dt \le \gamma b^{\gamma-1}(b-a)
\end{equation*}
for $0<a<b,\,\gamma>1.$  We get the same estimate for $q_0=\infty$ with the convention that $1/\infty=0$.  Thus we have
\begin{equation}
\| \mathscr{D} (Z_1, Z_2, Z_3) \|_{\tilde{X}_{q}^{p_0}(I)} \le C T_2^{\A} |I|^{\frac{1}{q_0}} \prod_{j=1}^3 
\|Z_j \|_{X_{1}^{p_j}(I)}.
\end{equation} 
for suitable $\A,q_0,p_j$.  Finally, choosing $Z_j,\,j=1,2,3,\,\A,\,q_j,\,j=0,1,2,3$ as in the
proof of (i), we get the estimates in (ii).
\end{proof}

\medskip

We also need $L^2$ estimates of the trilinear forms $\mathscr{D}_{t_0}$ in order to obtain a global solution.

\begin{prop} \label{nonlinearL2est}
Let $2\le p_0 <3$.  Let $I=[T_1,T_2]$ where $0\le T_1 <T_2 <\infty$ and $t_0 \in I$.  Then the following estimates hold true:
\if0
    \item Assume that $I=[0,T],\,T>0$. Then the following estimates hold true:

\begin{align}
\sup_{t\in I} \| \mathscr{D}(V_1,V_2,W_3) \|_{L^2} &\le  C| I |^{ \frac{3}{4}-\frac{1}{2p_0}}\|V_1 \|_{X_{1}^2(I)}\|V_2 \|_{X_{1}^2(I)}\|W_3 \|_{X_{1}^{p_0}(I) } \label{2NL1}\\
\sup_{t \in I}\| \mathscr{D}(V_1,W_2,V_3) \|_{L^2} &\le  C | I |^{\frac{3}{4}-\frac{1}{2p_0}} \|V_1 \|_{X_{1}^2(I)}\|W_2 \|_{X_{1}^{p_0}(I)}\|V_3 \|_{X_{1}^2(I)} \label{2NL2}\\
\sup_{t \in I} \| \mathscr{D}(V_1,W_2,W_3) \|_{L^2} &\le  C|I|^{1-\frac{1}{p_0}}\|V_1 \|_{X_{1}^2(I)}\|W_2 \|_{X_{1}^{p_0}(I)}\|W_3 \|_{X_{1}^{p_0}(I)} \label{2NL3} \\
\sup_{t \in I} \| \mathscr{D}(W_1, V_2,W_3) \|_{L^2} &\le  C|I|^{1-\frac{1}{p_0}}\|W_1 \|_{X_{1}^{p_0}(I)}\|V_2 \|_{X_{1}^2(I)}\|W_3 \|_{X_{1}^{p_0}(I)} \label{2NL4} \\
\sup_{t\in I}\| \mathscr{D}(W_1, W_2,W_3) \|_{L^2} &\le  C|I|^{\frac{5}{4}-\frac{3}{2p_0}}\|W_1 \|_{X_{1}^{p_0}(I)}\|W_2 \|_{X_{1}^{p_0}(I)}\|W_3 \|_{X_{1}^{p_0}(I)}. \label{2NL5}
\end{align}

\item Assume that $I=[T_1,T_2],\,0<T_1<T_2$.  Then the following estimates hold true:

\fi 
\begin{align}
\sup_{t \in I}\| \mathscr{D}_{t_0}(V_1,V_2,W_3) \|_{L^2} &\le  CT_2^{\frac{1}{2}-\frac{1}{p_0}  } | I |^{ \frac{1}{4}+\frac{1}{2p_0} }\|V_1 \|_{X_{1}^2(I)}\|V_2 \|_{X_{1}^2(I)}\|W_3 \|_{X_{1}^{p_0}(I) } \label{22NL1}\\
\sup_{t \in I} \| \mathscr{D}_{t_0}(V_1,W_2,V_3) \|_{L^2} &\le C T_2^{\frac{1}{2}-\frac{1}{p_0}  }| I |^{ \frac{1}{4}+\frac{1}{2p_0} } \|V_1 \|_{X_{1}^2(I)}\|W_2 \|_{X_{1}^{p_0}(I)}\|V_3 \|_{X_{1}^2(I)} \label{22NL2}\\
\sup_{t\in I}\| \mathscr{D}_{t_0}(V_1,W_2,W_3) \|_{L^2} &\le CT_2^{1-\frac{2}{p_0}  }| I |^{ \frac{1}{p_0} }\|V_1 \|_{X_{1}^2(I)}\|W_2 \|_{X_{1}^{p_0}(I)}\|W_3 \|_{X_{1}^{p_0}(I)} \label{22NL3} \\
\sup_{t\in I}\| \mathscr{D}_{t_0}(W_1, V_2,W_3) \|_{L^2} &\le  CT_2^{1-\frac{2}{p_0}  }| I |^{ \frac{1}{p_0} }\|W_1 \|_{X_{1}^p(I)}\|V_2 \|_{X_{1}^2(I)}\|W_3 \|_{X_{1}^{p_0}(I)} \label{22NL4} \\
\sup_{t\in I} \| \mathscr{D}_{t_0}(W_1, W_2,W_3) \|_{L^2} &\le  CT_2^{\frac{3}{2}-\frac{3}{p_0}  }| I |^{ -\frac{1}{4}+\frac{3}{2p_0} }\|W_1 \|_{X_{1}^{p_0}(I)}\|W_2 \|_{X_{1}^{p_0}(I)}\|W_3 \|_{X_{1}^{p_0}(I)}. \label{22NL5}
\end{align}

\end{prop}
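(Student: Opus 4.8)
The plan is to mimic the proof of Proposition \ref{trilinear1}, replacing the Hausdorff--Young step (which was needed because $U(-t)$ is not bounded on $L^{p_0}$) by Plancherel's theorem, and replacing the spatial H\"older exponent by one adapted to the target $L^2$. First I would reduce the fixed-time $L^2$ bound to a time integral: since $t_0,t\in I$,
\[
\sup_{t\in I}\|\mathscr{D}_{t_0}(Z_1,Z_2,Z_3)(t)\|_{L^2}
\le \int_I \|\pa_s \mathscr{D}(Z_1,Z_2,Z_3)\|_{L^2}\,ds .
\]
Because $U(-s)$ is unitary on $L^2$, the factorization identity (\ref{factorization}) together with Plancherel's theorem gives, with \emph{no} loss,
\[
\|\pa_s \mathscr{D}(Z_1,Z_2,Z_3)\|_{L^2}
= c\,s^{-1}\bigl\|\,\mathcal{F}M_sZ_1(s)\cdot\overline{\mathcal{F}M_sZ_2(s)}\cdot\mathcal{F}M_sZ_3(s)\,\bigr\|_{L^2}
\le c\,s^{-1}\prod_{j=1}^3\|\mathcal{F}M_sZ_j(s)\|_{L^{r_j}},
\]
where the last step is H\"older in $x$ with $\tfrac1{r_1}+\tfrac1{r_2}+\tfrac1{r_3}=\tfrac12$. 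This $\tfrac12$ (rather than $\tfrac1{p_0'}$) is the only structural change from Proposition \ref{trilinear1}.

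From here I would reuse verbatim the time machinery of Proposition \ref{trilinear1}. Writing $\mathcal{F}M_sZ_j(s)=\overline{U(1/4s)\mathcal{F}^{-1}\overline{Z_j(s)}}$, distributing the weight as $s^{-1}=s^{\A}\prod_j s^{-2/q_j}$ with $\A-2\sum_j 1/q_j=-1$, and applying H\"older in time with $1=1/q_0+\sum_j 1/q_j$, I get
\[
\int_I \|\pa_s\mathscr{D}\|_{L^2}\,ds
\le C\,\|s^{\A}\|_{L^{q_0}(I)}\prod_{j=1}^3\bigl\|\,s^{-2/q_j}U(1/4s)\mathcal{F}^{-1}\overline{Z_j(s)}\,\bigr\|_{L^{q_j}(I;L^{r_j})} .
\]
Choosing $q_j=Q_{p_j}(r_j)$ with $(r_j,p_j)$ admissible in the sense of Lemma \ref{odGFS}, the second estimate of that lemma, combined with the splitting $\overline{Z_j(s)}=\overline{Z_j(T_1)}+\int_{T_1}^s\pa_\tau\overline{Z_j}\,d\tau$ and Minkowski's inequality (exactly as in Proposition \ref{trilinear1}), bounds each factor by $C\|Z_j\|_{X_1^{p_j}(I)}$, since $\|\widehat{\mathcal{F}^{-1}\overline{Z_j}}\|_{L^{p_j}}=\|Z_j\|_{L^{p_j}}$. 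The weight is controlled by the elementary estimate $\|s^{\A}\|_{L^{q_0}(I)}\le T_2^{\A}|I|^{1/q_0}$ used in Proposition \ref{trilinear1}(ii), which is valid here because $\A\ge 0$ when $p_0\ge2$.

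It then remains to bookkeep exponents for the five cases. For a term with $k$ factors in $X_1^{p_0}$ (the $W$'s, where I set $p_j=p_0$) and $3-k$ factors in $X_1^2$ (the $V$'s, where $p_j=2$), the identities $2/q_j+1/r_j=1-1/p_j$ together with $\sum_j 1/r_j=\tfrac12$ force
\[
\sum_{j}\frac{1}{q_j}=\frac12+k\Bigl(\frac14-\frac1{2p_0}\Bigr),\qquad
\A=k\Bigl(\frac12-\frac1{p_0}\Bigr),\qquad
\frac1{q_0}=\frac12-k\Bigl(\frac14-\frac1{2p_0}\Bigr).
\]
For $k=1$ this yields the powers $T_2^{1/2-1/p_0}|I|^{1/4+1/(2p_0)}$ of (\ref{22NL1})--(\ref{22NL2}); for $k=2$ it yields $T_2^{1-2/p_0}|I|^{1/p_0}$ of (\ref{22NL3})--(\ref{22NL4}); and for $k=3$ it yields $T_2^{3/2-3/p_0}|I|^{-1/4+3/(2p_0)}$ of (\ref{22NL5}), matching all stated exponents.

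The main obstacle is verifying the admissibility condition $0<1/q_j<\min(1/4,\,1/2-1/r_j)$ of Lemma \ref{odGFS} for a consistent choice of the $r_j$, and pinning down the precise role of the hypothesis $p_0<3$. A short computation shows these conditions amount to requiring $r_j>2$ at each $V$-slot and $\tfrac12-\tfrac1{p_0}<\tfrac1{r_j}<\tfrac1{p_0}$ at each $W$-slot. Summing the lower bounds against the constraint $\sum_j 1/r_j=\tfrac12$ shows the worst case is the all-$W$ term (\ref{22NL5}), where feasibility requires $3\bigl(\tfrac12-\tfrac1{p_0}\bigr)<\tfrac12$, i.e. exactly $p_0<3$; the symmetric choice $r_j=6$ is then admissible. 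For $k=1,2$ the $V$-slots leave ample room, so $p_0<3$ is both necessary and sufficient, which is why the proposition is stated under this restriction rather than the weaker $p_0<4$ of Proposition \ref{trilinear1}.
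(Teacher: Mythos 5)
Your proposal is correct and follows essentially the same route as the paper's proof: reduce the $\sup_t$ bound to the time integral of $s^{-1}$ times the $L^2$ norm of the product, use the factorization identity with Plancherel (in place of Hausdorff--Young), apply H\"older in $x$ with exponents summing to $\tfrac12$, and then reuse the time-H\"older/Strichartz machinery and the weight estimate $\|s^{\A}\|_{L^{q_0}(I)}\le T_2^{\A}|I|^{1/q_0}$ from Proposition \ref{trilinear1}(ii). The paper simply fixes $r_1=r_2=r_3=6$ from the start (your symmetric admissible choice), and your exponent bookkeeping and the admissibility analysis forcing $p_0<3$ agree exactly with the paper's choices $q_j=Q_{p_j}(6)$.
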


\begin{proof}
Let $t \in [T_1,T_2]$.  By (\ref{factorization}), Plancherel's theorem, and H\"older's inequality, We have
\begin{align*}
    \|\mathscr{D}_{t_0}(Z_1,Z_2,Z_3)(t) \|_{L^2}
    &\le C\int^{T_2}_{T_1} 
    s^{-1} 
    \| \mathcal{F}^{-1} [\mathcal{F} M_t Z_1(t) 
\cdot \overline{\mathcal{F}M_t Z_2(t)} \cdot \mathcal{F}M_t Z_3(t) ] \|_{L^2}  ds\\
&=C \int^{T_2}_{T_1} 
    s^{-1} 
    \|  \mathcal{F} M_t Z_1(s) 
\cdot \overline{\mathcal{F}M_t Z_2(s)} \cdot \mathcal{F}M_t Z_3(s)  \|_{L^2}  ds\\
&\le C \int^{T_2}_{T_1} s^{-1} \prod_{j=1}^3
\|\mathcal{F}M_t Z_j (s)  \|_{L^6} ds \\
&\le C\| s^{\A} \|_{L^{q_0}([T_1,T_2])} \prod_{j=1}^3 
\| t^{-\frac{2}{q_j}} U(1/4s) \mathcal{F}^{-1} \overline{Z_j(s)} \|_{L^{q_j}([T_1,T_2] ; L^6)},
\end{align*}
where
\begin{equation*}
\A-2\left(\frac{1}{q_1}+\frac{1}{q_2}+\frac{1}{q_3} \right)=-1,\quad 1=\frac{1}{q_0}+\frac{1}{q_1}+\frac{1}{q_2}+\frac{1}{q_3}.
\end{equation*}
Now arguing as in the proof of Proposition \ref{trilinear1} we have
\begin{equation}
\|\mathscr{D}_{t_0}(Z_1,Z_2,Z_3)(t) \|_{L^2} \le C T_2^{\A} |I|^{\frac{1}{q_0}} \prod_{j=1}^3 
\|Z_j \|_{X_{1}^{p_j}(I)},
\end{equation} 
as long as $q_j=Q_{p_j}(6)$ and $(p,r)=(p_j,6)$ satisfies the assumption of Proposition \ref{odGFS}.  It is easy to see that the condition is fulfilled as long as $2\le p_j<3$.  We consider the following cases:
\begin{itemize}
\item 
$Z_j=V_j \in X^2_1(I),\,j=1,2$,\quad $Z_3=W_3 \in X_1^{p_0}$  and
\begin{equation*}
    p_j=2,\,\, q_j=Q_2(6)=6,\,\,j=1,2,\quad p_3=p_0,\,\, q_3 =Q_{p_0}(6)=\left( \frac{5}{12}-\frac{1}{2p_0} \right)^{-1}.
\end{equation*}
Then $\A=1/2-1/p_0$ and $1/q_0=1/4+1/2p_0.$  Hence (\ref{22NL1}). The proof of (\ref{22NL2}) is similar. 

\item 
$Z_1=V_1 \in X^2_1(I),$ \quad  $Z_j=W_j \in X_1^{p_0},\,\,j=2,3$, and
\begin{equation*}
    p_1=2,\,\, q_1=Q_2(6),\quad p_j=p_0,\,\, q_j =Q_{p_0}(6),\,\,j=2,3.
\end{equation*}
Then $\A=1-2/p_0$ and $1/q_0=1/p_0.$  Hence (\ref{22NL3}). The proof of (\ref{22NL4}) is similar. 
\item $Z_j=W_j,\,\,j=1,2,3$, and
\begin{equation*}
    p_j=p_0,\,\,q_j=Q_{p_0}(6),\,\,j=1,2,3.
\end{equation*}
Then $\A=3/2-3/p_0$ and $1/q_0=-1/4+3/2p_0$.  We get (\ref{22NL5}).
\end{itemize}

\end{proof}

Finally, we need an estimate for the $L^2$-solution of (\ref{NLS}) in the $Y^2_2$ space.

\begin{prop} \label{L2IVP}
Let $\varphi \in L^2(\R)$ and $t_0 \ge 0$.  Then 
there exist a $T_0>0$ and a solution $v \in Y^2_2([t_0,t_0+T_0])$ of the Cauchy problem

\begin{eqnarray}
\left\{ 
\begin{array}{l}
iv_t +v_{xx} +|v|^2v =0, \quad (t,x)\in [t_0, t_0+T_0] \times \R,\\
v(t_0,x) =\varphi (x),\quad x\in \R. \label{NLSL2}
\end{array}
\right. 
\end{eqnarray}

Moreover, $\|v(t,\cdot) \|_{L^2}=\|\varphi\|_{L^2}$ for all $t \in [t_0,t_0+T_0]$ and there are two absolute constants
$K_1,K_2>0$ such that $T_0=(K_2\|\varphi\|_{L^2})^{-4}$ and
\begin{equation}
\|v\|_{\tilde{Y}_{2}^2([t_0,t_0+T_0])} \le K_1 \|\varphi\|_{L^2}^3 \label{L2est}
\end{equation}
for any $\delta \in [0, (K_2 \|\varphi \|_{L^2})^{-4}]$.

\end{prop}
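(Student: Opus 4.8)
The plan is to solve the integral (Duhamel) equation by a contraction argument and then transfer the resulting solution into the space $Y^2_2$. Writing $w(t)\triangleq U(-t)v(t)$, a function $v\in Y^2_2(I)$ solves (\ref{NLSL2}) on $I=[t_0,t_0+T_0]$ if and only if $w\in X^2_2(I)$ satisfies the fixed-point equation
\[
w(t)=U(-t_0)\varphi+i\,\mathscr{D}_{t_0}(w,w,w)(t),
\]
obtained by applying $U(-t)$ to the Duhamel formula for $v$. The observation that dictates the whole structure is the identity $\partial_t\big(U(-t)v(t)\big)=iU(-t)\big[|v(t)|^2v(t)\big]$, a direct consequence of (\ref{NLSL2}) and $\partial_tU(-t)=-i\partial_x^2U(-t)$. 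Since $U(-t)$ is unitary on $L^2$, this gives $\|\partial_t w(t)\|_{L^2}=\|v(t)\|_{L^6}^3$ and hence the exact relation
\[
\|v\|_{\tilde{Y}^2_2(I)}=\Big(\int_I\|v(s)\|_{L^6}^6\,ds\Big)^{1/2}=\|v\|_{L^6(I;L^6)}^3 .
\]
Thus the claimed bound (\ref{L2est}) is the Strichartz bound $\|v\|_{L^6(I;L^6)}\le K_1^{1/3}\|\varphi\|_{L^2}$ raised to the third power, and membership $v\in Y^2_2(I)$ follows once $v\in L^6(I;L^6)$ is established.

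First I would run the contraction. The key difficulty is that the $L^6(I;L^6)$ estimate implicit in the $Y^2_2$-framework (the case $p=2$, $r=6$, $Q_2(6)=6$ of Lemma \ref{odGFS}) is scale invariant and supplies no positive power of $|I|$; to produce the subcritical lifespan $T_0=(K_2\|\varphi\|_{L^2})^{-4}$ one must use a genuinely subcritical pair. I would work in $S(I)\triangleq C(I;L^2)\cap L^8(I;L^4)$ with the admissible pair $(8,4)$ (the case $p=2$, $r=4$, $Q_2(4)=8$ of Lemma \ref{odGFS}) and its inhomogeneous counterpart from the classical $L^2$ Strichartz theory. Estimating the cubic term in the dual norm and applying H\"older in time yields
\[
\big\||v|^2v\big\|_{L^{8/7}(I;L^{4/3})}=\|v\|_{L^{24/7}(I;L^4)}^3\le |I|^{1/2}\,\|v\|_{L^8(I;L^4)}^3,
\]
so the Duhamel map contracts on a ball of $S(I)$ of radius $\sim\|\varphi\|_{L^2}$ precisely when $C|I|^{1/2}\|\varphi\|_{L^2}^2<1$, i.e. for $|I|\le(K_2\|\varphi\|_{L^2})^{-4}$. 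This produces the unique local solution with $\|v\|_{L^8(I;L^4)}\le 2C\|\varphi\|_{L^2}$.

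Next I would recover the $L^6$ bound and transfer to $Y^2_2$. Applying the mixed-pair inhomogeneous Strichartz estimate with output $(6,6)$ and input $(8/7,4/3)$, the Duhamel term is controlled by $C|I|^{1/2}\|v\|_{L^8(I;L^4)}^3$, so that
\[
\|v\|_{L^6(I;L^6)}\le C\|\varphi\|_{L^2}+C|I|^{1/2}\|v\|_{L^8(I;L^4)}^3\le C'\|\varphi\|_{L^2},
\]
using $|I|^{1/2}\|\varphi\|_{L^2}^3\sim\|\varphi\|_{L^2}$ on $I$ of length $T_0$. Combined with the identity above, this gives $\|\partial_t w\|_{L^2(I;L^2)}=\|v\|_{L^6(I;L^6)}^3<\infty$ and, via $\|w(t)-w(t')\|_{L^2}\le\int_{t'}^t\|v(s)\|_{L^6}^3\,ds$, continuity $w\in C(I;L^2)$; hence $w\in X^2_2(I)$, i.e. $v\in Y^2_2(I)\hookrightarrow C_{\mathfrak{S}}(I;L^2)$, with $\|v\|_{\tilde{Y}^2_2(I)}\le(C')^3\|\varphi\|_{L^2}^3$, which is (\ref{L2est}) with the absolute constant $K_1=(C')^3$. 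Since $\|v\|_{L^6([t_0,t_0+\delta];L^6)}$ is nondecreasing in $\delta$, the same bound holds on every $[t_0,t_0+\delta]$ with $\delta\le T_0$.

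Finally, the mass identity $\|v(t)\|_{L^2}=\|\varphi\|_{L^2}$ is the standard $L^2$-conservation law for (\ref{NLSL2}); as $v$ is only an $L^2$-solution I would justify it by regularizing the data, using the classical conservation for the resulting smooth solutions, and passing to the limit through the continuous dependence furnished by the contraction (equivalently, this is part of Tsutsumi's $L^2$ theory \cite{yT}). The main obstacle is the one flagged above: the $Y^2_2$-norm is built on the scale-invariant $L^6_{t,x}$ estimate, which alone cannot detect the subcritical lifespan; the remedy is to carry out the contraction in the auxiliary subcritical space $L^8(I;L^4)$, extract the gain $|I|^{1/2}$ by H\"older in time, and only afterwards translate back through the identity $\|v\|_{\tilde{Y}^2_2(I)}=\|v\|_{L^6(I;L^6)}^3$.
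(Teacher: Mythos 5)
Your proof is correct, but it takes a genuinely different route from the paper's. The paper runs the fixed-point argument directly inside the Zhou space: it defines $\Phi v = U(t)\varphi + i\int_0^t U(t-s)|v|^2v\,ds$ on the ball $\mathscr{V}(a,T_0)=\{v\in Y^2_2(T_0)\,:\, v(0)=\varphi,\ \|v\|_{\tilde{Y}^2_2(T_0)}\le a\}$ and closes the contraction using the trilinear estimate (\ref{NL5}) with $p_0=2$, which itself rests on the factorization identity (\ref{factorization}) and Lemma \ref{odGFS}; the subcritical gain there comes not from a subcritical Strichartz pair but from H\"older in time between the seminorms $\tilde{X}^2_1$ and $\tilde{X}^2_2$, namely $\|V\|_{X^2_1(T_0)}\le \|\varphi\|_{L^2}+T_0^{1/2}\|V\|_{\tilde{X}^2_2(T_0)}$ --- so your assertion that one \emph{must} use a genuinely subcritical pair is not literally how the paper proceeds, though your alternative mechanism is equally valid. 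You instead run the classical Tsutsumi/Cazenave--Weissler contraction in $C(I;L^2)\cap L^8(I;L^4)$ and transfer to $Y^2_2$ a posteriori through the identity $\|v\|_{\tilde{Y}^2_2(I)}=\|v\|^3_{L^6(I;L^6)}$, which follows from $\partial_t\bigl(U(-t)v(t)\bigr)=iU(-t)\bigl[|v|^2v\bigr]$ and the unitarity of $U(-t)$ on $L^2$. That identity is a clean observation, but it is special to $p=2$ (unitarity) and to the cubic power; this is precisely why the paper needs the factorization machinery for $p_0\neq 2$, although it could have been bypassed here as you do. What your route buys: it reuses the classical $L^2$ theory wholesale, so mass conservation and coincidence with Tsutsumi's solution \cite{yT} come essentially for free, rather than via the separate uniqueness-in-Strichartz-space argument the paper makes at the end of its proof. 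What the paper's route buys: it stays entirely within the $X/Y$ machinery that is needed anyway for the perturbative $w$-equation in Section 3, and it exhibits the solution directly as a fixed point in $\mathscr{V}(a,T_0)$ with the explicit constants $T_0=(K_2\|\varphi\|_{L^2})^{-4}$ and $a=K_1\|\varphi\|_{L^2}^3$ in exactly the form consumed by the subsequent global iteration.
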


\begin{proof}
Although the local existence in the $Y^2_2$ space is essentially obtained as a special case of the main theorems in \cite{107slow,Zhou} and most of the other assertions can be deduced from the fixed point argument in the proof therein, we prove it here for completeness.  We first assume $t_0=0$.

We establish a solution $v$ in a closed subset of $Y_{2}^2$.  We set
\begin{equation*}
(\Phi v)(t) \triangleq U(t) \varphi +i\int^t_0 U(t-s) |v|^2 vds
\end{equation*}
and $\mathscr{V}(a,T_0)\triangleq \{v \in Y_{2}^2(T_0)\,|\, v(0)=\varphi,\,\|v\|_{\tilde{Y}^2_{2}(T_0)} \le a\}
$ for $a,T_0 >0$.  We introduce a distance on $\mathscr{V}(a,T_0)$ by 
\begin{equation*}
    d(v_1,v_2) \triangleq \|v_1-v_2\|_{\tilde{Y}^2_2(T_0)}.
\end{equation*}
Then we show that $\Phi:\mathscr{V}(a,T_0)\to \mathscr{V}(a,T_0)$ is well defined and is a contraction mapping for a suitable choice of $a,T_0$.  Hereafter, we let
$V=U(-t)v,\,V_j=U(-t)v_j$ for $v,v_j$.  We first note that
\begin{equation*}
    U(-t)\Phi v=\varphi +c\mathscr{D}(V,V,V).
    \end{equation*}
By (\ref{NL5}) we have
\begin{eqnarray*}
\|\Phi v\|_{\tilde{Y}_{2}^2(T_0)} &=&
\|U(-t)\Phi v\|_{\tilde{X}_{2}^2(T_0)}  \\
&=&c \|\mathscr{D} (V,V,V) \|_{\tilde{X}_{2}^2(T_0)}  \\
&\le &C \| V\|_{\tilde{X}_{1}^2(T_0)}^3 \\
&=&C \| v\|_{\tilde{Y}_{1}^2(T_0)}^3.
\end{eqnarray*}
By H\"older's inequality in the time variable, we have
\begin{eqnarray*}
    \| v\|_{\tilde{Y}_{1}^2(T_0)}^3 &\le & C
    (\|\varphi \|_{L^2}+ T_0^{\frac{1}{2}} \|v\|_{\tilde{Y}^2_{2}(T_0)} )^3 \\
    &\le & 8C \|\varphi \|_{L^2}^3 +8CT_0^{\frac{3}{2}} 
    \|v\|_{\tilde{Y}_{2}^2(T_0)}^3 \\
    &\le & 8C\|\varphi \|_{L^2}^3+8CT_0^{\frac{3}{2}} a^3.
\end{eqnarray*}
Now we put
\begin{equation}
a=16C\|\varphi\|_{L^2}^3,\quad T_0=\varepsilon \|\varphi \|_{L^2}^{-4}, \label{aT0def}
\end{equation}
where $\varepsilon>0$ is a sufficiently small, absolute constant.  Then we have
\begin{eqnarray*}
    \|\Phi v\|_{\tilde{Y}_{2}^2(T_0)} &=& 8C\|\varphi \|_{L^2}^3+8\cdot 16^3 C^4 \varepsilon^{\frac{3}{2}}\|\varphi\|_{L^2}^3 \\
    &\le & \frac{a}{2}+\frac{a}{2}\\
    &=& a
\end{eqnarray*}
if $\varepsilon$ is sufficiently small.  This implies that
$\Phi : \mathscr{V}(a,T_0)\to \mathscr{V}(a,T_0)$ is well defined.  Similarly, we have for $v_1,v_2 \in \mathscr{V}(a,T_0)$
\begin{eqnarray*}
    \|\Phi v_1 -\Phi v_2 \|_{\tilde{Y}_{2}^2(T_0)}
    &=& c \|\mathscr{D}(V_1,V_1,V_1) -\mathscr{D}(V_2,V_2,V_2)
    \|_{\tilde{X}_{2}^2(T_0)} \\
    &\le & c \|\mathscr{D}(V_1-V_2,V_1,V_1) 
    \|_{\tilde{X}_{2}^2(T_0)}
    +c \|\mathscr{D}(V_2,V_1-V_2,V_1) 
    \|_{\tilde{X}_{2}^2(T_0)}\\ 
    &&+ c \|\mathscr{D}(V_1-V_2,V_1,V_1-V_2)
    \|_{\tilde{X}_{2}^2(T_0)}\\
    &\le & CT_0^{\frac{1}{2}} \|V_1-V_2\|_{\tilde{X}_{2}^2(T_0)} \\
    && \times \sum_{1\le j,k \le 2}
    (\|\varphi\|_{L^2}+T_0^{\frac{1}{2}} \|V_j\|_{\tilde{X}_{2}^2(T_0)}) 
    (\|\varphi\|_{L^2}+T_0^{\frac{1}{2}} \|V_k\|_{\tilde{X}_{2}^2(T_0)}) \\
    &\le &  8CT_0^{\frac{1}{2}}(\|\varphi\|_{L^2}^2
+ T_0 a^2)
    \|v_1-v_2\|_{\tilde{Y}_{2}^2(T_0)} \\
    &=& 8C\varepsilon^{\frac{1}{2}} (1+256C^2\varepsilon) \|v_1-v_2\|_{\tilde{Y}_{2}^2(T_0)}.
\end{eqnarray*}
Since $\varepsilon$ is small enough, we have
\begin{equation*}
    8C\varepsilon^{\frac{1}{2}} (1+256C^2\varepsilon) <\frac{1}{2},
\end{equation*}
which implies that $\Phi$ is a contraction mapping.  Consequently, we can show the existence of a local solution $v\in Y_{2}^2(T_0)$ of (\ref{NLSL2}) by the fixed point argument.  In particular, we have $v \in \mathscr{V}(a,T_0)$ and (\ref{L2est}) follows from (\ref{aT0def}).  The case where $t_0>0$ can be proved in almost the same manner; we use (\ref{NL52}) instead of (\ref{NL5}) to estimate the corresponding integral equation.  Finally, by Corollary \ref{Strinclusion} we see that
$v \in C([t_0,t_0+T] ; L^2(\R)) \cap L^8([t_0,t_0+T_0] ; L^4(\R))$ from which we see that by the uniqueness in the Strichartz space, the solution $v$ coincides with the one given by the classical result \cite{yT}.  Hence the $L^2$-conservation holds.

\end{proof}

\section{Proof of Theorem \ref{Maintheorem}}
In this section we prove the main result.  The most of the section is devoted to the proof of a global existence theorem for a data set $D_{p_0,\A}$ whose definition will be given below.  The existence part of Theorem \ref{Maintheorem} can be obtained by showing 
that $L^p$ is included in $D_{p_0,\A}$ for suitable choice of $p_0,\A$ if $p>2$ is sufficiently small.
\subsection{A key global existence theorem}

We first give the definition of the set $D_{p_0,\A}$. 
\begin{defn}
Let $\A>0$ and $1<p<\infty$.  The set $D_{p,\A}$ is 
defined as follows.\, $\phi \in D_{p,\A}$ if and only if there are sequences
$(\varphi_N)_{N>1} \subset L^2$ and $(\psi_N)_{N>1} \subset L^p$ such that
\begin{equation}
    \|\varphi_N \|_{L^2} \le C_0 N^{\A} \label{datadecomp1}
\end{equation}
and
\begin{equation}
    \|\psi_N \|_{L^p} \le \frac{C_0}{N},\label{datadecomp2}
\end{equation}
where $C_0>0$ is independent of $N$.
\end{defn}

We have the following global existence result for data in $D_{p_0,\A}$.

\begin{thm}\label{keyGWP}
    Let $2\le p_0 <3$.  Assume that
    \begin{equation}
        \A <\frac{p_0}{2(2p_0-1)}. \label{globalcond}
    \end{equation}
    Then, for any $\phi \in D_{p_0,\A}$ there exists a unique 
    global solution $u$ of (\ref{NLS}) such that 
    \begin{equation}
        u|_{[0,T]} \in Y_{2}^2(T)+Y_{p_0'}^{p_0}(T) \label{Yregularity}
    \end{equation}
    for any $T>0$, and
    \begin{equation*}
        u \in C([0,\infty) ; L^2(\R)) \cap L_{loc}^{Q_2(r)}([0,\infty) ; L^r(\R))
        +L_{loc}^{Q_{p_0(r)}} ([0,\infty) ; L^r(\R)) \label{Strregularity}    \end{equation*}
    for any $r\ge 2$ satisfying
    \begin{equation}
        0<\frac{1}{Q_{p_0
    }(r)} < \min \left( \frac{1}{4} ,\frac{1}{2}-\frac{1}{r} \right).
        \label{StrregularityScaling}
    \end{equation}
\end{thm}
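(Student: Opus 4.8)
The plan is to run the Vargas--Vega splitting inside the twisted ($L^2$) framework furnished by Proposition \ref{L2IVP}. Fix $T>0$ and $\phi\in D_{p_0,\A}$, and for $N$ large (to be chosen depending on $T$) write $\phi=\varphi_N+\psi_N$ with $\|\varphi_N\|_{L^2}\le C_0N^{\A}$ and $\|\psi_N\|_{L^{p_0}}\le C_0N^{-1}$. I would seek the solution in the form
\[
u(t)=\tilde w(t)+U(t)\psi_N,
\]
where $\tilde w$ is the $L^2$-part, with $\tilde w(0)=\varphi_N$, solving the forced equation $i\tilde w_t+\tilde w_{xx}+|\tilde w+U(t)\psi_N|^2(\tilde w+U(t)\psi_N)=0$. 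Since $U(t)\psi_N$ is the free evolution of an $L^{p_0}$ datum, its twisted variable is the constant $\psi_N$, so $U(t)\psi_N\in Y^{p_0}_{p_0'}(T)$ with $\|U(t)\psi_N\|_{Y^{p_0}_{p_0'}(T)}=\|\psi_N\|_{L^{p_0}}\le C_0N^{-1}$. Hence if I can place $\tilde w\in Y^2_2(T)$, the decomposition (\ref{Yregularity}) follows at once, and the Strichartz membership (\ref{Strregularity}) then comes from applying Corollary \ref{Strinclusion} to each summand under hypothesis (\ref{StrregularityScaling}).

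Expanding the nonlinearity, $|u|^2u=|\tilde w|^2\tilde w+R$, where $R$ collects the terms carrying at least one factor of $U(t)\psi_N$. In the twisted variables $V=U(-t)\tilde w$ and $\psi_N$, both the pure cubic term and $R$ become sums of the trilinear forms $\mathscr{D}_{t_0}$. On a subinterval $I$ the pure term $|\tilde w|^2\tilde w$ is treated exactly as in Proposition \ref{L2IVP} (the $p_0=2$ case of (\ref{NL52})), while every term of $R$ has at least one $X^{p_0}_1$-entry equal to $\psi_N$ and is therefore controlled by the $L^2$ bounds of Proposition \ref{nonlinearL2est}. I would thus set up, on each subinterval, a contraction for $\tilde w$ in $Y^2_2(I)$. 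The local existence time dictated by Proposition \ref{L2IVP} is $\sim\|\tilde w\|_{L^2}^{-4}$, so under the bootstrap hypothesis $\|\tilde w(t)\|_{L^2}\le 2C_0N^{\A}$ on $[0,T]$ I may take all subintervals of a common length $\tau\sim N^{-4\A}$ and cover $[0,T]$ by $M\sim TN^{4\A}$ of them.

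The heart of the matter, and the main obstacle, is to propagate $\|\tilde w(t)\|_{L^2}\le 2C_0N^{\A}$ across all $M$ intervals. Because $|\tilde w|^2\tilde w$ preserves the $L^2$ norm, the growth of $\|\tilde w\|_{L^2}$ comes solely from $R$, and among its terms the one decaying slowest in $N$ carries two $\tilde w$-factors and a single $U(t)\psi_N$, i.e. $\mathscr{D}_{t_0}(V,V,\psi_N)$. By (\ref{22NL1}), using $\|V\|_{X^2_1(I_j)}\le C N^{\A}$, $\|\psi_N\|_{X^{p_0}_1(I_j)}\le C N^{-1}$, $T_2\le T$ and $|I_j|\sim N^{-4\A}$, its contribution to the $L^2$ increment on one interval is $\le C_T\,N^{2\A}N^{-1}N^{-4\A(\frac14+\frac1{2p_0})}=C_T\,N^{\A-\frac{2\A}{p_0}-1}$. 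Summing over the $M\sim N^{4\A}$ intervals gives total growth $\le C_T\,N^{5\A-\frac{2\A}{p_0}-1}$. The hypothesis (\ref{globalcond}) is exactly the statement $5\A-\frac{2\A}{p_0}-1<\A$, which forces this total to be $o(N^{\A})$ as $N\to\infty$; hence for $N$ large enough depending on $T$ the bootstrap closes and $\tilde w$ extends over all of $[0,T]$ in $Y^2_2(T)$.

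Finally, since $T>0$ is arbitrary and a decomposition is available for every $N>1$, I would choose $N=N(T)$ large, solve on $[0,T]$, and use the uniqueness coming from the contraction to patch these solutions into a single global $u\in C_{\mathfrak{S}}(\R;L^2)$ satisfying (\ref{Yregularity}); the Strichartz regularity (\ref{Strregularity}) then follows from Corollary \ref{Strinclusion} applied to $\tilde w\in Y^2_2$ and to $U(t)\psi_N\in Y^{p_0}_{p_0'}$ for every admissible $r$. The remaining, routine, points are the contraction estimates for the $\tilde w$-dependent terms of $R$ — harmless since $\|\tilde w\|_{L^2}$ enters with the same small time-powers computed above while $\psi_N$ contributes the smallness $N^{-1}$ — and checking that (\ref{StrregularityScaling}) permits the value $r=6$ required by Proposition \ref{nonlinearL2est}, which is precisely where the restriction $p_0<3$ is used.
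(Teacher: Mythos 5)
Your global bookkeeping is sound and is in fact the same counting as the paper's: interval length $\sim N^{-4\A}$, dominant per-interval $L^2$ increment from $\mathscr{D}_{t_0}(V,V,\psi_N)$ of size $C_T N^{\A-\frac{2\A}{p_0}-1}$, roughly $TN^{4\A}$ intervals, and the arithmetic $5\A-\frac{2\A}{p_0}-1<\A\iff \A<\frac{p_0}{2(2p_0-1)}$, which is (\ref{globalcond}). The genuine gap is in the step you dismiss as routine: the contraction for $\tilde w$ in $Y^2_2(I_j)$. Your ansatz $u=\tilde w+U(t)\psi_N$ forces \emph{every} interaction term into the equation for the $L^2$ part, so to close $\tilde w$ in $Y^2_2$ you need trilinear estimates with target seminorm $\tilde X^2_2$ and at least one $L^{p_0}$ entry, e.g. $\|\mathscr{D}_{t_0}(V,\psi_N,V)\|_{\tilde X^2_2(I)}\lesssim\cdots$. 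Neither Proposition \ref{trilinear1} (target $\tilde X^{p_0}_{p_0'}$) nor Proposition \ref{nonlinearL2est} (which bounds $\sup_t\|\mathscr{D}_{t_0}\|_{L^2}$, i.e.\ the Duhamel integral itself, \emph{not} its time derivative) provides this, and it cannot be extracted from Lemma \ref{odGFS}: the $\tilde X^2_2$ seminorm requires $\partial_t[U(-t)(\mathrm{Duhamel})]\in L^2_t(I;L^2_x)$, and after the factorization (\ref{factorization}) and H\"older with $\sum_j 1/r_j=1/2$, the scaling relation of Lemma \ref{odGFS} forces each factor into $L^{q_j}_t$ with $q_j=Q_{p_j}(r_j)$, so the product lies only in $L^{q^*}_t$ with
\begin{equation*}
\frac{1}{q^*}=\sum_{j}\frac{1}{Q_{p_j}(r_j)}
=\frac12\left(\frac12+\frac12+\frac{1}{p_0'}-\frac12\right)
=\frac34-\frac{1}{2p_0}>\frac12 \qquad (p_0>2),
\end{equation*}
independently of the choice of the $r_j$; no H\"older in time upgrades $L^{q^*}$, $q^*<2$, to $L^2$ on an interval. (Proposition \ref{nonlinearL2est} survives because there the integrand only needs to be in $L^1_t$, i.e.\ $\sum_j 1/q_j\le 1$; Proposition \ref{trilinear1} survives because its target index is $p_0'<2$.) So the central local step of your scheme fails at the level of exponents, not of technique.

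This obstruction is exactly why the paper organizes the splitting the other way around: on each $I_j$ the $L^2$ part $v^{(j)}$ solves the \emph{unperturbed} cubic NLS (Proposition \ref{L2IVP}: $Y^2_2$ bound plus exact $L^2$ conservation, justified by identification with Tsutsumi's solution), all interaction terms are carried by the rough part $w^{(j)}$, solved in $Y^{p_0}_{p_0'}(I_j)$ where Proposition \ref{trilinear1} applies, and only at the discrete times $j\delta_N$ is the accumulated Duhamel term of the difference equation recycled into the next $L^2$ datum $\varphi^{(j+1)}_N$ — an operation requiring only the $C(I;L^2)$ bounds of Proposition \ref{nonlinearL2est}. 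Your increment count is the continuous-time shadow of this structure, but without the restructuring you cannot obtain the regularity (\ref{Yregularity}); at best your $\tilde w$ could be closed in the strictly larger space $Y^2_1$ (for which the exponent count $\sum_j 1/q_j\le 1$ does hold), which is weaker than what the theorem asserts. Two further points are glossed over: the "almost conservation" of $\|\tilde w\|_{L^2}$ for the forced equation needs justification at $L^2$ regularity (the paper sidesteps this by using exact conservation of the clean equation), and patching the solutions built for different $N=N(T)$ into one global solution needs a uniqueness statement in a common space, as in Proposition \ref{uniqueness} via $L^4_tL^6_x$, not merely uniqueness of each fixed point in its own ball.
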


\subsection{Proof of Theorem \ref{keyGWP} }
 \textbf{Notation.}  Throughout the proof we write $V(t)=U(-t)v(t),\,W(t)=U(-t)w(t),\,V_j(t)=U(-t)v_j(t),\,W_j(t)=U(-t)w_j(t),\,V^{(j)}=U(-t)v^{(j)}(t),\,W^{(j)}=U(-t) w^{(j)}(t)$ for functions 
 $v,w,v_j,w_j,v^{(j)},w^{(j)}$.  The proof of Theorem \ref{keyGWP} proceeds in 
three steps.

\textbf{Step 1}. In the first step we establish a solution is of (\ref{NLS}) on a small time interval.  Let $\phi \in D_{p_0,\A}$ then there are $(\varphi_N)_{N>1}$ and $(\psi_N)_{N>1}$ satisfying $\phi =\varphi_N +\psi_N$ and (\ref{datadecomp1}) and (\ref{datadecomp2}) for each $N>1$.  In this step we construct a solution assuming
\begin{equation}
    \| \varphi_N \|_{L^2}
    \le 2C_0N^{\A},\label{L2size}
\end{equation}
instead of (\ref{datadecomp1}).  Note that this is slightly weaker than (\ref{datadecomp1}).  We fix $N>1$ and set
\begin{equation}
    \delta_N\triangleq (M(2C_0N^{\A}))^{-4},\label{deltadef}
\end{equation}
where $M>1$ is a sufficiently large constant determined below.  The aim of this step is to construct a solution of (\ref{NLS}) on $[0,\delta_N]$.  We seek the solution $u$ in the form of $u=v+w$ where $v$ solves
\begin{equation}
iv_t+v_{xx}+|v|^2v=0,\quad v|_{t=0}=\varphi_N, \label{2NLS}
\end{equation}
and $w$ is the solution to the difference equation:
\begin{equation}
\label{DNLS} iw_t+w_{xx}+G(v,w)=0,\quad w|_{t=0}=\psi_N,
\end{equation}
where
\begin{align*}
    G(v,w)&\triangleq 
    |v+w|^2(v+w)-|v|^2v \\
    &=v^2\bar{w}+ 2v\bar{v}w+2vw\bar{w}+\bar{v}w^2+w^2\bar{w}.
\end{align*}

Clearly, $u=w+v$ solves (\ref{NLS}).  

We have the following existence result on $[0,\delta_N]$.

\begin{prop} \label{Esmallinterval}
Let $\phi \in D_{p_0,\A}$.  Then there is a local solution $u=v+w\in Y_{2}^2(\delta_N)+Y_{p_0'}^{p_0}(\delta_N)$ such that
\begin{equation*}
    \|w\|_{Y^{p_0}_{p_0'}(\delta_N)} \le N^{-1+\frac{4\A}{p_0}}
    \end{equation*}
  and  
\begin{equation*}
    u(t)=v(t)+U(t)\psi_N +i\int^t_0 U(t-s) G(v,w) ds,
\end{equation*}
    where $v,w$ are solutions of {\rm (\ref{2NLS})}, {\rm (\ref{DNLS})}, respectively.
\end{prop}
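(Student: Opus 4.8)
The plan is to build $u=v+w$ by solving the two equations (\ref{2NLS}) and (\ref{DNLS}) separately: $v$ is handled by the $L^2$-theory of Proposition \ref{L2IVP}, while $w$ is produced by a contraction argument in $Y^{p_0}_{p_0'}(\delta_N)$ resting on the nonlinear estimates of Proposition \ref{trilinear1}. First I would fix $N>1$ and apply Proposition \ref{L2IVP} with $t_0=0$ and $\varphi=\varphi_N$. Provided $M\ge K_2$, the definition (\ref{deltadef}) of $\delta_N$ together with (\ref{L2size}) guarantees $\delta_N\le T_0=(K_2\|\varphi_N\|_{L^2})^{-4}$, so the $L^2$ solution $v$ of (\ref{2NLS}) exists on $[0,\delta_N]$, conserves its $L^2$ norm, and satisfies $\|v\|_{\tilde Y^2_2(\delta_N)}\le K_1\|\varphi_N\|_{L^2}^3$. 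Writing $V=U(-t)v$ and using H\"older in time gives
\[
\|V\|_{X^2_1(\delta_N)}\le \|\varphi_N\|_{L^2}+\delta_N^{\frac12}\|v\|_{\tilde Y^2_2(\delta_N)}\le C\,C_0 N^{\A},
\]
which is the only information about $v$ I will feed into the $w$-equation.

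For $w$ I would recast (\ref{DNLS}) as the fixed point problem $w=\Phi w$ with $(\Phi w)(t)=U(t)\psi_N+i\int_0^t U(t-s)G(v,w)(s)\,ds$, and pass to $W=U(-t)w$. Expanding $G(v,w)=v^2\bar w+2v\bar v w+2vw\bar w+\bar v w^2+w^2\bar w$ and grouping each monomial into the ``two unconjugated, one conjugated'' pattern defining $\mathscr{D}$, one obtains
\[
U(-t)\Phi w=\psi_N+c\big[\mathscr{D}(V,W,V)+2\mathscr{D}(V,V,W)+2\mathscr{D}(V,W,W)+\mathscr{D}(W,V,W)+\mathscr{D}(W,W,W)\big],
\]
so the five terms are governed by exactly the estimates (\ref{NL2}), (\ref{NL1}), (\ref{NL3}), (\ref{NL4}), (\ref{NL5}), respectively. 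I would then run the contraction on the ball $\{W:\ W(0)=\psi_N,\ \|W\|_{\tilde X^{p_0}_{p_0'}(\delta_N)}\le R\}$ with $R=N^{-1+4\A/p_0}$, using the distance induced by $\|\cdot\|_{\tilde X^{p_0}_{p_0'}(\delta_N)}$; note that by H\"older $\|W\|_{X^{p_0}_1(\delta_N)}\le \|\psi_N\|_{L^{p_0}}+\delta_N^{1/p_0}R\le C\,C_0 N^{-1}$.

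The heart of the matter is the self-map and contraction check. Inserting $\|V\|_{X^2_1}\le C N^{\A}$ and $\|W\|_{X^{p_0}_1}\le C N^{-1}$ into (\ref{NL1})--(\ref{NL5}) and writing $\delta_N^{\beta}=(2C_0 M N^{\A})^{-4\beta}$, the exponents of $N$ conspire so that the leading term $\delta_N^{1/2-1/p_0}\|V\|_{X^2_1}^2\|W\|_{X^{p_0}_1}$ has size $M^{-(2-4/p_0)}N^{-1+4\A/p_0}=M^{-(2-4/p_0)}R$; that is, its ratio to $R$ is independent of $N$, the cancellation being $-4\A(\tfrac12-\tfrac1{p_0})+2\A-1=\tfrac{4\A}{p_0}-1$. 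The other four terms carry strictly smaller powers of $N$ and hence are bounded by $C\,M^{-\gamma}R$ for some $\gamma>0$ since $N>1$. Because $p_0>2$ makes each such $\gamma$ strictly positive, choosing $M$ large forces the sum below $R$, which is the self-map property; the contraction estimate is identical except that $W_1-W_2$ has zero initial data, supplying an extra $\delta_N^{1/p_0}$ and thus an additional power of $M^{-1}$. The main obstacle is precisely this exponent bookkeeping --- confirming that the delicate identity above pins the radius $R$ and that all smallness is extracted from the single large parameter $M$ (the boundary case $p_0=2$ degenerates and is covered directly by the $L^2$-theory).

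Once the fixed point $w\in Y^{p_0}_{p_0'}(\delta_N)$ is obtained, $u=v+w$ solves (\ref{NLS}) with $u(0)=\varphi_N+\psi_N=\phi$, lies in $Y^2_2(\delta_N)+Y^{p_0}_{p_0'}(\delta_N)$, and satisfies the stated integral identity by adding the Duhamel formulas for $v$ and $w$. Finally $\|w\|_{Y^{p_0}_{p_0'}(\delta_N)}\le R=N^{-1+4\A/p_0}$ follows from the ball membership, after absorbing the $O(C_0N^{-1})$ contribution of $\|\psi_N\|_{L^{p_0}}$ into the normalization of the absolute constants.
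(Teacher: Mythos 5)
Your proposal is correct and follows essentially the same route as the paper: solve (\ref{2NLS}) via Proposition \ref{L2IVP}, then run a contraction for $w$ in the ball $\|w\|_{\tilde{Y}^{p_0}_{p_0'}(\delta_N)}\le N^{-1+4\A/p_0}$ using (\ref{NL1})--(\ref{NL5}), with the same exponent cancellation $-4\A(\tfrac12-\tfrac1{p_0})+2\A-1=\tfrac{4\A}{p_0}-1$ making the leading term's ratio to the radius equal to $M^{-2+4/p_0}$, and all smallness extracted from $M$ (valid since $p_0>2$). Your observations that the remaining terms carry strictly negative extra powers of $N$, and that the contraction gains an extra $\delta_N^{1/p_0}$ from the zero initial datum of $W_1-W_2$, are exactly the paper's argument.
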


\begin{proof}
We first note that by Proposition \ref{L2IVP} a solution $v\in Y^2_2(\delta_N)$ exists and satisfies
\begin{equation}
\|V\|_{\tilde{X}_2^2(\delta_N)} =
\|v\|_{\tilde{Y}_2^2(\delta_N)} \le K_1 \|\varphi_N\|^3 \le K_1(2C_0 N^{\A})^3\label{Vest}
\end{equation}
if $2C_0M>K_2$.
Thus it is enough to establish a solution of (\ref{DNLS}).  We solve the corresponding integral equation
\begin{equation*}
w(t)=U(t)\psi_N+i\int^t_0 U(t-s) G(v,w) ds.
\end{equation*}
We find a fixed point
of the operator defined by
\begin{equation*}
(\Phi w)(t) \triangleq U(t)\psi_N+i\int^t_0 U(t-s) G(v,w) ds. 
\end{equation*}
in a closed subset 
\begin{equation*}
    \mathscr{V} \triangleq \{ w \in Y^{p_0}_{p_0'}(\delta_N)\,
    |\, w(0)=\psi_N,\quad \|w\|_{\tilde{Y}^{p_0}_{p_0'}(\delta_N)} \le 
N^{-1+\frac{4\A}{p_0}} \,\}
\end{equation*}
equipped with the distance
\begin{equation*}
    d(w_1,w_2) \triangleq \|w_1-w_2\|_{\tilde{Y}^{p_0}_{p_0'}(\delta_N)}.
\end{equation*}
We may write
\begin{equation*}
    U(-t)\Phi w(t) =\psi_N +2\mathscr{D}(V,V,W)+\mathscr{D}(V,W,V)
    +2\mathscr{D}(V,W,W)+\mathscr{D}(W,V,W)+\mathscr{D}(W,W,W),
\end{equation*}
where $\mathscr{D}$ is the trilinear form
introduced in Section \ref{function}.
Therefore, we have
\begin{eqnarray*}
    \|\Phi w \|_{\tilde{Y}^{p_0}_{p_0'}(\delta_N) } &=& \|U(-t)\Phi w \|_{\tilde{X}^{p_0}_{p_0'}} \\
    &\le & 2\bigl( \| \mathscr{D}(V,V,W) \|_{ \tilde{X}^{p_0}_{p_0'} }
    +\|\mathscr{D}(V,W,V) \|_{\tilde{X}^{p_0}_{p_0'} (\delta_N) }
    +\|\mathscr{D}(V,W,W)\|_{\tilde{X}^{p_0}_{p_0'} (\delta_N)} \\
    && +\|\mathscr{D}(W,V,W)\|_{\tilde{X}^{p_0}_{p_0'} (\delta_N)} +\|\mathscr{D}(W,W,W)\|_{\tilde{X}^{p_0}_{p_0'} (\delta_N) } \bigr).
\end{eqnarray*}
We estimate the norms in the right hand side.  We first prepare several inequalities that frequently appear below.  By (\ref{L2size}), (\ref{deltadef}), (\ref{Vest}),  and H\"older's inequality in the time variable we easily see that
\begin{align*}
\|V\|_{X_{1}^2(\delta_N)} 
&\le \|\varphi_N \|_{L^2}+ \delta_N^{\frac{1}{2}}
\|V\|_{\tilde{X}^2_{2}(\delta_N)}\\
&\le 2C_0N^{\A} +
    \left\{ \left( M(2C_0 N^{\A} )\right)^{-4}\right\}^{\frac{1}{2}}\times K_1 (2C_0 N^{\A} )^3 \\
    &\le 2C_0N^{\A} (1+M^{-2}K_1)  \\
    &\le 2C_0 N^{\A} (1+K_1).
    \end{align*}
Thus we may write
\begin{equation}
    \|V\|_{X_{1}^2(\delta_N)}  \le A_1 N^{\A},
\label{vest1}
\end{equation}
where $A_1\triangleq 2C_0 (1+K_1).$  Similarly, for $w\in \mathscr{V}$ we have
    \begin{align*}
\|W\|_{X_{1}^{p_0}(\delta_N)} 
&\le \|\psi_N \|_{L^{p_0}}+ \delta_N^{\frac{1}{p_0}}
\|W\|_{\tilde{X}^{p_0}_{p_0'}(\delta_N)} \\
&\le C_0N^{-1}+\{\left(M(2C_0N^{\A}) \right)^{-4} \}^{\frac{1}{p_0}}\times N^{-1+\frac{4\A}{p_0}} \\
&\le (C_0+2^{-\frac{4}{p_0}}C_0^{-\frac{4}{p_0}})N^{-1}
\end{align*}
and we write
\begin{equation}
   \|W\|_{X_{1}^{p_0}(\delta_N)}  \le A_2 N^{-1}, \label{west1}
\end{equation}
where $A_2 \triangleq C_0+2^{-\frac{4}{p_0}}C_0^{-\frac{4}{p_0}}$.  Next we compute powers of $\delta_N$.  Let $\beta \in \N$.  We have

\begin{align}
\delta_N^{(\frac{1}{4}-\frac{1}{2p_0})\beta} &= \{\left(M(2C_0N^{\A}) \right)^{-4} \}^{(\frac{1}{4}-\frac{1}{2p_0})\beta} \\
&= M^{(-1+\frac{2}{p_0})\beta} 
2^{(-1+\frac{2}{p_0})\beta} C_0^{(-1+\frac{2}{p_0})\beta} N^{-\beta \A +\frac{2\beta\A}{p_0}},
\end{align}
for which we write
\begin{equation}
    \delta_N = B_{\beta} M^{(-1+\frac{2}{p_0})\beta} N^{-\beta \A +\frac{2\beta\A}{p_0}},\label{deltaest1}
\end{equation}
where $B_{\beta}=2^{(-1+\frac{2}{p_0})\beta} C_0^{(-1+\frac{2}{p_0})\beta}$.
Now we are ready to estimate the norms.  Here we denote by $C_1$ a constant for which all the inequalities in Proposition \ref{trilinear1} hold.  By (\ref{NL1}), (\ref{vest1}), (\ref{west1}), and (\ref{deltaest1}) we have

\begin{align*}
    \| \mathscr{D}(V,V,W) \|_{ \tilde{X}^{p_0}_{p_0'}(\delta_N) }
    &\le C_1 \delta_N^{\frac{1}{2}-\frac{1}{p_0}} 
    \| V \|_{X_1^{2}(\delta_N)}^2 \| W \|_{X_{1}^{p_0}(\delta_N)}\\
    &\le C_1 B_2 M^{-2+\frac{4}{p_0}}   
    N^{-2\A+\frac{4\A}{p_0}} \times A_1^2N^{2\A} \times A_2 N^{ -1}\\
    &\le C_1A_1^2A_2B_2 M^{-2+\frac{4}{p_0}}   
    N^{-1+\frac{4\A}{p_0}}.
\end{align*}

Using (\ref{NL2}) instead of (\ref{NL1}) the norm $\|\mathscr{D}(V,W,V) \|_{\tilde{X}^{p_0}_{p_0'} (\delta_N) }$ can be estimated in exactly the same manner.  
Similarly, using (\ref{NL3}) or (\ref{NL4}) we see that the norms
$\|\mathscr{D}(W,W,V) \|_{\tilde{X}^{p_0}_{p_0'} (\delta_N) }$ and $\|\mathscr{D}(W,V,W) \|_{\tilde{X}^{p_0}_{p_0'} (\delta_N) }$ are smaller than 
\begin{align*}
 C_1\delta_N^{\frac{3}{4}-\frac{3}{2p_0}} 
    \| V \|_{X_1^{2}(\delta_N)} \| W \|_{X_{1}^{p_0}(\delta_N)}^2 
    &\le C_1 B_3 M^{-3+\frac{6}{p_0}}N^{-3\A+\frac{6\A}{p_0}}\times A_1 N^{\A} \times A_2^2 N^{-2}\\
    &\le C_1A_1 A_2^2B_3M^{-3+\frac{6}{p_0}}
    N^{-2\A+\frac{6\A}{p_0}-2}.
\end{align*}
Finally, for the last norm we have
\begin{align*}
      \| \mathscr{D}(W,W,W) \|_{ \tilde{X}^{p_0}_{p_0'} }
      &\le C_1 \delta_N^{1-\frac{2}{p_0}} \| W\|_{X_1^p(T)}^3 \\
    &\le C_1B_4M^{-4+\frac{8}{p_0}} 
    N^{-4\A+\frac{8\A}{p_0}}\times A_2^3N^{-3} \\
    &\le C_1  A_2^3 B_4
    M^{-4+\frac{8}{p_0}}N^{-4\A+\frac{8\A}{p_0}-3},
\end{align*}
where we used (\ref{NL5}).  Collecting these estimates, noting that $N^{-3-4\A+\frac{8\A}{p_0}},N^{-2-2\A+\frac{6\A}{p_0}} \le N^{-1+\frac{4\A}{p_0}}$ and $M^{-4+\frac{8}{p_0}},M^{-3+\frac{6}{p_0}} \le M^{-2+\frac{4}{p_0}}$ we get
\begin{align*}
   \|\Phi w \|_{\tilde{Y}^{p_0}_{p_0',0 } (\delta_N)} &\le  2C_1 \bigl( 2A_1^2A_2B_2+2A_1A_2^2B_3+A_2^3B_4 \biggr) 
   M^{-2+\frac{4}{p_0}} N^{-1+\frac{4\A}{p_0}}.
\end{align*}
Thus $\Phi :\mathscr{V}\to \mathscr{V}$ is well defined if we take $M>1$ sufficiently large so that
\begin{equation}
      2C_1 \bigl( 2A_1^2A_2B_2+2A_1A_2^2B_3+A_2^3B_4 \biggr) 
   M^{-2+\frac{4}{p_0}}<1. \label{mcond1}
\end{equation}

Similarly, we can prove that
$\Phi$ is a contraction mapping.
Let $w_1,w_2 \in \mathscr{V}$.  We estimate $\Phi w_1 -\Phi w_2$.  Since 
\begin{align*}
    G(v,w_1)-G(v,w_2)
    &= v^2 (\overline{w}_1-\overline{w}_2)+2v\overline{v}(w_1-w_2) +2v\overline{w}_1(w_1-w_2)+2vw_2
   ( \overline{w}_1-\overline{w}_2)\\
   & +\overline{v}w_1(w_1-w_2)+\overline{v}w_2(w_1-w_2)+w_1^2(\overline{w}_1-\overline{w}_2)\\
   &+
    w_1\overline{w}_2 (w_1-w_2)+
    w_2 \overline{w}_2(w_1-w_2),
\end{align*}

$U(-t)[\Phi w_1 -\Phi w_2]$ consists of the following terms: 
\begin{align*}
   & \mathscr{D}(V,W_1-W_2,V),\,\mathscr{D}(V,V,W_1-W_2),\,\mathscr{D}(V,W_i, W_1-W_2),\,\mathscr{D}(V,W_1-W_2,W_i),\\
&\mathscr{D}(W_1-W_2,V, W_i), 
\,\,\mathscr{D}(W_i,W_j,W_1-W_2),\,\mathscr{D}(W_i,W_1-W_2,W_j),
\end{align*}
where $1\le i,j \le 2$.

Before we estimate these terms, we note that
\begin{equation*}
\|W_1-W_2\|_{X_{1}^{p_0}(\delta_N)} 
\le  \delta_N^{\frac{1}{p_0}}
\|W_1-W_2\|_{\tilde{X}^{p_0}_{p_0'}(\delta_N)}
\end{equation*}
and we write
\begin{align*}
    \delta_N^{\frac{1}{2}+\beta (\frac{1}{4}-\frac{1}{2p_0})} &= \{\left(M(2C_0N^{\A}) \right)^{-4} \}^{\frac{1}{2}+\beta(\frac{1}{4}-\frac{1}{2p_0})} \\
&= M^{-2-\beta+\frac{2\beta}{p_0}} 
2^{-2-\beta+\frac{2\beta}{p_0}} C_0^{-2-\beta+\frac{2\beta}{p_0}} 
N^{-2\A-\beta \A +\frac{2\beta\A}{p_0}} \\
&= \tilde{B}_{\beta} 
M^{-2-\beta+\frac{2\beta}{p_0}}
N^{-2\A-\beta \A +\frac{2\beta\A}{p_0}},
\end{align*}
where $\tilde{B}_{\beta}\triangleq 2^{-2-\beta+\frac{2\beta}{p_0}} C_0^{-2-\beta+\frac{2\beta}{p_0}} $.

By (\ref{vest1}), (\ref{west1}), and (\ref{NL1}) we have
\begin{align*}
 \|\mathscr{D}(V,V,W_1-W_2)\|_{X_{p_0}^{p'_0}(\delta_N)} &\le C_1\delta_N^{\frac{1}{2}-\frac{1}{p_0}} 
    \| V \|_{X^2_{1}(\delta_N)}^2 \| W_1-W_2 \|_{X_{1}^{p_0}(\delta_N)}\\
    &\le  C_1\delta_N^{\frac{1}{2}} 
    \| V \|_{X_1^{2}(\delta_N)}^2 \| W_1-W_2 \|_{X_{p_0'}^{p_0}(\delta_N)}\\ 
    &\le C_1 \tilde{B}_{0} M^{-2} N^{-2\A} \times A_1^2 N^{2\A} \times
     \| W_1-W_2 \|_{X_{p_0'}^{p_0}(\delta_N)}\\
    &\le C_1 A_1^2  \tilde{B}_0 M^{-2} 
    \times  \| W_1-W_2 \|_{X_{p_0'}^{p_0}(\delta_N)}.
   \end{align*}
The norm of the term $\mathscr{D}(V,W_1-W_2,V)$ can be estimated in almost the same manner.

Next, by (\ref{NL3}), (\ref{NL4}) the norms $\|\mathscr{D}(V,W_i,W_1-W_2)\|_{ },\,\|\mathscr{D}(V,W_1-W_2,W_i)\|_{},\,
\|\mathscr{D}(W_i,V,W_1-W_2)\|_{}\, \,i=1,2$ are estimated by above by
\begin{align*}
C_1\delta_N^{\frac{3}{4}-\frac{3}{2p_0}} 
    \| V \|_{X_1^{2}(\delta_N)} 
    \| W_i \|_{X_{1}^{p_0}(\delta_N)} &\| W_1-W_2 \|_{X_{1}^{p_0}(\delta_N)}
    \le C_1\delta_N^{\frac{3}{4}-\frac{1}{2p_0}} 
    \| V \|_{X_1^{2}(\delta_N)} 
    \| W_i \|_{X_{1}^{p_0}(\delta_N)} \\
   & \qquad \qquad \qquad \quad   \times  \| W_1-W_2 \|_{X_{p_0'}^{p_0}(\delta_N)}\\  
    \le C_1 \tilde{B}_1 M^{-3+\frac{2}{p_0}} &N^{-3\A+\frac{2\A}{p_0}}  \times A_1 N^{\A}
    \times A_2 N^{-1} \times \| W_1-W_2 \|_{X_{p_0'}^{p_0}(\delta_N)} \\
    \le C_1 A_1 A_2 \tilde{B}_1 M^{-3+\frac{2}{p_0}} &N^{-2\A+\frac{2\A}{p_0}-1}  \| W_1-W_2 \|_{X_{p_0'}^{p_0}(\delta_N)} \\
    \le C_1 A_1 A_2  \tilde{B}_1  M^{-2} &  \| W_1-W_2 \|_{X_{p_0'}^{p_0}(\delta_N)}.
\end{align*}

Finally, $\|\mathscr{D}(W_i,W_j,W_1-W_2)\|_{},\,
\|\mathscr{D}(W_i,W_1-W_2,W_j)\|_{},\,1\le i,j \le 2$ are smaller than
\begin{align*}
      C_1\delta_N^{1-\frac{2}{p_0}} 
    \| W_i \|_{X_{1}^{p_0}(\delta_N)}
    \| W_j \|_{X_{1}^{p_0}(\delta_N)} &
    \| W_1-W_2 \|_{X_{1}^{p_0}(\delta_N)}
    \le  C_1\delta_N^{1-\frac{1}{p_0}} 
    \| W_i \|_{X_{1}^{p_0}(\delta_N)} 
    \| W_j \|_{X_{1}^{p_0}(\delta_N)} \\
    & \qquad \qquad \qquad \qquad \qquad   \times 
    \| W_1-W_2 \|_{X_{p_0'}^{p_0}(\delta_N)}\\
    & \le  C_1 \tilde{B}_2 M^{-4+\frac{4}{p_0}} N^{-4\A+\frac{4\A}{p_0}}\times A_2^2 N^{-2} 
    \| W_1-W_2 \|_{X_{p_0'}^{p_0}(\delta_N)}\\
    & \le C_1 A_2^2 \tilde{B}_2 M^{-4+\frac{4}{p_0}} N^{-4\A+\frac{4\A}{p_0}} 
    \| W_1-W_2 \|_{X_{p_0'}^{p_0}(\delta_N)}\\
    &\le C_1 A_2^2 B_2 M^{-2} \| W_1-W_2 \|_{X_{p_0'}^{p_0}(\delta_N)},
\end{align*}
   where we used (\ref{NL5}).
Consequently, we get
\begin{align*}
\|U(-t)\left[ \Phi w_1 -\Phi w_2 \right] \|_{\tilde{X}_{p_0'}^{p_0}(\delta_N)  }
\le (3C_1A_1^2\tilde{B}_0 +6C_1A_1A_2 \tilde{B}_1
+3C_1 A_2^2 B_2 ) M^{-2}  \| W_1-W_2 \|_{\tilde{X}_{p_0'}^{p_0}(\delta_N)},
\end{align*}
which implies
\begin{equation*}
    \| \Phi w_1 -\Phi w_2  \|_{\tilde{X}_{p_0'}^{p_0}(\delta_N)  } 
    \le \frac{1}{2} \|w_1 -w_2\|_{\tilde{X}_{p_0'}^{p_0}(\delta_N)},
\end{equation*}
if we assume
\begin{equation}
    (3C_1 A_1^2\tilde{B}_0 +6C_1A_1A_2 \tilde{B}_1
+3C_1 A_2^2 B_2 ) M^{-2}<\frac{1}{2}.\label{mcond2}
\end{equation}

Consequently, by the fixed point theorem, there is a solution $w\in \mathscr{V}$ of the (\ref{DNLS}).  In particular, the solution satisfies
\begin{equation*}
\|w\|_{\tilde{Y}^{p_0}_{p_0'}(\delta_N)}
\le N^{-1+\frac{4\A}{p_0}}.
\end{equation*}

\end{proof}

\textbf{Step 2}. 
In the next step we extend the local solution on $[0,\delta_N]$ to a
larger time interval $[0,T_N]$, where
\begin{equation*}
    T_N\sim N^{ \frac{2p_0}{3p_0-2}(2\A+1-\frac{2\A}{p_0}) }.
\end{equation*}
Hereafter we write $I_k=[(k-1)\delta_N,k\delta_N]$.  Let $u=v+w$ be the solution established in Step 1.  We consider the Cauchy problem for an unknown function $u^{(2)}$
\begin{equation}
\label{secondIVP}
\left\{
\begin{aligned}
      i(u^{(2)})_t+(u^{(2)})_{xx}+|u^{(2)}|^2u^{(2)}&=0,\quad t>\delta_N \\
    u^{(2)}|_{t=\delta_N}&=u(\delta_N)=\varphi^{(2)}_N(x)+\psi^{(2)}_N(x),
\end{aligned}
\right.
\end{equation}
where
\begin{equation*}
    \varphi^{(2)}(x)\triangleq v(\delta_N)+i\int^{\delta_N}_0 
    U(\delta_N-s) G(v,w) ds,\quad \psi^{(2)}_N(x)\triangleq U(\delta_N)\psi_N.
\end{equation*}
As in the previous step we split (\ref{secondIVP}) into the two Cauchy problems
\begin{equation}
    i(v^{(2)})_t+(v^{(2)})_{xx}+|v^{(2)}|^2v^{(2)}=0,\quad v^{(2)}|_{t=\delta_N} =\varphi^{(2)}_N
    \label{secondIVPd1}
\end{equation}
and
\begin{equation}
    iw_t^{(2)}+(w^{(2)})_{xx}+G(v^{(2)},w^{(2)})=0,\quad w^{(2)}|_{t=\delta_N} =\psi^{(2)}_N. \label{secondIVPd2}
\end{equation}
   Note that $v^{(2)}+w^{(2)}$ is a solution of (\ref{secondIVP}). Then, as we shall prove below, the argument in the previous step can apply to obtain solutions to (\ref{secondIVPd1}), (\ref{secondIVPd2}) on $I_2$.  
  To be more precise, we will see that
  \begin{equation*}
     \|\varphi^{(2)}_N \|_{L^2} \le (2C_0) N^{\A}
  \end{equation*}
  and thus there is an $L^2$-solution $v^{(2)}$ of (\ref{secondIVPd1}) on $I_2$ such that $\|v^{(2)}(t)\|_{L^2}
  =\|\varphi^{(2)}\|_{L^2},\,\forall t\in I_2$ and
  \begin{equation*}
      \|v^{(2)} \|_{\tilde{Y}_{2}^2(I_2)} =
      \|V^{(2)} \|_{\tilde{X}_{2}^2(I_2)} \le K_1 \|\varphi^{(2)}_N \|_{L^2}^3
      \le K_1(2C_0N^{\A})^3.
  \end{equation*}
  This enables us to establish a solution of (\ref{secondIVPd2}) on $I_2$ can be established in the closed subset
  \begin{equation*}
      \mathscr{V}^{(2)} \triangleq \{ w^{(2)} \in Y^{p_0}_{p_0'} (I_2)\,
      |\,w^{(2)}(\delta_N) =\psi_N^{(2)},\, \|w^{(2)} \|_{ \tilde{Y}^{p_0}_{p_0'} (I_2)} \le N^{-1+\frac{4\A}{p_0}}\,\}.
  \end{equation*}
  by the fixed point argument.  Consequently, we obtain a solution of (\ref{NLS}) on
  the extended interval $I_1\cup I_2=[0,2\delta_N]$.  Similarly, We repeat this argument to extend the local solution to the times $3\delta_N,4\delta_N,\cdots$ as long as the same fixed point argument can be applied. 
 The next proposition insists that we can repeat this procedure $n_0$ times, where
  \begin{equation}
      n_0\triangleq \text{integer part of} \,\,cN^{\frac{2p_0}{3p_0-2}(2\A+1-\frac{2\A}{p_0})} \label{repeattimes}
  \end{equation}
  where $c>0$ is an absolute constant.

  \if0
  
  We show that the argument in the first step can be applied to obtain a local solution (\ref{secondIVP}) on $[\delta_N,2\delta_N]$.  Firstly, using estimates (\ref{2NL1})--(\ref{2NL5}) and arguing as in the previous step we have
\begin{align*}
    \left\| \int^{\delta_N}_0 
    U(\delta_N-s) G(v,w) ds  \right\|_{L^2} &\le 2\biggl(
   \| \mathscr{D}(V,V,W) \|_{L^2}+\| \mathscr{D}(V,W,V) \|_{L^2}+\| \mathscr{D}(V,W,W) \|_{L^2}\\
   & +\| \mathscr{D}(W,V,W) \|_{L^2}+\| \mathscr{D}(W,W,W) \|_{L^2}\biggr) \\
   &\le  C\delta_N^{\frac{3}{4}-\frac{1}{2p_0}} 
    \| V \|_{X_1^{2,0}(\delta_N)}^2 \| W \|_{X_{1,0}^{p_0}(\delta_N)}
    +\delta_N^{\frac{3}{4}-\frac{1}{2p_0}} \| V \|_{X_1^{2,0}(\delta_N)} \| W \|_{X_{1,0}^{p_0}(\delta_N)}^2 \\
   &\le CN^{-\A+\frac{2\A}{p_0}-1}.
   \\
   \end{align*}
   Since the right hand side is smaller than $N^{\A}$ if $N$ is sufficiently large and 
   $\|v(\delta_N)\|_{L^2}=\|v(0)\|_{L^2}$ by 
   $L^2$-conservation law,
  we have
  \begin{equation*}
    (2C_0 )^{-1}N^{\A} \le \|\varphi_N^1\|_{L^2} \le (2C_0)N^{\A}.
\end{equation*}
Therefore, by Proposition \ref{} a local solution $v^{(1)}$ exists on $I_2$ and 
\begin{equation*}
 \|v\|_{\tilde{Y}_{2,0}(I_2)}= \|V\|_{\tilde{X}_{2,0}(I_2)}
    \le  K_1 \|\varphi_N\|_{L^2}^3 \le K_1
    (2C_0 N^{\A})^2.
\end{equation*}

On the other hand, the second Cauchy problem is equivalent to the integral equation
\begin{equation*}
    w(t)=U(t)\psi_N+\int^t_{\delta_N} U(t-s) G(v,w) ds.
\end{equation*}
Writing
\begin{align}
U(-t)w(t)&=\psi_N+2\mathscr{D}_{\delta_N}(V,V,W)+\mathscr{D}_{\delta_N}(V,W,V)+2\mathscr{D}_{\delta_N}(V,W,W) \\
&+\mathscr{D}_{\delta_N}(W,V,W)+\mathscr{D}_{\delta_N}(W,W,W),
\end{align}
and proceeding as in the first step, we get a solution of (\ref{secondIVPd2}) on $I_2$ such that
\begin{equation*}
    \|w\|_{\tilde{Y}_{p_0',0}^{p_0} (I_2)}=\|W\|_{\tilde{X}_{p_0',0}^{p_0} (I_2)} \le N^{-1+\frac{4\A}{p_0}}.
\end{equation*}
Note that the only difference from the first step is that one needs to use 
(\ref{NL32}), (\ref{NL42}), (\ref{NL52}) instead of (\ref{NL3}), (\ref{NL4}), (\ref{NL5}) when estimating the norms $\|\mathscr{D}_{\delta_N}(
W,W,V)\|,\,\|\mathscr{D}_{\delta_N}(W,V,W)\|,\|\mathscr{D}_{\delta_N}(W,W,W)\|$.  
For example, by (\ref{NL32}), (\ref{NL42}) $\|\mathscr{D}_{\delta_N}(
W,W,V)\|,\,\|\mathscr{D}_{\delta_N}(W,V,W)\|$ are estimated by above by
\begin{align}
&C_1(2\delta_N)^{\frac{1}{2}-\frac{1}{p_0}}\delta_N^{\frac{1}{4}-\frac{1}{2p_0}} 
    \| V \|_{X_1^{2,0}(\delta_N)} \| W \|_{X_{1,0}^{p_0}(\delta_N)}^2 \\
    &\le 2^{\frac{1}{2}-\frac{1}{p_0}} 2^{-2+\frac{6}{p_0}} C_0^{\frac{6}{p_0}}C_1 M^{ -3+\frac{6}{p_0}  } 
 (1+M^{-2}K_1) \\
 & \times
 (1+M^{-\frac{4}{p_0} } C_0^{-1-\frac{4}{p_0}
 }  
 2^{-\frac{4}{p_0}  } R )^2 N^{-2-3\A+\frac{6\A}{p_0}}.
\end{align}
The other norms can be estimated in a similar way.  Note that,the norms corresponding to
the difference such as
$\|\mathscr{D}_{\delta_N} (V,W_1-W_2, W_i)\| $ can also be treated in the same manner.  
Consequently, the same iteration procedure works in solving (\ref{secondIVP}) and we  obtain a solution of (\ref{NLS}) on the extended time interval $I_1\cup I_2=[0,2\delta_N]$.

\fi

\begin{prop}
Let $N>1$ be sufficiently large and let $n_0$ be given by {\rm(\ref{repeattimes})}.   Then there are inductively defined solutions $u^{(j)}\triangleq v^{(j)}+w^{(j)}
\in Y^2_{2}(I_j)+Y^{p_0}_{p_0'}(I_j),\,\,j=1,2,\cdots,n_0$ of the initial value problems
\begin{equation}
\label{kthIVP}
\left\{
\begin{aligned}
      iu^{(j)}_t+u^{(j)}_{xx}+|u^{(j)}|^2u^{(j)}&=0,\quad t>(j-1)\delta_N \\
    u^{(j)}|_{t=(j-1)\delta_N}&=u^{(j-1)}((j-1)\delta_N)=\varphi^{(j)}_N(x)+\psi^{(j)}_N(x),
\end{aligned}
\right.
\end{equation}
where 
\begin{equation*}
    \varphi^{(j)}(x)\triangleq v^{(j-1)}(\delta_N)+i\int^{(j-1)\delta_N}_{(j-2)\delta_N}
    U((j-1)\delta_N-s) G(v^{(j-1)},w^{(j-1)}) ds,
\end{equation*}
for $j\ge 2$ and $\varphi^{(1)}\triangleq \varphi_N$, and $\psi^{(j)}\triangleq U((j-1)\delta_N) \psi_N$.

Moreover, $u^{(j)}$ can be decomposed as $u^{(j)}= v^{(j)}+w^{(j)}
\in Y^2_{2}(I_j)+Y^{p_0}_{p_0'}(I_j)$, where
$v^{(j)}$ solves 
\begin{equation}
\label{kthIVP1} i(v^{(j)})_t+(v^{(j)})_{xx}+|v^{(j)}|^2v^{(j)}=0,\quad  v^{(j)}|_{t=(j-1)\delta_N}=\varphi^{(j)}_N,
\end{equation}
and 
\begin{equation}
\|v^{(j)}\|_{\tilde{Y}_{2}^2(I_j) } \le K_1(2C_0 N^{\A})^3, \label{kthIVP1prop}
\end{equation}
and $\|v^{(j)}(t)\|_{L^2}=\|\varphi^{(j)}\|_{L^2}\,\forall t \in I_j$, and\, $w^{(j)}$ solves 
\begin{equation}
\label{KthIVP} i(w^{(j)})_t+(w^{(j)})_{xx}+G(v^{(j)},w^{(j)})=0,\quad  w^{(j)}|_{t=(j-1)\delta_N}=\psi^{(j)}_N,
\end{equation}
and
\begin{equation}
  \| w^{(j)} \|_{ \tilde{Y}^{p_0}_{p_0'} (I_j) } \le N^{-1+\frac{4\A}{p_0}} . \label{kthIVPprop2}
\end{equation}
\end{prop}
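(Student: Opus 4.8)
The plan is to argue by induction on $j$, constructing $u^{(j)}=v^{(j)}+w^{(j)}$ on $I_j$ and propagating the two quantitative bounds (\ref{kthIVP1prop}) and (\ref{kthIVPprop2}). The base case $j=1$ is exactly Proposition \ref{Esmallinterval}, since $\|\varphi_N\|_{L^2}\le C_0N^{\A}\le 2C_0N^{\A}$. For the inductive step I assume the assertion for $1,\dots,j-1$; via H\"older in time these give the working bounds $\|V^{(k)}\|_{X_1^2(I_k)}\le A_1N^{\A}$ and $\|W^{(k)}\|_{X_1^{p_0}(I_k)}\le A_2N^{-1}$ for $k\le j-1$, exactly as in (\ref{vest1})--(\ref{west1}). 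The crux is to verify that $\|\varphi^{(j)}_N\|_{L^2}\le 2C_0N^{\A}$: once this is known, Proposition \ref{L2IVP} produces $v^{(j)}$ on $I_j$ with mass conservation $\|v^{(j)}(t)\|_{L^2}=\|\varphi^{(j)}_N\|_{L^2}$ and the bound (\ref{kthIVP1prop}) on a step of length $\delta_N\le(K_2\|\varphi^{(j)}_N\|_{L^2})^{-4}$ (valid as soon as $M\ge K_2$), and then the contraction of Step 1 produces $w^{(j)}$.

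The heart of the argument is the $L^2$ estimate. Using mass conservation of $v^{(j-1)}$ one obtains the telescoping bound
\begin{equation*}
\|\varphi^{(j)}_N\|_{L^2}\le \|\varphi_N\|_{L^2}+\sum_{k=1}^{j-1}\eta_k,\qquad \eta_k\triangleq\Big\|\int_{I_k}U(\cdot-s)G(v^{(k)},w^{(k)})\,ds\Big\|_{L^2}.
\end{equation*}
Each $\eta_k$ is controlled by the five $L^2$ trilinear estimates (\ref{22NL1})--(\ref{22NL5}) of Proposition \ref{nonlinearL2est} on $I_k$ (so $T_2\le k\delta_N$, $|I_k|=\delta_N$) together with the working bounds above. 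The dominant contribution is $\mathscr{D}_{t_0}(V,V,W)$, for which (\ref{22NL1}) gives $\eta_k\lesssim (k\delta_N)^{\frac12-\frac1{p_0}}\delta_N^{\frac14+\frac1{2p_0}}N^{2\A-1}$; evaluating the resulting Riemann sum in $k$ and inserting $\delta_N\sim N^{-4\A}$ together with the value of $n_0$ from (\ref{repeattimes}) makes $\sum_k\eta_k$ of size exactly $N^{\A}$ — this is precisely why $n_0$ is chosen as in (\ref{repeattimes}). Choosing the absolute constant $c$ in (\ref{repeattimes}) small then forces $\sum_{k<j}\eta_k\le C_0N^{\A}$, hence $\|\varphi^{(j)}_N\|_{L^2}\le 2C_0N^{\A}$; the four remaining terms are checked to contribute strictly smaller powers of $N$ under the hypothesis (\ref{globalcond}).

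With $v^{(j)}$ in hand, the construction of $w^{(j)}$ repeats the fixed point argument of Proposition \ref{Esmallinterval} essentially verbatim, except that the trilinear form is now $\mathscr{D}_{t_0}$ with $t_0=(j-1)\delta_N>0$, so one invokes the estimates (\ref{NL12})--(\ref{NL52}) from part (ii) of Proposition \ref{trilinear1} in place of (\ref{NL1})--(\ref{NL5}). The only new feature is the factors $T_2^{\frac12-\frac1{p_0}}$ and $T_2^{1-\frac2{p_0}}$ in (\ref{NL32}), (\ref{NL42}), (\ref{NL52}); since $T_2\le n_0\delta_N$ these produce extra powers $n_0^{\frac12-\frac1{p_0}}$ and $n_0^{1-\frac2{p_0}}$, and one checks that these remain dominated by the gain $M^{-2+\frac4{p_0}}$ from the surplus smallness, so the contraction constant stays below $1$ uniformly in $j\le n_0$. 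The term $\mathscr{D}_{t_0}(V,V,W)$ carries no $T_2$ factor and saturates the target $N^{-1+\frac{4\A}{p_0}}$ exactly as in Step 1, which closes the induction and yields (\ref{kthIVPprop2}).

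The main obstacle is the bookkeeping in the $L^2$ estimate: one must simultaneously exploit mass conservation of the $L^2$ part, the slow ($\sim N^{\A}$) total accumulation of the $w$-feedback into the $L^2$ norm, and the fact that the number of steps $n_0$ and the step size $\delta_N\sim N^{-4\A}$ are tuned so that the critical term saturates the available room $C_0N^{\A}$ while every subcritical term, as well as the enhanced $T_2$-factors in the $w$-contraction, stays negligible. This balancing is exactly what the sharp condition (\ref{globalcond}) encodes.
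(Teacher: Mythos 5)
Your proposal follows the paper's proof in structure: induction on $j$, the telescoping $L^2$ bound via mass conservation of the $v$-parts, control of the accumulated Duhamel contributions by the $L^2$ trilinear estimates (\ref{22NL1})--(\ref{22NL5}) with $n_0$ tuned so that the dominant $\mathscr{D}(V,V,W)$-term saturates the room $C_0N^{\A}$, and then the fixed point for $w^{(j)}$ via part (ii) of Proposition \ref{trilinear1}. However, there is one concrete flaw in your treatment of the $w^{(j)}$-contraction. You claim that the extra factors $n_0^{\frac12-\frac1{p_0}}$ and $n_0^{1-\frac2{p_0}}$, produced by $T_2^{\frac12-\frac1{p_0}}$ and $T_2^{1-\frac2{p_0}}$ in (\ref{NL32}), (\ref{NL42}), (\ref{NL52}), ``remain dominated by the gain $M^{-2+\frac{4}{p_0}}$.'' This cannot work: $M$ is a fixed absolute constant, chosen once and for all (independently of $N$) through the conditions (\ref{mcond1})--(\ref{mcond4}), whereas $n_0^{\frac12-\frac1{p_0}}\sim N^{\frac{p_0-2}{3p_0-2}\left(2\A+1-\frac{2\A}{p_0}\right)}$ is a positive power of $N$ that tends to infinity; no fixed power of $M^{-1}$ can absorb it. The mechanism the paper actually uses is that these terms carry surplus negative powers of $N$: for instance $\mathscr{D}_{t_0}(V,W,W)$ is bounded by $C k^{\frac12-\frac1{p_0}}M^{-3+\frac6{p_0}}N^{-2\A+\frac{6\A}{p_0}-2}$, and comparing with the target $M^{-2+\frac{4}{p_0}}N^{-1+\frac{4\A}{p_0}}$ the ratio contains $k^{\frac12-\frac1{p_0}}N^{-\left(2\A-\frac{2\A}{p_0}+1\right)}$, which is bounded precisely because $k\le n_0$ implies $k^{\frac12-\frac1{p_0}}\le CN^{2\A-\frac{2\A}{p_0}+1}$ --- this is inequality (\ref{J2est1}), and (\ref{J3est2}) plays the same role for the cubic-in-$W$ term. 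In other words, the very same inequalities that tune $n_0$ in the $L^2$ step are what close the contraction; the largeness of $M$ is irrelevant to absorbing the position factors.

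A second, more minor, point: you attribute the subdominance of the remaining terms $J_2$, $J_3$ in the $L^2$-sum to hypothesis (\ref{globalcond}). It in fact follows from the constraint $k\le n_0$ alone, again through (\ref{J1est1}) and its consequences; condition (\ref{globalcond}) plays no role inside this proposition and enters only in Step 3, where it guarantees $T_N=n_0\delta_N\to\infty$ as $N\to\infty$.
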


\begin{proof}  The assertion for $j=1$ was proved in Step 1.  Assume that $u^{(j)},\,j=1,\cdots,k-1$ with the desired properties are defined.  We show that $u^{(k)}$ is defined if $k\le n_0$.  We first show that $\|\varphi^{(k)} \|_{L^2} \le (2C_0)N^{\A}$.  By the definition of $\varphi^{(j)}_N,\,j=1,\cdots,k$, the $L^2$-conservation law, and (\ref{datadecomp1}), we have
\begin{align*}
    \|\varphi^{(k)}_N\|_{L^2} &\le  \| v^{(k-1)}(k\delta_N)\|_{L^2} +\left\| \int^{(k-1)\delta_N}_{(k-2)\delta_N} 
    U((k-1)\delta_N-s) G(v^{(k-1)},w^{(k-1)}) ds\right\|_{L^2} \\
    &= \|\varphi^{(k-1)}_N \|_{L^2}+\left\| \int^{(k-1)\delta_N}_{(k-2)\delta_N} 
    U((k-1)\delta_N-s) G(v^{(k-1)},w^{(k-1)}) ds\right\|_{L^2} \\
    &\le \| v^{(k-2)}((k-1)\delta_N)\|_{L^2} +\left\| \int^{(k-2)\delta_N}_{(k-3)\delta_N} 
    U((k-2)\delta_N-s) G(v^{(k-2)},w^{(k-2)}) ds\right\|_{L^2}  \\
    \qquad +&\left\| \int^{(k-1)\delta_N}_{(k-2)\delta_N} 
    U((k-1)\delta_N-s) G(v^{(k-1)},w^{(k-1)}) ds\right\|_{L^2} \\
    &=\| \varphi^{(k-2)}_N\|_{L^2} +\left\| \int^{(k-2)\delta_N}_{(k-3)\delta_N} 
    U   ((k-2)\delta_N-s) G(v^{(k-2)},w^{(k-2)}) ds\right\|_{L^2}  \\
    \qquad +&\left\| \int^{(k-1)\delta_N}_{(k-2)\delta_N} 
    U((k-1)\delta_N-s) G(v^{(k-1)},w^{(k-1)}) ds\right\|_{L^2} \\
    &\le \cdots \le \|\varphi_N\|_{L^2} +\sum_{j=1}^{k-1} 
    \left\| \int^{j\delta_N}_{(j-1)\delta_N} 
    U(j\delta_N-s) G(v^{(j)},w^{(j)}) ds\right\|_{L^2} \\
    &\le C_0 N^{\A} +\sum_{j=1}^{k-1} 
    \left\| \int^{j\delta_N}_{(j-1)\delta_N} 
    U(j\delta_N-s) G(v^{(j)},w^{(j)}) ds\right\|_{L^2}.
\end{align*}
We estimate the norms in the right hand side.  Using the unitarity property of
$U(t)$ and arguing as in the previous step we have
\begin{align*}
   &\left\| \int^{j\delta_N}_{(j-1)\delta_N} 
    U(j\delta_N-s) G(v^{(j)},w^{(j)}) ds\right\|_{L^2}
    = \left\| \int^{j\delta_N}_{(j-1)\delta_N} 
    U(-s) G(v^{(j)},w^{(j)}) ds\right\|_{L^2} \\
    & \le 2\sup_{t\in I_{j}}\biggl( \|\mathscr{D}_{(j-1)\delta_N} (V^{(j)}, V^{(j)}, W^{(j)})
    \|_{ L^2 }  +\|\mathscr{D}_{(j-1)\delta_N} (V^{(j)}, W^{(j)}, V^{(j)})
    \|_{ L^2 } \\
    &+\|\mathscr{D}_{(j-1)\delta_N} (V^{(j)}, W^{(j)}, W^{(j)})
    \|_{ L^2 }
    +\|\mathscr{D}_{(j-1)\delta_N} (W^{(j)}, V^{(j)}, W^{(j)})
    \|_{ L^2 }  \\
    &+  \|\mathscr{D}_{(j-1)\delta_N} (W^{(j)}, W^{(j)}, W^{(j)})
    \|_{ L^2 }\biggr)
\end{align*}
  Observe first that, arguing as in the 
  estimate of $\|V\|_{X_1^2(\delta_N)}$ and $\|W\|_{X_1^{p_0}(\delta_N)}$ in Step 1, using (\ref{kthIVP1prop}) and (\ref{kthIVPprop2}), we have
\begin{equation}
     \|V^{(j)} \|_{X_{1}^2(I_{j})}\le 
     A_1  N^{\A},\quad \|W^{(j)} \|_{X_{1}^{p_0} (I_j)}
     \le A_2 N^{-1}.
\end{equation}
We denote by $C_2$ a constant for which all the inequalities in Proposition \ref{nonlinearL2est}
hold.  By (\ref{22NL1}) and these estimates we get
\begin{align*}
\|\mathscr{D}_{(j-1)\delta_N} (V^{(j)}, V^{(j)}, W^{(j)})(t)
    \|_{ L^2 }  &\le 
    C_2(j\delta_N)^{\frac{1}{2}-\frac{1}{p_0}}
    \delta_N^{\frac{1}{4}+\frac{1}{2p_0}}
    \|V^{(j)} \|^2_{X_{1}^2(I_{j})}
    \|W^{(j)} \|_{X^{p_0}_{1}(I_{j})} \\
    &= C_2 j^{\frac{1}{2}-\frac{1}{p_0}} 
    \delta_N^{\frac{3}{4}-\frac{1}{2p_0}}
    \|V^{(j)} \|^2_{X_{1}^2(I_{j})}
    \|W^{(j)} \|_{X^{p_0}_{1}(I_{j})}   \\
    &\le C_2 j^{\frac{1}{2}-\frac{1}{p_0}}
    \tilde{B}_1 M^{-3+\frac{2}{p_0}} N^{-3\A+\frac{2\A}{p_0}} \times A_1^2 N^{2\A}\times A_2 N^{-1}\\
    &\le A_1^2 A_2 \tilde{B}_1 C_2 N^{-\A+\frac{2\A}{p_0}-1},
      \end{align*}
for any $t \in I_{j}$.  

Similarly, we get
\begin{equation*}
\sup_{t\in I_{j-1}}\|\mathscr{D}_{(j-1)\delta_N} (V^{(j)}, W^{(j)}, V^{(j)})(t)
    \|_{ L^2 }  \le A_1^2 A_2 \tilde{B}_1 C_2 N^{-\A+\frac{2\A}{p_0}-1}.
\end{equation*}

Next, we use (\ref{22NL3}) to obtain

\begin{align*}
\|\mathscr{D}_{(j-1)\delta_N} (V^{(j)}, W^{(j)}, W^{(j)})(t)
    \|_{ L^2 }  &\le 
    C_2(j\delta_N)^{1-\frac{2}{p_0}}
    \delta_N^{\frac{1}{p_0}}
    \|V^{(j)} \|_{X_{1}^2(I_{j})}
    \|W^{(j)} \|^2_{X^{p_0}_{1}(I_{j})} \\
    & =C_2j^{1-\frac{2}{p_0}} 
    \delta_N^{1-\frac{1}{p_0}}
     \|V^{(j)} \|_{X_{1}^2(I_{j})}
    \|W^{(j)} \|^2_{X^{p_0}_{1}(I_{j})} \\
    &\le C_2\tilde{B}_2 M^{-4+\frac{4}{p_0}}
    N^{-4\A+\frac{4\A}{p_0}}\times A_1 N^{\A} \times A_2^2 N^{-2}  \\
    &\le A_1 A_2^2 \tilde{B}_2 C_2 N^{-3\A+\frac{4\A}{p_0}-2}
    \end{align*}
for any $t \in I_{j}$.  We also get the 
same upper bound for $\|\mathscr{D}_{(j-1)\delta_N} (W^{(j)}, V^{(j)}, W^{(j)})(t)
    \|_{ L^2 } $ using (\ref{22NL4}).  Finally, by (\ref{22NL5}) we have
 \begin{align*}
\sup_{t\in I_{j}}\|\mathscr{D}_{(j-1)\delta_N} (W^{(j)}, W^{(j)}, W^{(j)})(t)
    \|_{ L^2 }  &\le 
    C_2(j\delta_N)^{\frac{3}{2}-\frac{3}{p_0}}
    \delta_N^{-\frac{1}{4}+\frac{3}{2p_0}}
    \|W^{(j)} \|^3_{X^{p_0}_{1}(I_{j})} \\
    &=  C_2 j^{\frac{3}{2}-\frac{3}{p_0}} 
    \delta_N^{\frac{5}{4}-\frac{3}{2p_0}}  \|W^{(j)} \|^3_{X^{p_0}_{1}(I_{j})} \\
    & \le C_2 j^{\frac{3}{2}-\frac{3}{p_0}}  
    \times \tilde{B}_3 M^{-5+\frac{6}{p_0}}
    N^{-5\A+\frac{6\A}{p_0}} \times A_2^3N^{-3}\\
   &\le A_3^3 \tilde{B}_3 C_2 j^{\frac{3}{2}-\frac{3}{p_0}} N^{-5\A+\frac{6\A}{p_0}-3}.
    \end{align*}   
Collecting these estimates we obtain
\begin{align*}
      \left\| \int^{j\delta_N}_{(j-1)\delta_N} 
    U(j\delta_N-s) G(v^{(j)},w^{(j)}) ds\right\|_{L^2} 
    &\le L_0
    \bigl( j^{\frac{1}{2}-\frac{1}{p_0}}N^{-\A+\frac{2\A}{p_0}-1}+  j^{1-\frac{2}{p_0}}N^{-3\A+\frac{4\A}{p_0}-2}\\
   & +  j^{\frac{3}{2}-\frac{3}{p_0}} N^{-5\A+\frac{6\A}{p_0}-3}\bigr)\\
   & \triangleq J_1+J_2+J_3.
\end{align*}
where 
\begin{equation*}
    L_0=\max (4A_1^2 A_2 \tilde{B}_1 C_2, \,\,4A_1 A_2^2 \tilde{B}_2 C_2, \,\,2A_3^3 \tilde{B}_3 C_2,\,\,1).
\end{equation*}
We estimate the sum of $J_1,J_2,J_3$.  We recall the elementary estimate
\begin{equation}
\sum_{j=1}^{k-1} j^s \le \tilde{C}_s k^{s+1} \label{sume}
\end{equation}
for $s>0$, where $\tilde{C_s}$ is a constant depending only on $s$.  In particular, we have
\begin{align}
 \sum_{j=1}^{k-1} j^{\frac{1}{2}-\frac{1}{p_0}} &\le L_1 k^{\frac{3}{2}-\frac{1}{p_0}}\label{ele1}\\
     \sum_{j=1}^{k-1} j^{1-\frac{2}{p_0}} &\le L_1 k^{2-\frac{2}{p_0}} \label{ele2}\\
      \sum_{j=1}^{k-1} j^{\frac{3}{2}-\frac{3}{p_0}} &\le L_1 k^{\frac{5}{2}-\frac{3}{p_0}},
\end{align}
for some $L_1>\max (C_0,1)$ depending only on $p_0$.
We first estimate the sum of $J_1$.  By (\ref{ele1}) we have
\begin{align}
  \sum_{j=1}^{k-1} J_1 = L_0 \sum_{j=1}^{k-1} 
j^{\frac{1}{2}-\frac{1}{p_0}}N^{-\A+\frac{2\A}{p_0}-1} \le L_0L_1 k^{\frac{3}{2}-\frac{1}{p_0}} N^{-\A+\frac{2\A}{p_0}-1}.
\end{align}
Observe that the inequality
\begin{equation}
    k^{\frac{3}{2}-\frac{1}{p_0}} N^{-\A+\frac{2\A}{p_0}-1} \le\frac{C_0}{3L_0 L_1} N^{\A} \label{J1est0}
\end{equation}
is equivalent to
\begin{equation}
k^{\frac{3}{2}-\frac{1}{p_0}} \le 
 \frac{C_0}{3L_0L_1}  N^{2\A-\frac{2\A}{p_0}+1}, \label{J1est1}
\end{equation}
and 
\begin{equation}
    k\le \left(\frac{C_0}{3L_0L_1} \right)^{\frac{2p_0}{3p_0-2}} 
   N^{\frac{2p_0}{3p_0-2} (2\A-\frac{2\A}{p_0}+1)}. \label{J1est2}
\end{equation}
This implies that 
\begin{equation*}
    \sum_{j=1}^{k-1} J_1 \le \frac{C_0 N^{\A}}{3} 
\end{equation*}
if $k\le n_0 \triangleq cN^{ \frac{2p_0}{3p_0-2} (2\A-\frac{2\A}{p_0}+1) }$ and we take a suitable $c>0$.  Next we estimate $\sum J_2,\,\sum J_3$, but these can be controlled by the sum of the first term.  To see this, write
\begin{equation*}
    \sum_{j=1}^{k-1} J_2\le L_0L_1k^{2-\frac{2}{p_0}}
    N^{-3\A+\frac{4\A}{p_0}-2}
\end{equation*}
using (\ref{ele2}) and observe that the estimate
\begin{equation}
 k^{2-\frac{2}{p_0}}
    N^{-3\A+\frac{4\A}{p_0}-2} \le 
      \frac{C_0}{3L_0L_1} k^{\frac{3}{2}-\frac{1}{p_0}} N^{-\A+\frac{2\A}{p_0}-1} \label{J2est1}
\end{equation}
is equivalent to 
\begin{equation}
k^{\frac{1}{2}-\frac{1}{p_0}} \le \frac{C_0}{3L_0 L_1}  N^{2\A-\frac{2\A}{p_0}+1}. \label{J2est1}
\end{equation}
Then, in view of (\ref{J1est1}) we easily see that (\ref{J2est1}) holds true.  Thus we have 
\begin{equation*}
    \sum_{j=1}^{k-1} J_2 \le L_0 L_1 \times \frac{C_0}{3L_0 L_1}k^{ \frac{3}{2}-\frac{1}{p_0}} N^{-\A+\frac{2\A}{p_0}-1} \le 
    \frac{C_0}{3} \times \frac{C_0}{3L_0 L_1}N^{\A} \le \frac{C_0 N^{\A}}{3},
\end{equation*}
where we used (\ref{J1est0}) in the last inequality(note that we chose $L_0,L_1$ so that $L_1>C_0,\,L_0>1$).
Similarly, writing
\begin{equation*}
    \sum_{j=1}^{k-1} J_3 \le L_0 L_1 
    k^{\frac{5}{2}-\frac{3}{p_0}} N^{-5\A+\frac{6\A}{p_0}-3}
\end{equation*}
and noting that the inequality
\begin{equation}
 k^{\frac{5}{2}-\frac{3}{p_0}} N^{-5\A+\frac{6\A}{p_0}-3} \le 
    \frac{C_0}{3L_0 L_1}  k^{\frac{3}{2}-\frac{1}{p_0}} N^{-\A+\frac{2\A}{p_0}-1} \label{J3est1}
\end{equation}
is equivalent to 
\begin{equation}
k^{1-\frac{2}{p_0}} \le \frac{C_0}{3L_0 L_1} N^{2\left(2\A-\frac{2\A}{p_0}+1\right)}, \label{J3est2}
\end{equation}
which is fullfilled by (\ref{J2est1}), we see that 
\begin{equation*}
    \sum_{j=1}^{k-1} J_3 \le L_0 L_1 \times\frac{C_0}{3L_0 L_1} k^{\frac{3}{2}-\frac{1}{p_0}} N^{-\A+\frac{2\A}{p_0}-1} \le 
    \frac{C_0 N^{\A}}{3}.
\end{equation*}
Consequently, we have
\begin{align*}
    \|\varphi^{(k)}_N\|_{L^2} &\le C_0 N^{\A} +\sum_{j=1}^{k-1} 
    \left\| \int^{j\delta_N}_{(j-1)\delta_N} 
    U(j\delta_N-s) G(v^{(j)},w^{(j)}) ds\right\|_{L^2} \\
    &\le C_0 N^{\A} +\frac{C_0 N^{\A}}{3}+\frac{C_0 N^{\A}}{3}+\frac{C_0 N^{\A}}{3} \\
    & =2C_0 N^{\A},
\end{align*}
if $k \le n_0$.  Thus, by Proposition \ref{L2IVP}, there is
a solution $v^{(k)}$ of (\ref{kthIVP1}) satisfying
(\ref{kthIVP1prop}) and the $L^2$ conservation law.  It remains to prove that $w^{(k)}$ is defined on $I_k$.  We define
\begin{equation*}
\Phi^{(k)}w^{(k)}(t)
\triangleq U(t-(k-1)\delta_N) \psi^{(k)}_N
+i\int^t_{(k-1)\delta_N} U(t-s) G(v^{(k-1)},
w^{(k-1)} ) ds
\end{equation*}
and
\begin{equation*}
    \mathscr{V}^{(k)}\triangleq \{ w^{(k)} \in Y^{p_0}_{p_0'}(I_k)\,
    |\, w^{(k)}((k-1)\delta_N)=\psi_N^{(k)},\quad \|w^{(k)}\|_{\tilde{Y}^{p_0}_{p_0'}(I_k)} \le 
N^{-1+\frac{4\A}{p_0}} \,\}.
\end{equation*}

We show that $\Phi^{(k)}:\mathscr{V}^{(k)}\to \mathscr{V}^{(k)}$ is 
well defined and is a contraction mapping.  Let $w^{(k)} \in \mathscr{V}^{(k)}$.  Since
\begin{align*}
    U(-t)\Phi^{(k)}w^{(k)} 
    (t) &= U(-(k-1)\delta_N) \psi^{(k)}_N+i\int^t_{(k-1)\delta_N}
    U(-s)G(v^{(k-1)},w^{(k-1)} ) ds \\
    &= \psi_N +2\mathscr{D}_{(k-1)\delta_N} (V,V,W)+\mathscr{D}_{(k-1)\delta_N }(V,W,V)
    +2\mathscr{D}_{ (k-1)\delta_N } (V,W,W) \\
    &+\mathscr{D}_{ (k-1)\delta_N }( W,V,W) +\mathscr{D}_{ (k-1)\delta_N}(W,W,W),
\end{align*}
the proof is almost the same as that of Step 1.  The only difference is that
one needs to use (\ref{NL12})--(\ref{NL52}) in stead of (\ref{NL1})--(\ref{NL5}).  We estimate the norms of three types of terms in the right hand side. Firstly, note that the norms
$\| \mathscr{D}(V,V,W)\|_{\tilde{X}^{p_0}_{p'_0}(I_k)}$ and 
$\| \mathscr{D}(V,W,V)\|_{\tilde{X}^{p_0}_{p'_0}(I_k)} $ are estimated in the same way as in the Step 1, since the right hand side of (\ref{NL12}) and (\ref{NL22}) depend on $|I|$ (as well as $\|V\|,\|W\|$) but not on its position on the real line. Thus
\begin{align*}
   \| \mathscr{D}(V,V,W) \|_{ \tilde{X}^{p_0}_{p_0'}(I_k) } ,\,
    \| \mathscr{D}(V,W,V) \|_{ \tilde{X}^{p_0}_{p_0'}(I_k) } 
    &\le C_1A_1^2A_2B_2 M^{-2+\frac{4}{p_0}}   
    N^{-1+\frac{4\A}{p_0}}.
\end{align*}
Next, we use (\ref{NL32}) and (\ref{NL42}) to obtain
\begin{align*}
\|\mathscr{D}_{ (k-1)\delta_N (I_k) } (V,W,W)\|_{\tilde{X}^{p_0}_{p'_0}} &\le 
C_1 (k\delta_N)^{ \frac{1}{2}-\frac{1}{p_0}}
\delta_N^{\frac{1}{4}-\frac{1}{2p_0}} 
\| V^{(k)} \|_{X^2_{1}(I_k)} \|W^{(k)} 
\|_{X_{1}^{p_0}(I_k)}^2 \\
&= C_1 k^{ \frac{1}{2}-\frac{1}{p_0}}
\delta_N^{\frac{3}{4}-\frac{3}{2p_0}} 
\| V^{(k)} \|_{X^2_{1}(I_k)} \|W^{(k)} 
\|_{X_{1}^{p_0}(I_k)}^2 \\
&\le C_1 k^{\frac{1}{2}-\frac{1}{p_0}} \times B_3 M^{-3+\frac{6}{p_0}} N^{-3\A+\frac{6\A}{p_0}} \times N^{\A} \times N^{-2}   \\
&\le A_1 A_2^2 B_3 C_1 k^{ \frac{1}{2}-\frac{1}{p_0}} M^{-3+\frac{6}{p_0}} 
N^{-2\A+\frac{6\A}{p_0}-2} .
\end{align*}
Now recalling (\ref{J2est1}) we see that the the right hand side of the above inequalities is smaller than
\begin{equation*}
    A_1 A_2^2 B_3 C_1 M^{-3+\frac{6}{p_0}}
    N^{-2\A+\frac{6\A}{p_0}-2} \times \frac{C_0}{3L_0 L_1} N^{2\A-\frac{2\A}{p_0}+1} \le\frac{ A_1A_2^2B_3C_0 C_1}{3L_0 L_1 } M^{-2+\frac{4}{p_0}} N^{-1+\frac{4\A}{p_0}}.
\end{equation*}
The estimate of
$\|\mathscr{D}_{ (k-1)\delta_N } (W,V,W)\|_{\tilde{X}^{p_0}_{p'_0}(I_k)}$ is similar.  Finally, 
by (\ref{NL52}) and (\ref{J3est2})  we have
\begin{align*}
\|\mathscr{D}_{ (k-1)\delta_N } (W,W,W)\|_{\tilde{X}^{p_0}_{p'_0}(I_k)} &\le 
C_1 k^{ 1-\frac{2}{p_0}} \delta_N^{ 1-\frac{2}{p_0}} \|W^{(k)} 
\|_{X_{1}^{p_0}(I_k)}^3 \\
&\le C_1 k^{1-\frac{2}{p_0}} \times B_4 M^{-4+\frac{8}{p_0}} N^{-4\A+\frac{8\A}{p_0}} 
\times A_2 N^{-3}\\
&\le A_2^3 B_4 C_1 k^{ 1-\frac{2}{p_0}} M^{-4+\frac{8}{p_0}}
N^{-4\A+\frac{8\A}{p_0}-3  } \\
&\le A_2^3 B_4 C_1 M^{-4+\frac{8}{p_0}}
N^{-4\A+\frac{8\A}{p_0}-3  } \times \frac{C_0}{3L_0 L_1}N^{4\A-\frac{4\A}{p_0}+2} \\
&\le \frac{A_2^3 B_4 C_0 C_1  }{3L_0 L_1}  M^{-2+\frac{4}{p_0}} N^{-1+\frac{4\A}{p_0}} .
\end{align*}
Therefore we get
\begin{align*}
    \|\Phi^{(k)} w^{(k)} \|_{Y^{p_0}_{p_0'} (I_k)} 
    \le 2\left(
    2A_1^2A_2B_2C_1 +\frac{2A_1A_2^2B_3C_0 C_1  }{3L_0 L_1} +\frac{A_2^3B_4C_0C_1}{3L_0 L_1} \right) M^{-2+\frac{4}{p_0}} N^{-1+\frac{4\A}{p_0}},
\end{align*}
which implies $\Phi^{(k)} :\mathscr{V}^{(k)} \to\mathscr{V}^{(k)}$ is well defined if one takes $M>1$ large enough so that
\begin{equation}
    2 \left(
    2A_1^2A_2B_2 C_1 +\frac{2A_1A_2^2B_3C_0 C_1  }{3L_0 L_1 } +\frac{A_2^3B_4C_0C_1}{3L_0 L_1} \right) M^{-2+\frac{4}{p_0}}<1. \label{mcond3}
\end{equation}
In much the same manner, we can show that
$\Phi^{(k)}$ is a contraction mapping.  For $w_1,w_2 \in \mathscr{V}^{(k)}$ we write $\Phi^{(k)}w_1-\Phi^{(k)}w_2$ in terms of $\mathscr{D}_{(k-1)\delta_N}$ as in Step 1.  We first have
\begin{equation*}
    \|\mathscr{D}_{ (k-1)\delta_N } (V^{(k)},V^{(k)},W_1^{(k)}-W_2^{(k)})\|_{\tilde{X}^{p_0}_{p'_0}(I_k)} \le 
    C_1 A_1^2  \tilde{B}_0 M^{-2}  \| W_1^{(k)}-W_2^{(k)} \|_{X_{p_0'}^{p_0}(I_k)}.
\end{equation*}
The norm $\|\mathscr{D}_{ (k-1)\delta_N } (V,W_1-W_2,V^{(k)})\|_{\tilde{X}^{p_0}_{p'_0}}$ is also smaller than the right hand side of the above inequality.  For the norms $\|\mathscr{D}(W_i^{(k)},V^{(k)},W_1^{(k)}-W^{(k)}_2)\|_{}$,\,
$\|\mathscr{D}(V^{(k)},W^{(k)}_1-W^{(k)}_2,W^{(k)}_i)\|_{}$,\, $\|\mathscr{D}(V^{(k)},W^{(k)}_i,W^{(k)}_1-W^{(k)}_2)\|_{ },\,i=1,2$, these are smaller than
\begin{align*}
C_1 (k\delta_N)^{\frac{1}{2}-\frac{1}{p_0}}& \delta_N^{\frac{1}{4}-\frac{1}{2p_0}} \|V^{(k)} \|_{ \tilde{X}_{1}^{2}(I_k)} \|W_i^{(k)} \|_{\tilde{X}_{1}^{p_0}(I_k) } \| W_1-W_2 \|_{\tilde{X}_{1}^{p_0}(I_k)} \\
&\le k^{\frac{1}{2}-\frac{1}{p_0}} \delta_N^{\frac{3}{4}-\frac{1}{2p_0}}
\|V^{(k)} \|_{ \tilde{X}_{1}^{2}(I_k)} \|W_i^{(k)} \|_{\tilde{X}_{1}^{p_0}(I_k) } \| W_1-W_2 \|_{\tilde{X}_{p'_0}^{p_0}(I_k)}\\
&\le A_1 A_2  \tilde{B}_1 C_1 M^{-3+\frac{2}{p_0}} N^{-2\A+\frac{2\A}{p_0}-1} k^{\frac{1}{2}-\frac{1}{p_0}} \| W_1-W_2 \|_{\tilde{X}_{p'_0}^{p_0}(I_k)}\\
&\le \frac{A_1 A_2  \tilde{B}_1 C_0 C_1 }{3L_0 L_1} M^{-2} \| W_1-W_2 \|_{\tilde{X}_{p'_0}^{p_0}(I_k)},
\end{align*}
where we used (\ref{J2est1}) in the last inequality.
Finally, we have
\begin{align*}
\|\mathscr{D}_{ (k-1)\delta_N } (W^{(k)}_i,W^{(k)}_j,W_1^{(k)}-W^{(k)}_2)\|_{\tilde{X}^{p_0}_{p'_0}(I_k)} &\le C_1 (k\delta_N)^{1-\frac{2}{p_0}} \|W^{(k)}_i \|_{\tilde{X}_{1}^{p_0}(I_k)  }
\|W^{(k)}_j \|_{ \tilde{X}_{1}^{p_0}(I_k) } \\
 &\times \|W^{(k)}_1-W^{(k)}_2 \|_{ \tilde{X}_{1}^{p_0}(I_k) }\\
 \le C_1  k^{1-\frac{2}{p_0}} \delta_N^{1-\frac{1}{p_0}} 
 \|W^{(k)}_i \|_{\tilde{X}_{1}^{p_0}(I_k)  } &
\|W^{(k)}_j \|_{ \tilde{X}_{1}^{p_0}(I_k) } 
\|W^{(k)}_1-W^{(k)}_2 \|_{ \tilde{X}_{p_0'}^{p_0}(I_k) }\\
\le A_2^2 C_1 \tilde{B}_2 k^{1-\frac{2}{p_0}}  M^{-4+\frac{4}{p_0}}  N^{-4\A+\frac{4\A}{p_0}-2}   & \|W^{(k)}_1-W^{(k)}_2 \|_{ \tilde{X}_{p_0'}^{p_0}(I_k) } \\
\le  \frac{A^2_2 \tilde{B}_2C_0C_1 }{3L_0 L_1}  M^{-2} 
&\|W^{(k)}_1-W^{(k)}_2 \|_{ \tilde{X}_{p_0'}^{p_0}(I_k) },\\ 
\end{align*}
where (\ref{J3est2}) was used in the last inequality.  $\|\mathscr{D}_{ (k-1)\delta_N } (W^{(k)}_i,W_1^{(k)}-W^{(k)}_2,W^{(k)}_j)\|_{\tilde{X}^{p_0}_{p'_0}(I_k)} $ can also be estimated in exactly the same way.

We gather these estimates to obtain
\begin{equation*}
    \|\Phi^{(k)}w_1 -\Phi^{(k)}w_2 \|_{\tilde{Y}_{p'_0}^{p_0} (I_k) }
    \le \left( 3A_1^2 \tilde{B}_0 C_1
    +\frac{2A_1A_2\tilde{B}_1 C_0 C_1 }{L_0 L_1}
    +\frac{A_2^2\tilde{B}_2 C_0C_1}{L_0 L_1}\right)M^{-2} \|w_1-w_2\|_{\tilde{Y}_{p'_0}^{p_0} (I_k) }.
\end{equation*}
This implies that $\Phi^{(k)}$ is a contraction mapping if we $M$ is taken 
so that
\begin{equation}
   \left( 3A_1^2 \tilde{B}_0C_1 
    +\frac{2A_1A_2\tilde{B}_1 C_0 C_1 }{L_0 L_1}
    +\frac{A_2^2\tilde{B}_2 C_0C_1}{L_0 L_1}\right)M^{-2}<\frac{1}{2}. \label{mcond4}
\end{equation}
Consequently, the map $\Phi^{(k)} :\mathscr{V}^{(k)} \to \mathscr{V}^{(k)}$ is well-defined and is a contraction mapping if we assume (\ref{mcond1}), (\ref{mcond2}), (\ref{mcond3}), (\ref{mcond4}).

Therefore, the local solution in Step 1 can be extended to the time
\begin{equation}
T_N\triangleq k_0\delta_N =cN^{-4\A} N^{\frac{2p_0}{3p_0-2} (2\A+1-\frac{2\A}{p_0})}
    =cN^{\frac{-8p_0\A+4\A+2p }{3p_0-2} },
    \end{equation}
which concludes Step 2.

\end{proof}

\begin{rem}
If needed, we go back to Step 1 and choose $M>1$ so that it satisfies (\ref{mcond3})-(\ref{mcond4}) in addition to (\ref{mcond1})--(\ref{mcond2}).  Then, at each step, the solution can be extended by the same time length $\delta_N$.
\end{rem}

\textbf{Step 3}. We have shown that, for any sufficiently large $N>1$, there is a local solution $u \in Y^2_{2}(T_N) +Y^{p_0}_{p_0'}(T_N)$ of (\ref{NLS}) on the time interval $[0,T_N]$.  Note that the solution $u$ is defined for each $N$.  So we denote it by $u^{(N)}$ here.  Then, observe that $T_N \to \infty$ as $N \to \infty$ if 
\begin{equation*}
    -8p_0\A+4\A+2p_0>0
    \end{equation*}
    which is equivalent to (\ref{globalcond}).  Therefore, in this case, we see that, for any large $T>0$, there is $N(T)>0$ such that the local solution $u^{(N(T))}$ whose life span reaches $T$, and one may expect that a global solution of (\ref{NLS}) can be constructed using these local solutions.  To ensure the global existence we need the following uniqueness result:
\begin{prop} \label{uniqueness} 
Let $N_1,N_2>1$ be sufficiently large.  Then $u^{(N_1)}(t)=u^{(N_2)}(t)$ 
for any $t \in [0,\min (T_{N_1}, T_{N_2})]$.
\end{prop}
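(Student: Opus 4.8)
The plan is to prove the statement by establishing \emph{local} uniqueness at any time at which the two solutions already agree, and then propagating it by a standard open--closed continuation argument. Write $u_1=u^{(N_1)}$, $u_2=u^{(N_2)}$, $T=\min(T_{N_1},T_{N_2})$, and set
$$
A\triangleq\{t_*\in[0,T]\,:\,u_1(t)=u_2(t)\ \text{for all}\ t\in[0,t_*]\}.
$$
Since each $u_j$ depends continuously on $t$ (as an $L^2+L^{p_0}$-valued, hence $\mathcal{S}'$-valued, function) the set $A$ is closed; it is an interval containing $0$ because $u_1(0)=u_2(0)=\phi$. Everything then reduces to showing that $A$ is relatively open, i.e. that whenever $u_1(t_0)=u_2(t_0)$ there is a $\tau>0$ with $u_1\equiv u_2$ on $[t_0,t_0+\tau]$; maximality of $\sup A$ forces $\sup A=T$.

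For the local uniqueness, fix $t_0$ with $u_1(t_0)=u_2(t_0)$ and choose $\tau>0$ so small that $[t_0,t_0+\tau]$ lies inside a single construction subinterval of each solution; there both decompose as $u_j=v_j+w_j$ with $v_j\in Y_2^2$, $w_j\in Y_{p_0'}^{p_0}$ and norms bounded by the (fixed) constants of Step~2. Writing $V_j=U(-t)v_j$ and $W_j=U(-t)w_j$, the crucial structural observation is that, because $u_1(t_0)=u_2(t_0)$,
$$
(V_1-V_2)(t_0)=(W_2-W_1)(t_0)\in L^2\cap L^{p_0}:
$$
the left-hand side lies in $L^2$ (a difference of twisted $L^2$-parts, since $U(-t_0)$ preserves $L^2$), while the right-hand side lies in $L^{p_0}$ (a difference of twisted $L^{p_0}$-parts). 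This is precisely what is needed to upgrade the difference of the $L^2$-parts into the $L^{p_0}$-framework: since $(V_1-V_2)(t_0)\in L^{p_0}$, and since on the subinterval each $v_j$ solves the cubic $L^2$-equation so that $\partial_t(V_1-V_2)=iU(-t)(|v_1|^2v_1-|v_2|^2v_2)$ is controlled in $\tilde X_{p_0'}^{p_0}$ by the trilinear estimate applied to the purely $L^2$ inputs $v_1,v_2$, we obtain $v_1-v_2\in Y_{p_0'}^{p_0}([t_0,t_0+\tau])$. Consequently the full difference $D\triangleq u_1-u_2=(v_1-v_2)+(w_1-w_2)$ lies in the \emph{single} space $Y_{p_0'}^{p_0}([t_0,t_0+\tau])$, with $D(t_0)=0$.

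With $D$ in one space we close a self-improving estimate. Using $\partial_t(U(-t)D)=iU(-t)(|u_1|^2u_1-|u_2|^2u_2)$, expanding the cubic difference into its three telescoping terms, and splitting each factor $u_j$ into its $V_j$- and $W_j$-parts, one writes $U(-t)D$ as a finite sum of trilinear forms $\mathscr{D}_{t_0}(\cdot,\cdot,\cdot)$ in which exactly one slot carries the difference $D$ (always measured in $X_1^{p_0}$) while the other two carry $V_j\in X_1^2$ or $W_j\in X_1^{p_0}$. Applying Proposition \ref{trilinear1}(ii) — together with the purely-$L^2$ form $\mathscr{D}(V,V,V)$ and the mixed forms $\mathscr{D}(W,V,V)$, $\mathscr{D}(W,W,V)$ not listed there, which follow verbatim from the same proof since only the \emph{number} of $L^{p_0}$-type slots fixes the power of $|I|$ — and invoking $\|U(-t)D\|_{X_1^{p_0}}\le\tau^{1/p_0}\|D\|_{\tilde Y_{p_0'}^{p_0}}$ (valid because $D(t_0)=0$), every term carries a strictly positive power of $\tau$ times the bounded norms of $u_1,u_2$, the least favorable being $\tau^{1/p_0}$. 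Hence
$$
\|D\|_{\tilde{Y}^{p_0}_{p_0'}([t_0,t_0+\tau])}\le C\,\tau^{1/p_0}\,\|D\|_{\tilde{Y}^{p_0}_{p_0'}([t_0,t_0+\tau])},
$$
and shrinking $\tau$ makes the constant $<1$, forcing $\|D\|_{\tilde{Y}^{p_0}_{p_0'}}=0$; with $D(t_0)=0$ this gives $D\equiv0$ on $[t_0,t_0+\tau]$, which is the desired relative openness.

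The main obstacle is exactly that $u_1$ and $u_2$ are built from \emph{different} data decompositions whose $L^2$-parts $\varphi_{N_1},\varphi_{N_2}$ are large and unequal, so a priori the difference lives only in the sum space $Y_2^2+Y_{p_0'}^{p_0}$, where no single contraction is available. The identity $(V_1-V_2)(t_0)=(W_2-W_1)(t_0)\in L^2\cap L^{p_0}$ at every point of agreement is what unlocks the argument, placing the whole difference in $Y_{p_0'}^{p_0}$; verifying the resulting upgrade $v_1-v_2\in Y_{p_0'}^{p_0}$ through the trilinear estimate with $L^2$ inputs is the technical heart of the proof.
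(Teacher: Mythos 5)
Your proof is correct in substance, but it takes a genuinely different route from the paper, and it is worth seeing what each approach costs and buys. The paper does not work in the $Y$-spaces at all for uniqueness: it observes via Corollary \ref{Strinclusion} (with $r=6$, $Q_2(6)=6$, $Q_{p_0}(6)\le Q_3(6)=4$ hence both exponents at least $4$ on a finite interval) that any solution in $Y^2_{2}(T)+Y^{p_0}_{p_0'}(T)$ lies in the single classical Strichartz space $L^4([0,T];L^6)$, and then runs the standard uniqueness argument there using only the admissible-pair Strichartz estimate and H\"older, iterating over small subintervals. This is shorter, requires no nonlinear estimates beyond those already stated, and --- importantly --- proves uniqueness for \emph{arbitrary} solutions in the class $Y^2_{2}(T)+Y^{p_0}_{p_0'}(T)$ with the same data, which is what the word ``unique'' in Theorem \ref{keyGWP} actually demands. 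Your argument instead stays inside the $Y$-framework: the open--closed continuation, the structural identity $(V_1-V_2)(t_0)=(W_2-W_1)(t_0)\in L^2\cap L^{p_0}$, the upgrade of $v_1-v_2$ into $Y^{p_0}_{p_0'}$, and the contraction driven by $\|U(-t)D\|_{X^{p_0}_1}\le\tau^{1/p_0}\|D\|_{\tilde{X}^{p_0}_{p_0'}}$ are all sound. Two caveats. First, you correctly flag that you need trilinear estimates not listed in Proposition \ref{trilinear1}; these do follow from the paper's scheme (positions of slots are irrelevant, only the multiset of types enters), but for the purely-$L^2$ form $\mathscr{D}(V,V,V)$ measured in $\tilde{X}^{p_0}_{p_0'}$ the time exponent is $\alpha=\frac{1}{p_0}-\frac{1}{2}<0$, so the paper's bound $\|t^{\alpha}\|_{L^{q_0}(I)}\le T_2^{\alpha}|I|^{1/q_0}$ (proved only for $\alpha q_0+1>1$) does not apply verbatim; one must instead check $q_0\alpha=-2/3>-1$ and use $\|t^{\alpha}\|_{L^{q_0}([T_1,T_2])}\le CT_2^{\alpha+1/q_0}$, which is finite and suffices since that step needs only finiteness, not smallness. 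Second, because your argument leans on the piecewise decomposition produced by Steps 1--2, it yields only the coincidence of the two \emph{constructed} solutions $u^{(N_1)},u^{(N_2)}$ --- enough for the proposition as stated, but strictly weaker than the class-wide uniqueness the paper's Strichartz-space argument delivers in one stroke.
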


\begin{proof}
    Let $T>0$.  It is enough to show that $u_1(t)=u_2(t),\,\forall t \in[0,T]$ for two solutions $u_1$ and $u_2$ in $Y^2_{2}(T)+Y^{p_0}_{p_0'}(T)$ with $u_1(0)=u_2(0)$.  By Corollary \ref{Strinclusion} we have
    \begin{equation*}
        u_j \in L^{Q_2(r)}([0,T] ; L^r) +L^{Q_{p_0}(r)}([0,T] ;L^r),
    \end{equation*}
    for $r$ satisfying (\ref{StrregularityScaling}).  Now it is easy to see that $r=6$ satisfies (\ref{StrregularityScaling}) for any $p_0\in (2,3)$.  Thus, noting 
    that $Q_3(6)=4$ and the fact that $Q_{p_1} (r) \le Q_{p_2} (r)$ for $p_1 \ge p_2$, 
    we have
    \begin{equation*}
        u_j \in L^{Q_2(6)}([0,T] ; L^6) +L^{Q_{p_0}(6)}([0,T] ;L^6)
        \subset L^4([0,T] ; L^6).
    \end{equation*}
Thus it is enough to show the uniqueness in $L^4([0,T] ; L^6)$. 

We estimate the difference of the integral equations
\begin{equation*}
    u_j(t)=U(t)u_j (0) +i \int^t_0 U(t-s) |u_j (s)|^2 u_j(s) ds,\quad j=1,2.
\end{equation*}
We take $T_0 \in (0,T)$ and set 
\begin{equation*}
    \eta_T \triangleq \max 
( \|u_1 \|_{L^4([0,T] ; L^6)}, \|u_2 \|_{L^4([0,T] ; L^6)}).
\end{equation*}  Writing
\begin{align*}
    |u_1|^2u_1 -|u_2|^2u_2 &= u_1^2 (\bar{u}_1-\bar{u}_2) +u_1 \bar{u}_2 (u_1-u_2)
    +|u_2|^2 (u_1-u_2),
\end{align*}
and applying the standard Strichartz estimate and H\"older's inequality, we get
\begin{align*}
\|u_1 -u_2 \|_{L^{4}([0,T_0]; L^6   )} &\le T_0^{\frac{1}{12}} \|u_1 -u_2 \|_{L^{6}([0,T_0]; L^6)} \\
&=T_0^{\frac{1}{12}} \left\| \int^t_0 U(t-\tau) \left( |u_1|^2u_1 -|u_2|^2 u_2  \right) \right\|_{L^{6}([0,T_0]; L^6   )} \\
&\le C T_0^{\frac{1}{12}} \left\||u_1|^2u_1 -|u_2|^2 u_2 \right\|_{L^{1}([0,T_0]; L^2   )}\\
&\le CT_0^{\frac{1}{12}} \biggl( \|u_1^2 (\bar{u}_1-\bar{u}_2) \|_{L^1([0,T_0] ;L^2 )} + \| u_1 \bar{u}_2 (u_1-u_2) \|_{L^1([0,T_0] ;L^2 )} \\ 
& \qquad  + \||u_2|^2 (u_1-u_2)   \|_{L^1([0,T_0] ;L^2 )} \biggr) \\
\le &CT_0^{\frac{1}{3}}\left ( \|u_1\|^2_{L^4([0,T] ;L^6 )  } + \|u_1\|_{ L^4([0,T] ;L^6 )}
\|u_2\|_{L^4([0,T] ;L^6 ) } +\|u_2\|_{L^4([0,T] ;L^6 ) }^2\right) \\
& \qquad \times \| u_1 -u_2 \|_{  L^4([0,T_0] ;L^6 )} \\
& \le 3CT_0^{\frac{1}{3}} \eta_T^2 \| u_1 -u_2 \|_{  L^4([0,T_0] ;L^6 )}.
\end{align*}
Then if we take $T_0$ so that
\begin{equation*}
    3CT_0^{\frac{7}{12}-\frac{1}{p}} \eta_T^2<1,
    \end{equation*}
 we have $u_1(t) =u_2(t),\,\forall t\in [0,T_0]$.  Repeating a similar argument, the uniqueness assertion can be extended to $[0,T]$.

\end{proof}

Now we define $u(t) \triangleq u^{(N)} (t)$ if $t \in [0, T_N]$.  Then, by the above uniqueness theorem $u(t)$ is well defined on $[0,\infty)$ if $p_0,\A$ satisfies (\ref{globalcond}) and we
get a global solution of (\ref{NLS}).  The regularity in the Strichartz spaces follows immediately from Corollary \ref{Strinclusion}. This completes the proof of Theorem \ref{keyGWP}.

\subsection{Proof of Theorem \ref{Maintheorem}}  The existence part of Theorem \ref{Maintheorem} is obtained as a corollary of Theorem \ref{keyGWP}.  This is due to the following proposition:
\begin{prop} \label{LPinclude}
Let $p_0>2$ and $\theta \in (0,1)$.  
Let $p_{\theta}$ be given by
\begin{equation}
    \frac{1}{p_{\theta}}=\frac{1-\theta}{p_0}+\frac{\theta}{2} \label{LPinterpolation}
\end{equation}
for some $\theta \in (0,1)$.
Then,
\begin{equation*}
L^{p_{\theta}} \subset D_{p_0,\frac{1-\theta}{\theta}}.
\end{equation*}

\end{prop}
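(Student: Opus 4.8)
The plan is to realize the abstract membership $\phi\in D_{p_0,(1-\theta)/\theta}$ by a single level-set decomposition of $\phi$ carried out at a height that depends on the parameter $N$. First I would record the elementary but decisive ordering of exponents: since $p_0>2$, the relation \eqref{LPinterpolation} exhibits $1/p_\theta$ as a convex combination of $1/p_0$ and $1/2$ with weights $1-\theta$ and $\theta$, so that $2<p_\theta<p_0$. Moreover the two identities
\begin{equation*}
\frac{1}{p_\theta}-\frac{1}{2}=(1-\theta)\left(\frac{1}{p_0}-\frac{1}{2}\right),\qquad \frac{1}{p_\theta}-\frac{1}{p_0}=\theta\left(\frac{1}{2}-\frac{1}{p_0}\right)
\end{equation*}
are the only facts about $\theta$ that will be needed, and they follow at once from \eqref{LPinterpolation}.

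Next, for each $N>1$ I would fix a threshold $\lambda_N>0$ to be chosen and split
\begin{equation*}
\phi=\varphi_N+\psi_N,\qquad \varphi_N\triangleq \phi\,\mathbf{1}_{\{|\phi|>\lambda_N\}},\quad \psi_N\triangleq \phi\,\mathbf{1}_{\{|\phi|\le\lambda_N\}}.
\end{equation*}
On the set $\{|\phi|>\lambda_N\}$ one has $|\phi|^{2-p_\theta}\le \lambda_N^{2-p_\theta}$ because $2-p_\theta<0$, so that
\begin{equation*}
\|\varphi_N\|_{L^2}^2=\int_{\{|\phi|>\lambda_N\}}|\phi|^{p_\theta}|\phi|^{2-p_\theta}\,dx\le \lambda_N^{2-p_\theta}\|\phi\|_{L^{p_\theta}}^{p_\theta},
\end{equation*}
while on $\{|\phi|\le\lambda_N\}$ one has $|\phi|^{p_0-p_\theta}\le \lambda_N^{p_0-p_\theta}$ because $p_0-p_\theta>0$, giving
\begin{equation*}
\|\psi_N\|_{L^{p_0}}^{p_0}=\int_{\{|\phi|\le\lambda_N\}}|\phi|^{p_\theta}|\phi|^{p_0-p_\theta}\,dx\le \lambda_N^{p_0-p_\theta}\|\phi\|_{L^{p_\theta}}^{p_\theta}.
\end{equation*}
This is the classical chopping estimate and uses nothing beyond $2<p_\theta<p_0$; in particular $\varphi_N\in L^2$ and $\psi_N\in L^{p_0}$.

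Finally, I would calibrate the height by setting $\lambda_N\triangleq N^{-p_0/(p_0-p_\theta)}$, so that the second estimate becomes $\|\psi_N\|_{L^{p_0}}\le \|\phi\|_{L^{p_\theta}}^{p_\theta/p_0}\,N^{-1}$, which is exactly \eqref{datadecomp2}. Substituting the same $\lambda_N$ into the first estimate yields $\|\varphi_N\|_{L^2}\le \|\phi\|_{L^{p_\theta}}^{p_\theta/2}\,N^{\A}$ with
\begin{equation*}
\A=\frac{p_0}{p_0-p_\theta}\cdot\frac{p_\theta-2}{2}=\frac{\tfrac{1}{2}-\tfrac{1}{p_\theta}}{\tfrac{1}{p_\theta}-\tfrac{1}{p_0}},
\end{equation*}
and the two identities recorded above collapse this ratio to precisely $(1-\theta)/\theta$, matching \eqref{datadecomp1}. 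Taking the $N$-independent constant $C_0\triangleq \|\phi\|_{L^{p_\theta}}^{p_\theta/2}+\|\phi\|_{L^{p_\theta}}^{p_\theta/p_0}$ then secures both \eqref{datadecomp1} and \eqref{datadecomp2}, so $\phi\in D_{p_0,(1-\theta)/\theta}$. I expect no genuine analytic obstacle here: the entire difficulty is bookkeeping, namely calibrating $\lambda_N$ against $N$ and verifying that the resulting power of $N$ in the $L^2$ bound is exactly the index $(1-\theta)/\theta$ demanded by the definition of $D_{p_0,\A}$, so the one place to be careful is this exponent arithmetic.
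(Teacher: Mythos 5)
Your proof is correct, and it reaches the conclusion by a genuinely more self-contained route than the paper. The paper invokes an abstract decomposition lemma (Lemma \ref{interpolation}, cited from \cite{HTT,Triebel}): for each $t>0$ it provides a splitting $\phi=\phi^{(p_0)}_t+\phi^{(2)}_t$ with $c\,t^{-\theta}\max\bigl(\|\phi^{(p_0)}_t\|_{L^{p_0}},\,t\|\phi^{(2)}_t\|_{L^2}\bigr)\le\|\phi\|_{L^{p_\theta}}$, and then selects the parameter \emph{implicitly}, choosing $t_N$ so that $\|\phi^{(2)}_{t_N}\|_{L^2}=N^{\frac{1-\theta}{\theta}}$ exactly; this forces a preliminary case distinction (the case $\phi\in L^2$ is handled separately) and rests on the claim that $t\mapsto\|\phi^{(2)}_t\|_{L^2}$ blows up and attains the prescribed value, a continuity/monotonicity point the paper glosses over (indeed, as stated, the blow-up direction in $t$ is not even consistent with the displayed inequality). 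You instead perform the truncation explicitly, $\varphi_N=\phi\,\mathbf{1}_{\{|\phi|>\lambda_N\}}$, $\psi_N=\phi\,\mathbf{1}_{\{|\phi|\le\lambda_N\}}$, with the explicit threshold $\lambda_N=N^{-p_0/(p_0-p_\theta)}$; your Chebyshev-type estimates are valid since $2<p_\theta<p_0$, and the exponent arithmetic is right: the resulting power is $\frac{p_0(p_\theta-2)}{2(p_0-p_\theta)}=\bigl(\tfrac12-\tfrac1{p_\theta}\bigr)\big/\bigl(\tfrac1{p_\theta}-\tfrac1{p_0}\bigr)=\tfrac{1-\theta}{\theta}$, so \eqref{datadecomp1} and \eqref{datadecomp2} hold with $C_0=\|\phi\|_{L^{p_\theta}}^{p_\theta/2}+\|\phi\|_{L^{p_\theta}}^{p_\theta/p_0}$. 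In effect you have re-proved, in the only case needed, the level-set decomposition that lies under the hood of the cited lemma. What each approach buys: the paper's is shorter on the page given the reference and is stated in a form that transfers to any real-interpolation couple; yours is self-contained, treats $\phi\in L^2$ and $\phi\notin L^2$ uniformly with no case split, avoids the delicate implicit selection of $t_N$, and yields explicit constants in terms of $\|\phi\|_{L^{p_\theta}}$.
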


To prove the proposition we need the following lemma:

\begin{lem} {\rm (See e.g. \cite{HTT, Triebel})}  \label{interpolation}
Let $1\le q_1<q_2 <\infty$ and let
\begin{equation*}
    \frac{1}{q}=\frac{1-\theta}{q_1}
    +\frac{\theta}{q_2}
\end{equation*}
for some $\theta \in (0,1)$.  Then
$L^q \subset L^{q_1}+L^{q_2}$ and there 
are sequences $(f^t_1)_{t>0} \subset  L^{q_1}$ and $(f^t_2)_{t>0} \subset L^{q_2}$ such that $f=f^t_1+f^t_2$ 
    \begin{equation*}
        ct^{-\theta} \max \left(\|f_1^t\|_{L^{q_1}} ,
        t\|f_2^t\|_{L^{q_1}} 
        \right) \le \|f\|_{L^q}
    \end{equation*}
    for any $t>0$.
\end{lem}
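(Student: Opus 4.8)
The plan is to prove this by the classical distribution-function truncation, which realizes $L^q$ inside the real-interpolation space $(L^{q_1},L^{q_2})_{\theta,\infty}$ through an explicit splitting; the displayed estimate is precisely the bound $K(t,f)\le t^{\theta}\|f\|_{L^q}$ for the Peetre $K$-functional. (Since $f_2^t$ is constructed in $L^{q_2}$, the second norm in the statement is to be read as $\|f_2^t\|_{L^{q_2}}$, not $\|f_2^t\|_{L^{q_1}}$.) First I would dispose of the trivial case $f\equiv 0$, and then, for each fixed $t>0$, introduce a threshold $\lambda=\lambda(t)>0$ and set
\begin{equation*}
f_1^t\triangleq f\,\chi_{\{|f|>\lambda\}},\qquad f_2^t\triangleq f\,\chi_{\{|f|\le\lambda\}},
\end{equation*}
so that $f=f_1^t+f_2^t$ pointwise and the two pieces have disjoint supports.

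The two elementary estimates come from comparing powers of $|f|$ against the cutoff level. On $\{|f|>\lambda\}$ one has $q_1<q$, hence $|f|^{q_1}\le\lambda^{q_1-q}|f|^q$, and integrating yields
\begin{equation*}
\|f_1^t\|_{L^{q_1}}\le \lambda^{1-\frac{q}{q_1}}\|f\|_{L^q}^{\frac{q}{q_1}}.
\end{equation*}
Dually, on $\{|f|\le\lambda\}$ one has $q_2>q$, hence $|f|^{q_2}\le\lambda^{q_2-q}|f|^q$, and integrating yields
\begin{equation*}
\|f_2^t\|_{L^{q_2}}\le \lambda^{1-\frac{q}{q_2}}\|f\|_{L^q}^{\frac{q}{q_2}}.
\end{equation*}
Both right-hand sides are finite (the measures of $\{|f|>\lambda\}$ and the truncated integrals are controlled by Chebyshev), so the two pieces lie in the asserted spaces; this already gives $L^q\subset L^{q_1}+L^{q_2}$.

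It then remains to choose $\lambda$ so that $\|f_1^t\|_{L^{q_1}}$ and $t\|f_2^t\|_{L^{q_2}}$ are both controlled by $t^{\theta}\|f\|_{L^q}$. The natural choice is $\lambda=t^{\sigma}\|f\|_{L^q}$ with $\sigma=\theta/(1-\frac{q}{q_1})$; substituting into the first estimate collapses it exactly to $\|f_1^t\|_{L^{q_1}}\le t^{\theta}\|f\|_{L^q}$. For the second piece I would substitute the same $\lambda$ and invoke the defining relation $\frac1q=\frac{1-\theta}{q_1}+\frac{\theta}{q_2}$ to verify the exponent identity $\sigma(1-\frac{q}{q_2})=\theta-1$, which reduces the second estimate to $\|f_2^t\|_{L^{q_2}}\le t^{\theta-1}\|f\|_{L^q}$, i.e. $t\|f_2^t\|_{L^{q_2}}\le t^{\theta}\|f\|_{L^q}$. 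Taking the maximum gives $t^{-\theta}\max\!\left(\|f_1^t\|_{L^{q_1}},\,t\|f_2^t\|_{L^{q_2}}\right)\le\|f\|_{L^q}$, with constant $c=1$.

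The only point requiring genuine care — and the step I would treat as the main (though routine) obstacle — is the bookkeeping that makes a single value of $\sigma$ serve both pieces. Writing $u=q/q_1>1$ and $v=q/q_2<1$, the interpolation relation reads $(1-\theta)u+\theta v=1$, and one checks the algebraic identity $(1-u)(1-\theta)+\theta(1-v)=1-\bigl[(1-\theta)u+\theta v\bigr]=0$. This is precisely the consistency condition $\frac{1-u}{1-v}=\frac{\theta}{\theta-1}$ guaranteeing that the same $\sigma=\theta/(1-u)$ also satisfies $\sigma(1-v)=\theta-1$. Once this identity is in place the two bounds hold simultaneously for every $t>0$ and the proof is complete.
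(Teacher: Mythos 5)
Your proof is correct, and it is the standard argument: the paper itself gives no proof of this lemma (it only cites \cite{HTT, Triebel}), and the proof in those references is exactly your distribution-level truncation $f=f\chi_{\{|f|>\lambda\}}+f\chi_{\{|f|\le\lambda\}}$ with $\lambda=t^{\sigma}\|f\|_{L^q}$, i.e.\ the bound $K(t,f)\lesssim t^{\theta}\|f\|_{L^q}$ for the $K$-functional; your exponent bookkeeping $\sigma(1-\tfrac{q}{q_1})=\theta$, $\sigma(1-\tfrac{q}{q_2})=\theta-1$ checks out against $(1-\theta)\tfrac{q}{q_1}+\theta\tfrac{q}{q_2}=1$. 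You are also right that the second norm in the paper's statement is a typo and should read $\|f_2^t\|_{L^{q_2}}$, as that is what the splitting produces and what the application in Proposition \ref{LPinclude} actually uses.
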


\noindent\textit{Proof of Proposition \ref{LPinclude}}.\quad
It is clear that $L^2 \subset D_{p_0,\frac{1-\theta}{\theta}}$.  Thus, we let $\phi \in L^p \setminus L^2$ and we show that $\phi \in D_{p_0,\frac{1-\theta}{\theta}}$.  By Lemma \ref{interpolation} there are $(\phi_t^{(2)})_{t>0} \subset L^2$ and 
$(\phi_t^{(p_0)} )_{t>0} \subset L^{p_0}$ such that $\phi=\phi_t^{(p_0)}+\phi_t^{(2)}$ and
\begin{equation}
    ct^{-\theta} \max \left( 
    \|\phi_t^{(p_0)} \|_{L^{p_0}} , t\|\phi_t^{(2)}\|_{L^2} \right) \le 
    \|\phi \|_{L^p}.\label{interpolationineq}
\end{equation}
Note that $\| \phi_t^{(2)} \|_{L^2} \to \infty$ as $t\to \infty$ since $\phi \notin L^2$.  Thus for $N>0$ there exists $t_N>0$ such that
$\|\phi_{t_N}^{(2)} \|_{L^2} =N^{\frac{1-\theta}{\theta}}$.  Now we set
\begin{equation*}
    \varphi_N \triangleq \phi_{t_N}^{(2)},\quad
    \psi_N \triangleq \phi_{t_N}^{(p_0)}.
\end{equation*}
    Then by (\ref{interpolationineq}) we have
    \begin{equation*}
        t_N^{1-\theta} \le c^{-1} \|\phi\|_{L^p} N^{-\frac{1-\theta}{\theta}},\qquad 
        \|\psi_N \|_{L^{p_0}} \le c^{-1} t_N^{\theta} \|\phi\|_{L^p}.
    \end{equation*}
    These estimates yield
    \begin{equation*}
        \| \psi_N \|_{L^{p_0}} 
        \le (c^{-1} \|\phi \|_{L^p} )^{\frac{1}{1-\theta}} N^{-1}.
    \end{equation*}
    This implies $\phi \in D_{p_0,\A}$.
\qed

Now we conclude the proof of the global existence part.  Let $\epsilon>0$ be sufficiently small.  We put $p_0=p_0(\epsilon) \triangleq 3-\epsilon$ and 
for $\theta \in (0,1)$ we define $p_{\theta}
=p_{\theta}(\epsilon)$ by
\begin{equation*}
    \frac{1}{p_{\theta}} \triangleq\frac{1-\theta}{p_0} +\frac{\theta}{2}.
\end{equation*}
Then, by Propositon \ref{LPinclude} we have
\begin{equation*}
    L^{p_{\theta}}  \subset D_{p_0,\frac{1-\theta}{\theta}}.
\end{equation*}
Thus, by Theorem \ref{keyGWP} we see that, if
\begin{equation}
    \frac{1-\theta}{\theta} <\frac{p_0}{2(2p_0-1)},
\end{equation}
there is a global solution of (\ref{NLS}) for any
$\phi \in L^{p_{\theta}}$.  Now, we define $\theta_0=\theta_0(\epsilon)$ by
\begin{equation}
    \frac{1-\theta_0}{\theta_0} =\frac{p_0}{2(2p_0-1)}.
\end{equation}
Then, the sufficient condition for the global existence for $\phi \in L^p$ is rephrased as 
$2<p<p_{\theta_0}$.  Now, a computation shows that 
\begin{equation*}
    p_{\theta_0} =\frac{13-5\epsilon}{6-2\epsilon}.
    \end{equation*}
This implies a global solution $u$ such that
\begin{equation}
    u|_{[0,T]} \in Y_2^2(T)+ Y^{p_0}_{p_0'}(T),\quad \forall T>0 \label{Yregularity} 
    \end{equation}
    exists for any $\phi \in L^p$ if $2\le p <13/6$, since in this case we may have $p<p_{\theta_0}(\epsilon)$ for a sufficiently small $\epsilon>0$. (\ref{Strregularity}) is an immediate consequence of (\ref{Yregularity}) and Corollary \ref{Strinclusion}, since $p_0=3-\epsilon$ and $r$ satisfy (\ref{StrregularityScaling}) if $3 \le r \le 6$.

It remains to prove the persistence property of
the twisted variable $v(t)=U(-t) u(t)$.  We exploit generalized Strichartz estimates for $L^p$-data for $p<2$.

\begin{prop}{\rm (\cite[Theorem 3.2]{Kato})}\label{Lpstrprop}
Let $1<\rho\le 2$ and let $2\le \gamma,\sigma < \infty$ be such that $1/\gamma+1/\sigma<1/2$ and
\begin{equation*}
    \frac{2}{\gamma}+\frac{1}{\sigma}=\frac{1}{\rho}.
\end{equation*}
Then
\begin{equation}
    \|U(t) f\|_{L^{\gamma}(\R ;L^{\sigma})} \le 
    C\| f \|_{L^{\rho}}.\label{Lpstr}
\end{equation}
\end{prop}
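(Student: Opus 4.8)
The plan is to deduce the estimate \eqref{Lpstr} from the Fourier-side generalized Strichartz estimate \eqref{ofdStr} of Lemma \ref{odGFS} by means of the Hausdorff--Young inequality. Since $1<\rho\le 2$, Hausdorff--Young gives $\|\hat f\|_{L^{\rho'}}\le C\|f\|_{L^{\rho}}$, so it suffices to prove the intermediate bound
\begin{equation*}
\|U(t)f\|_{L^{\gamma}(\R;L^{\sigma})}\le C\|\hat f\|_{L^{\rho'}}.
\end{equation*}
Setting $p=\rho'$, and using $\gamma$ in place of $q$ and $\sigma$ in place of $r$, the scaling identity $\frac{2}{\gamma}+\frac{1}{\sigma}=\frac{1}{\rho}=\frac{1}{p'}$ is exactly $\frac{2}{Q_p(\sigma)}+\frac{1}{\sigma}=\frac{1}{p'}$, i.e. $\gamma=Q_p(\sigma)$; and the hypothesis $\frac{1}{\gamma}+\frac{1}{\sigma}<\frac{1}{2}$ is precisely $\frac{1}{Q_p(\sigma)}<\frac{1}{2}-\frac{1}{\sigma}$. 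Hence, provided $\rho>4/3$ (so that $p=\rho'\in[2,4)$ and Lemma \ref{odGFS} is available) and $\gamma>4$ (so that $\frac{1}{Q_p(\sigma)}<\frac14$), the estimate \eqref{ofdStr} applies directly and yields \eqref{Lpstr}.

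First I would record the elementary verification that, in this subrange, the exponent conditions of Lemma \ref{odGFS} are implied by those of Proposition \ref{Lpstrprop}: this is a routine matching of the two scaling relations and of the two admissibility inequalities, and it involves no analysis beyond Hausdorff--Young.

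To cover the full stated range $1<\rho\le 2$ (in particular the regime $\gamma\le 4$, which is not reached through Lemma \ref{odGFS}), I would instead view \eqref{Lpstr} as a mixed-norm adjoint restriction, or extension, estimate for the parabola $\{(\xi,\xi^2)\}\subset\R^2$: writing $U(t)f$ as the Fourier extension $\mathcal{E}\hat f$, the bound to prove becomes $\|\mathcal{E}g\|_{L^{\gamma}(\R;L^{\sigma})}\le C\|g\|_{L^{\rho'}}$ with $g=\hat f$. The core analytic input is the one-dimensional dispersive decay $\|U(t)f\|_{L^{\infty}}\le C|t|^{-1/2}\|f\|_{L^1}$ which, interpolated with the $L^2$-conservation $\|U(t)f\|_{L^2}=\|f\|_{L^2}$, gives $\|U(t)f\|_{L^{\sigma}}\le C|t|^{-\frac{1}{2}(1-\frac{2}{\sigma})}\|f\|_{L^{\sigma'}}$ for $2\le\sigma\le\infty$. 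The general off-diagonal estimate then follows from this decay by Stein's complex interpolation applied to a suitable analytic family of operators (the Stein--Tomas/Fefferman--Stein scheme), the diagonal case $\gamma=\sigma$ being the estimate of \cite{Fefferman} noted after Lemma \ref{odGFS}.

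The main obstacle is that, because $\rho<2$, one cannot close the argument by the usual $TT^{\ast}$ duality. For the map $Tf=U(t)f$ the composition $TT^{\ast}$ acts only on the space--time side, namely $TT^{\ast}F(t)=\int U(t-s)F(s)\,ds$, and through the dispersive decay and the Hardy--Littlewood--Sobolev inequality it produces exactly the $L^2$-based relation $\frac{2}{\gamma}+\frac{1}{\sigma}=\frac{1}{2}$; here, by contrast, $\frac{2}{\gamma}+\frac{1}{\sigma}=\frac{1}{\rho}>\frac{1}{2}$, so no such symmetrization is available and one must genuinely exploit the curvature of the parabola rather than mere $L^2$-orthogonality. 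The strict inequality $\frac{1}{\gamma}+\frac{1}{\sigma}<\frac{1}{2}$ in the hypothesis is precisely the condition that keeps $(\gamma,\sigma)$ inside the range where the extension estimate holds, so that the borderline $\frac{1}{\gamma}+\frac{1}{\sigma}=\frac{1}{2}$ must be excluded; the careful treatment of this borderline, together with the verification that the interpolation endpoints cover every admissible triple $(\gamma,\sigma,\rho)$, is where the real work lies.
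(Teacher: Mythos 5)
You should first be aware that the paper does not prove Proposition \ref{Lpstrprop} at all: it is quoted from Kato \cite[Theorem 3.2]{Kato}, so there is no internal argument to compare against. Your first reduction, however, is correct and checks out in detail: setting $p=\rho'$, the scaling relation $2/\gamma+1/\sigma=1/\rho=1/p'$ says exactly $\gamma=Q_p(\sigma)$, and under your extra assumptions $\gamma>4$ and $\rho>4/3$ the hypotheses (\ref{conditionodGFS}) of Lemma \ref{odGFS} hold (namely $0<1/\gamma<\min(1/4,\,1/2-1/\sigma)$ and $p=\rho'\in[2,4)$), so (\ref{ofdStr}) combined with Hausdorff--Young, $\|\hat f\|_{L^{\rho'}}\le \|f\|_{L^{\rho}}$ for $\rho\le 2$, yields (\ref{Lpstr}) on that subrange. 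It is worth noting that this subrange covers the only application of the proposition in the paper: in the persistence argument one takes $\gamma=\frac{8p}{3p-4}$, $\sigma=4$, $\rho=p'$ with $2\le p<4$, for which $\gamma>4$ and $\rho'\in[2,4)$, so your elementary derivation would make that step self-contained modulo Lemma \ref{odGFS}.

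As a proof of the proposition as stated (all $1<\rho\le 2$ and all $\gamma$ with $1/\gamma+1/\sigma<1/2$), though, your second part has a genuine gap: it is a program, not an argument. You correctly diagnose that $TT^{\ast}$ symmetrization only produces the $L^2$-admissible line $2/\gamma+1/\sigma=1/2$, but the ensuing claim that the off-diagonal estimate ``follows from the dispersive decay by Stein's complex interpolation'' founders on the same point: decay plus complex interpolation of an analytic family is precisely the Stein--Tomas scheme for $L^2$ densities, and by itself it cannot reach $\rho<2$. The indispensable input is a restriction/extension theorem for $L^{\rho}$-densities on the parabola (in one dimension this is the Fefferman--Zygmund--Carleson--Sj\"olin circle of results, which is also what \cite{Fefferman} and Lemma \ref{odGFS} encode), and the remaining work --- interpolating that diagonal estimate against Strichartz in mixed norms and verifying that every admissible triple $(\gamma,\sigma,\rho)$, in particular those with $\gamma\le 4$ or $\rho\le 4/3$, is attained under the strict constraint $1/\gamma+1/\sigma<1/2$ --- is exactly the content of Kato's theorem, which you explicitly defer (``where the real work lies'') rather than carry out. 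So: your reduction is correct and sufficient for this paper's needs, but it does not prove the full statement; for the remaining range, citing \cite{Kato}, as the paper does, remains necessary.
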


Here we use the following equivalent form of
this estimate:
\begin{cor}
Let $J\subset \R$ be an interval.  Suppose that $\gamma,\sigma,\rho$ satisfy the 
assumptions of Proposition \ref{Lpstrprop}.  Then
\begin{equation}
    \left\| \int_I U(-s) F(s) ds\right\|_{L^{\rho'}}
    \le C \|F\|_{L^{\gamma'}(J ;L^{\sigma'})}
\end{equation}
for all intervals $I\subset J$, where $C$ is independent of $J$.
\end{cor}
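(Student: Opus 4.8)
The plan is to obtain the stated inequality as the adjoint form of the estimate \eqref{Lpstr}. First I would reinterpret Proposition \ref{Lpstrprop} as the assertion that the linear operator
\begin{equation*}
T:f\longmapsto \bigl(t\mapsto U(t)f\bigr)
\end{equation*}
is bounded from $L^{\rho}(\R)$ into $L^{\gamma}(\R ;L^{\sigma})$ with operator norm at most the constant $C$ appearing in \eqref{Lpstr}. Under the hypotheses of Proposition \ref{Lpstrprop} we have $1<\rho\le 2$ and $2\le \gamma,\sigma<\infty$, so all of $\rho',\gamma',\sigma'$ are finite and strictly larger than $1$; in particular the mixed-norm space $L^{\gamma}(\R ;L^{\sigma})$ is reflexive and its dual is canonically $L^{\gamma'}(\R ;L^{\sigma'})$, while $(L^{\rho})^{*}=L^{\rho'}$.

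Next I would compute the adjoint $T^{*}$ explicitly by means of the $L^{2}$ pairing. For $f\in L^{\rho}\cap L^{2}$ and a sufficiently nice $F$ (say continuous and compactly supported in $t$ with values in $L^{2}\cap L^{\sigma'}$), the unitarity of $U(t)$ on $L^{2}$ together with $U(t)^{*}=U(-t)$ gives
\begin{equation*}
\langle Tf,F\rangle=\int_{\R}\langle U(t)f,F(t)\rangle_{L^{2}}\,dt
=\int_{\R}\langle f,U(-t)F(t)\rangle_{L^{2}}\,dt
=\Bigl\langle f,\int_{\R}U(-t)F(t)\,dt\Bigr\rangle_{L^{2}},
\end{equation*}
so that $T^{*}F=\int_{\R}U(-t)F(t)\,dt$. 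The abstract fact that the adjoint of a bounded operator has the same norm then yields
\begin{equation*}
\Bigl\|\int_{\R}U(-t)F(t)\,dt\Bigr\|_{L^{\rho'}}\le C\,\|F\|_{L^{\gamma'}(\R ;L^{\sigma'})},
\end{equation*}
first for the nice $F$ above and then, by density and the $L^{2}$-isometry property of $U(-t)$, for all $F\in L^{\gamma'}(\R ;L^{\sigma'})$.

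Finally, to deduce the version over an arbitrary subinterval $I\subset J$ I would apply the global inequality to $\mathbf 1_{I}F$, the restriction of $F$ to $I$ extended by $0$ outside $I$. Since $\int_{I}U(-s)F(s)\,ds=\int_{\R}U(-s)(\mathbf 1_{I}F)(s)\,ds$ and $\|\mathbf 1_{I}F\|_{L^{\gamma'}(\R ;L^{\sigma'})}=\|F\|_{L^{\gamma'}(I ;L^{\sigma'})}\le \|F\|_{L^{\gamma'}(J ;L^{\sigma'})}$, the claim follows with a constant equal to the constant $C$ of Proposition \ref{Lpstrprop}, and hence independent of both $I$ and $J$. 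The only genuinely delicate point is the identification of the dual of $L^{\gamma}(\R ;L^{\sigma})$ with $L^{\gamma'}(\R ;L^{\sigma'})$ and the rigorous justification of the adjoint computation by a density argument; both are routine here precisely because every exponent lies strictly between $1$ and $\infty$.
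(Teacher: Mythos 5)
Your proposal is correct and follows essentially the same route as the paper: the paper itself notes the corollary "follows immediately from the standard duality argument" and then carries out by hand exactly what you package abstractly, namely pairing $\int_I U(-s)F(s)\,ds$ against a normalized $f\in L^{\rho}$, moving $U(-s)$ onto $f$ to produce $U(s)f$, and concluding with H\"older's inequality and the estimate \eqref{Lpstr}. The only cosmetic differences are that you invoke $\|T^{*}\|=\|T\|$ together with the identification of the dual of the mixed-norm space (and handle the interval restriction via $\mathbf{1}_{I}F$), whereas the paper computes the supremum pairing explicitly through the Fourier representation of $U(-s)$ and restricts to $I$ directly inside that computation.
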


\begin{proof}
Although the corollary follows immediately
from the standard duality argument, we prove it
for the sake of completeness.  Let $I \subset J$. We have
\begin{align*}
    \left\|\int_I U(-s) F(s)  ds\right\|_{L^{\rho'}} &=
    \sup_{f\in L^{\rho},\,\|f\|_{L^{\rho}}=1} 
    \int_{\R} \int_I [U(-s) F(s)](x) ds \overline{f(x)} dx \\
    &=
    \sup_{f\in L^{\rho},\,\|f\|_{L^{\rho}}=1} 
    \int_{\R} \int_I \int_{\R} 
    \int_{\R} e^{is\xi^2 +ix\xi-iy\xi} F(s,y)\overline{f(x)}  dy  d\xi ds  dx \\
    &=
    \sup_{f\in L^{\rho},\,\|f\|_{L^{\rho}}=1} 
    \int_{I}  \int_{\R} F(s,y) 
    \overline{ \int_{\R} e^{iy\xi-is\xi^2} \int_{\R}
    e^{-ix\xi} f(x)  dx d\xi} dyds \\
    &= \sup_{f\in L^{\rho},\,\|f\|_{L^{\rho}}=1}
    \int_{I}  \int_{\R} F(s,y) \overline{ [U(s)f](y)} dy.
\end{align*}
Now, by (\ref{Lpstr}) and H\"older's inequailty, the right hand side is smaller than
\begin{align*}
\sup_{f\in L^{\rho},\,\|f\|_{L^{\rho}}=1}
\|F \|_{L^{\gamma'}(I ; L^{\sigma'})} \| U(s)f \|_{L^{\gamma} (I ;L^{\sigma})}  &\le C 
\sup_{f\in L^{\rho},\,\|f\|_{L^{\rho}}=1}
\|F \|_{L^{\gamma'}(I ; L^{\sigma'})} \|f \|_{L^{\rho}} \\
&\le C\|F \|_{L^{\gamma'}(J; L^{\sigma'})} .
\end{align*}

\end{proof}

Now we are ready to prove the persistence property.  We start from the integral equation
equivalent to (\ref{NLS}).  Clearly, we get
\begin{equation*}
    U(-t)u(t)=\phi +i\int^t_0 U(-s) |u(s)|^2 u(s) ds.
\end{equation*}
    Then it is enough to estimate the $L^p$ norm of Duhamel part in the right hand side.  
Let $T>0$.  Observe first that we have

$u \in L^{Q_{3-\epsilon}(4)}([0,T] ;L^4)\subset L^{Q_{3}(4)}([0,T] ;L^4) 
=L^{\frac{24}{5}}([0,T] ; L^4)$.

We apply Proposition \ref{Lpstrprop} with $\gamma=\frac{8p}{3p-4},\,\sigma=4,\,\rho=p'$ which satisfies the assumption if $p<4$.  For $t \in [0,T]$ we obtain
\begin{align}
\left\|\int^t_0 U(-s) |u(s)|^2 u(s) ds   \right\|_{L^p} &\le C \left\| |u|^2 u  \right\|_{ L^{\frac{8p}{5p+4}  } ( [0,T] ; L^{\frac{4}{3}})} \\
& \le C\|u\|^3_{ L^{\frac{24p}{5p+4}  } ( [0,T] ; L^4)} \\
&\le CT^{\frac{1}{6p}} \| u\|^3_{L^{\frac{24}{5}}( [0,T] ; L^4)}.
\end{align}
Thus we see that $U(-t)u(t) \in L^p$ for any $t \in T$.  Since $T$ can be taken arbitrarily large, we have $U(-t)u(t) \in L^p$ for every $t>0$.  
\if0

\begin{align*}
   \|\varphi^k\|_{L^2} \ge C_0^{-1} N^{\A}- 
   \sum_{j=1}^{k} 
    \left\| \int^{j\delta_N}_{(j-1)\delta_N} 
    U(t-s) G(v^{(j-1)},w^{(j-1)}) ds\right\|_{L^2} .
\end{align*}

    To conclude the proof it remains to show that the fixed point argument 
    as in the Step 1 works for the integral equation
    \begin{equation*}
        w^{(k)}(t)=U
    \end{equation*}

Note that
\begin{equation*}
    \varphi^k=v^{(k-1)}(k\delta_N)+i\int^{k\delta_N}_{(k-1)\delta_N}
    U(t-s) G(v^{(k-1)},w^{(k-1)}) ds,
\end{equation*}
namely, the Duhamel part of the previous solution in the $(k-1)$the step is incorporated into the initial data.
As in the previous step we split (\ref{}) into the two Cauchy problems
\begin{equation}
    iv_t^{(1)}+v_{xx}^{(1)}+|v^{(1)}|^2v^{(1)}=0,\quad v^{(1)}|_{\delta_N} =\varphi^1_N
\end{equation}
and
\begin{equation}
    iw_t^{(1)}+w_{xx}^{(1)}+G(v^{(1)},w^{(1)})=0,\quad w^{(1)}|_{\delta_N} =\psi^1_N. \label{secondIVPd2}
\end{equation}

Thus, we expect that the argument in the first step can be applied.  
To verify this, it is enough to check that the iteration argument
of the proof of Proposition \ref{} works if the inequalities in Proposition \ref{} (i) are replaced by the ones in (ii) in the estimate of nonlinear terms there.  This will be done later.
\begin{align}
\|w(k\delta_N)\|_{L^2} &\le \|\mathscr{D}(V,V,W) \|_{}
\le (k\delta_N)^{\frac{1}{2}-\frac{1}{p}} \delta_N^{\frac{1}{4}+\frac{1}{2p}}
N^{2\A} N^{-1}\\
&\le k^{\frac{1}{2}-\frac{1}{p}} (N^{-4\A})^{\frac{3}{4}-\frac{1}{2p}}
N^{2\A-1} \\
&= k^{\frac{1}{2}-\frac{1}{p}}N^{-\A-1+\frac{2\A}{p_0}}
    \end{align}
Noting that $\sum_{k=1}^n k^s \sim n^{s+1}$, we have after $n$ s
\begin{equation*}
    n^{\frac{3}{2}-\frac{1}{p_0}} N^{-\A-1+\frac{2\A}{p_0}} \le 2C_0 N^{\A}
\end{equation*}
Thus
\begin{equation*}
    n\le N^{\frac{2p_0}{3p_0-2} (2\A+1-\frac{2\A}{p_0})}
\end{equation*}
Consequently, the solution can be extended to a time
\begin{equation*}
    T_N\triangleq n\delta_N  \sim N^{-4\A} N^{\frac{2p_0}{3p_0-2} (2\A+1-\frac{2\A}{p_0})}
    =N^{\frac{-8p_0\A+4\A+2p }{3p_0-2} }
\end{equation*}
Now, recall that the local solution was established and extended for each
$N>1$, so $T_N$ can be arbitrarily large by taking $N$ sufficiently large if
\begin{equation*}
    -8p_0\A+4\A+2p_0>0
\end{equation*}

\begin{lem}
    Let $k \le N^{\frac{2p_0}{3p_0-2} (2\A+1-\frac{2\A}{p_0})}$.

    Assume that there is a local solution 
    \begin{equation*}
        iu_t+u_{xx}+|u|^2u=0,\quad u|_{a}
    \end{equation*}
\end{lem}

\fi


\begin{thebibliography}{99}
\bibitem{Bourgain} J. Bourgain, {\em  Refinements of Strichartz's inequality and
 applications to 2D-NLS with critical
 nonlinearity}. Int. Math. Res. Not., No. 5, (1998) 253-283.
\bibitem{Brenner} P. Brenner, V. Thom\'ee and L.B. Wahlbin, {\em Besov Spaces and Applications to Difference Methods for Initial Value Problems}, Lecture Notes in Math. Springer 434.

\bibitem{Caz} T. Cazenave, {\em Semilinear  Schr\"odinger equations}, Courant Lect. 
Notes Math. {\bf 10}, New York Univ., Courant Inst. Math. Sci., New York, 2003.
\bibitem{CVV} T. Cazenave, L. Vega, and M.C.Vilela, {\em A note on the nonlinear Schr\"odinger equation in weak $L^p$ spaces}, Communications in contemporary Mathematics, Vol. 3, No.1 (2001),153--162.

\bibitem{NLSModu} L. Chaichenets, D. Hundertmark, P. Kunstmann, and
N. Pattakos, {\em On the existence of global solutions of the one-dimensional cubic NLS for initial data in teh modulation space $M_{p,q}(\R)
$}, J. Differential Equations {\bf 263} (2017) 4429--4441.

\bibitem{DSS} B. Dodson, A. Soffer, and T. Spencer, {\em Global well-posedness for the cubic nonlinear Schr\"odinger equation with initial data lying in $L^p$-based Sobolev spaces}, J. Math. Phys. {\bf 62} (2021), 071507.
\bibitem{Fefferman} C. Fefferman, {\em Inequalities for strongly singular convolution operators,} Acta math. 124 (1970), 9-36.


\bibitem{GrunrockKdV} A. Gr\"unrock, {\em An improved local well-posedness result for the modified KdV equation}, Int. Math. Res. Not,. No. 61, (2004), 3287-3308.
\bibitem{Grunrock} A. Gr\"unrock, {\em Bi- and trilinear Schr\"odinger estimates in one space dimension with applications to cubic NLS and DNLS  },
Int. Math. Res. Not., No. 41, (2005), 2525-2558.

\bibitem{HN} N. Hayashi and P. Naumkin, {\em Asymptotics for large time of solutions to the nonlinear Schr\"odinger and Hartree equations}, Amer. J. Math. {\bf 120} (1998), 369--389.
\bibitem{Hormander} L. H\"ormander, {\em Estimates for translation invariant operators in $L^p$ spaces,} Acta Math, {\bf 104} (1960), 141--164.


\bibitem{107indiana} R. Hyakuna, {\em Global solutions to the Hartree equation for large $L^{p}$-initial data}, Indiana Univ. Math. J. {\bf 68} (2019), 1149--1172.

\bibitem{107jfa} R. Hyakuna, {\em Local and global well-posedness, and $L^{p'}$-decay estimates for 1D nonlinear Schr\"odinger equations with Cauchy data in $L^p$}, J. Funct. Anal. {\bf 278}, No. 12, (2020), 108511.
\bibitem{107slow} R. Hyakuna, {\em Well-posedness for the 1D nonlinear Shcr\"odinger equatin in $L^p,p>2$}, Nonlinear Anal. T.M.A {\bf 226}, (2023), 113154.

\bibitem{HTT} R. Hyakuna, T. Tanaka, and M. Tsutsumi, {\em On the global well-posedness for the nonlinear Schr\"odinger equations with large initial data of inifinite $L^2$ norm}, Nonlinear Anal. {\bf 74} (2011) 1304--1319.

\bibitem{107T} R. Hyakuna and M. Tsutsumi, {\em On existence of global solutions of Schr\"odinger equations with subcritical nonlinearity for $\widehat{L^p}$-data}, Proc. Am, Math. Soc. {\bf 140} (2012), 3905--3920.

\bibitem{scjfa} R. Schippa, {\em On smoothing estimates in modulation spaces and the nonlinear Schr\"odinger
equation with slowly decaying initial data}, J. Funct. Anal. {\bf 282 (5)} (2022), 109352.


\bibitem{Kato} T.Kato, {\em An $L^{q,r}$-theory for nonlinear Schr\"odinger equations}, Adv. Stud. Pure Math., vol. 23, Math. Soc. Japan, Tokyo, 1994, 223-238.

\bibitem{Triebel} H. Triebel, {\em Theory of Function Spaces,} Birkh\"auser, Basel 1983.

\bibitem{VV} A. Vargas and L. Vega, {\em Global wellposedness
	for 1D nonlinear Schr\"odinger equation for data with an
	infinite $L^2$ norm}, J. Math. pures Appl. {\bf 80}, (2001),
	1029-1044.
\bibitem{yT} Y. Tsutsumi, {\em $L^2$ -solutions for nonlinear
	Schr\"odinger equations and nonlinear groups}, Funkcial Ekvac., 
        {\bf 30} (1987), 115-125.


\bibitem{Zhou} Y. Zhou, {\em  Cauchy problem of nonlinear Schr\"odinger equation with initial data in Sobolev spce $W^{s,p}$ for $p<2$                     }, 
Trans. Amer. Math. Soc., {\bf 362} (2010), 4683-4694.
\end{thebibliography}
\end{document}